\documentclass[10pt,reqno]{amsart} 

\usepackage{amssymb,latexsym,amsmath,amsthm,amsfonts}

\setlength{\oddsidemargin}{0.5in} 
\setlength{\evensidemargin}{0.5in} 
 
\setlength{\textwidth}{5.5in} 
  
\begin{document}

\title{Functoriality for General Spin Groups}

\author[Mahdi Asgari]{Mahdi Asgari} 
\address{Department of Mathematics \\ 
Oklahoma State University \\ 
Stillwater, OK 74078--1058 \\
USA} 
\email{asgari@math.okstate.edu} 
	 
\author[Freydoon Shahidi]{Freydoon Shahidi} 
\address{Mathematics Department \\  
Purdue University \\ 
West Lafayette, IN 47907 \\ 
USA} 
\email{shahidi@math.purdue.edu} 

\begin{abstract}
We establish the functorial transfer of generic, automorphic representations from the 
quasi-split general spin groups to general linear groups over arbitrary number fields, 
completing an earlier project. Our results are definitive and, in particular, we determine 
the image of this transfer completely and give a number of applications. 
\end{abstract}

\maketitle

\numberwithin{equation}{section}
\newtheorem{thm}[equation]{Theorem}
\newtheorem{cor}[equation]{Corollary}
\newtheorem{lem}[equation]{Lemma}
\newtheorem{prop}[equation]{Proposition}
\newtheorem{con}[equation]{Conjecture}
\newtheorem{ass}[equation]{Assumption}
\newtheorem{defi}[equation]{Definition}
\newtheorem{exer}[equation]{Exercise}

\theoremstyle{remark}
\newtheorem{rem}[equation]{Remark}
\newtheorem{exam}[equation]{Example}

\newcommand{\A}{\mathbb A}
\newcommand{\GL}{\mathrm{GL}}
\newcommand{\GO}{\mathrm{GO}}
\newcommand{\GSO}{\mathrm{GSO}}
\newcommand{\GSp}{\mathrm{GSp}}
\newcommand{\GSpin}{\mathrm{GSpin}}
\newcommand{\SO}{\mathrm{SO}}
\newcommand{\Spin}{\mathrm{Spin}}

\newcommand{\C}{\mathbb C} 
\newcommand{\Q}{\mathbb Q} 
\newcommand{\Z}{\mathbb Z} 
\newcommand{\R}{\mathbb R} 

\newcommand{\B}{{\bf B}} 
\newcommand{\G}{{\bf G}} 
\renewcommand{\H}{{\bf H}} 
\newcommand{\M}{{\bf M}} 
\newcommand{\N}{{\bf N}} 
\renewcommand{\P}{{\bf P}} 
\newcommand{\T}{{\bf T}} 
\newcommand{\U}{{\bf U}} 
\newcommand{\X}{{\bf X}} 
\newcommand{\Y}{{\bf Y}} 
\newcommand{\ZZ}{{\bf Z}} 
  
\newcommand{\sym}{{\rm Sym}} 

\renewcommand{\i}{\mathfrak i}

\newcommand{\w}[1]{\ensuremath{\widetilde{#1}}} 
\newcommand{\h}[1]{\ensuremath{\widehat{#1}}} 

\section{Introduction}\label{intro} 

In this article we complete a project we started in \cite{duke} by establishing the 
full transfer of generic, automorphic representations from the quasi-split general 
spin groups to the general linear group. In particular, we completely determine the 
image of this transfer. 

Our first main result is to establish the transfer of globally generic, automorphic representations 
from the quasi-split non-split even general spin group, $\GSpin^{*}(2n,\A_{k})$, to 
$\GL(2n,\A_{k})$ (cf. Theorem \ref{q-split-weak}). Here, $k$ denotes an 
arbitrary number field. We proved the analogous result for 
the split groups $\GSpin(2n)$ and $\GSpin(2n+1)$ in \cite{duke}, but were not able 
to prove the quasi-split case then since the ``stability of root numbers'' was not 
yet available for non-split groups.  

Our next main result is to prove that the transferred representation to $\GL(2n)$, 
from either an even or an odd general spin group, is actually an isobaric, automorphic 
representation (cf. Theorem \ref{image}). 

Our final result gives a complete description of the image of 
this transfer in terms of $L$-functions (cf. Theorem \ref{main}). This description 
is exactly what is expected from the theory of twisted endoscopy. 

The latter two results allow us to give a number of applications. 
As a first application, we are able to describe the local component of the 
transferred representation at the ramified places. In particular, we show that 
these local components are generic (cf. Proposition \ref{vinS}). 

Another application is to prove estimates toward the Ramanujan conjecture for 
the generic spectrum of the general spin groups. We do this by using the best estimates 
currently known for the general linear groups \cite{lrs}. In particular, our estimates show that 
if we know the Ramanujan conjecture for $\GL(m)$ for $m$ up to $2n$, then the 
Ramanujan conjecture for the generic spectrum of $\GSpin(2n+1)$ and $\GSpin(2n)$ 
follows. 

Yet another application of our main results is to give more information about H. Kim's 
exterior square transfer from $\GL(4)$ to $\GL(6)$ with the help of some 
recent work of J. Hundley and E. Sayag. We prove that a cuspidal 
representation $\Pi$ of $\GL(6)$ is in the image of Kim's transfer if and only if 
the (partial) twisted symmetric square $L$-function of $\Pi$ has a pole at 
$s=1$ (cf. Proposition \ref{ext2}). 

We now explain our results in more detail. 
Let $k$ be a number field and let $\A=\A_{k}$ denote its ring of ad\`eles. 
Let $\G$ be the split group $\GSpin(2n+1)$, $\GSpin(2n)$ or one of its quasi-split non-split 
forms $\GSpin^{*}(2n)$ associated with a quadratic extension $K/k$ (cf. Section \ref{pre-sec}). 
There is a natural embedding 
\begin{equation} 
\iota : {}^{L}\G \longrightarrow \GL(2n,\C) \times \Gamma_k
\end{equation} 
of the $L$-group of $\G$, as a group over $k$, into that of $\GL(2n)$ described in Section \ref{q-split-sec}. 
Let $\pi$ be a globally generic, (unitary) cuspidal, automorphic representation of $G=\G(\A)$. 
For almost all places $v$ of $k$ the local representation $\pi_{v}$ is parametrized by 
a homomorphism 
\begin{equation} 
\phi_{v} : W_{v} \longrightarrow {}^{L}\G_v, 
\end{equation} 
where $W_{v}$ is the local Weil group of $k_{v}$ and ${}^L\G_v$ is the $L$-group of $\G$ as a group 
over $k_v$. Langlands Functoriality then predicts that there is an automorphic representation 
$\Pi$ of $\GL(2n,\A)$ such that for almost all $v$, the local representation $\Pi_{v}$ 
is parametrized by $\iota \circ \phi_{v}$. 
We established this result for the split case in \cite{duke}. However, the quasi-split case had to 
wait because the local technical tools of ``stability of $\gamma$-factors'' (cf. Proposition \ref{stab}) 
and a result on local $L$-functions and normalized intertwining operators (Proposition \ref{nice}) 
were not available in the quasi-split non-split case. The local result is now available in our cases 
thanks to the thesis of Wook Kim \cite{w-kim} and the stability of $\gamma$-factors is available in great 
generality thanks to a recent work of Cogdell, Piatetski-Shapiro and Shahidi \cite{cpss-stab}. 

As in the split case, the method of proving the existence of an automorphic representation 
$\Pi$ is to use converse theorems. This requires knowledge of the analytic properties of the 
$L$-functions for $\GL(m) \times \GL(2n)$ for $m \le 2n-1$.  
The two local tools allow us to relate the $L$-functions for $\G \times \GL$ from the 
Langlands-Shahidi method to those required in the converse theorems in the following way. 
Due to the lack of the local Langlands correspondence in general, there is no natural choice 
for the local components of our candidate 
representation $\Pi$ at the finite number of exceptional places of $k$ where some of our 
data may be ramified. This means that we have to pick these local 
representations essentially arbitrarily. However, we show that the local $L$- and $\epsilon$-factors 
appearing will become independent of the representation, depending only on the central character, 
if we twist by a highly ramified character. Globally we can afford to twist our original representation 
by an id\`ele class character which is highly ramified at a finite number of places. With this 
technique we succeed in applying an appropriate version of the converse theorem. The conclusion 
so far is to have an automorphic representation $\Pi$ of $\GL(2n,\A)$ which is locally 
the transfer of $\pi$ associated with $\iota$ outside a finite number of places. Moreover, 
if $\omega=\omega_{\pi}$ is the central character of $\pi$, then $\omega_{\Pi} = \omega^{n} \mu$, 
where $\mu$ is a quadratic id\`ele class character, only nontrivial in the quasi-split non-split case. 

Next, we get more information about $\Pi$. In particular, we prove that $\Pi$ is indeed an 
isobaric, automorphic representation (cf. Theorem \ref{image} and its corollary). We refer to 
\cite{langlands} for the notion of isobaric representations. 
For this one needs to know some analytic properties of 
the Rankin-Selberg type $L$-functions $L(s, \pi \times \tau)$, where $\tau$ is a cuspidal 
representation of $\GL(m,\A)$ and $\pi$ is a generic representation of $\G(\A)$. In particular, 
one needs to know that the $L$-function for $\G \times \GL(m)$ for $m \le n$ is holomorphic 
for $\Re(s) > 1$ and to know under what conditions this $L$-function has a pole at $s=1$. 
When $\G$ is a special orthogonal or symplectic group, these results are known thanks to 
the works of Gelbart, Ginzburg, Piatetski-Shapiro, Rallis and Soudry studying the Rankin-Selberg type 
zeta integrals giving these $L$-functions. For a survey of the methods and results we refer to 
Soudry's survey article \cite{soudry}. In Section \ref{grs} we extend this method to the 
case of $\G \times \GL$, where $\G$ is a general spin group, closely following their method. 

Not only does the study of the Rankin-Selberg integral gives us the result on the analytic 
behavior of the $L$-functions, it also provides more information 
about the image of the transfer. In particular, we prove that $\Pi \cong \w{\Pi} \otimes \omega$. 
Here, $\w{\Pi}$ denotes the contragredient of $\Pi$. 
We are also able to prove that the transferred 
representation $\Pi$ is unique, it is an isobaric sum of pairwise inequivalent unitary, cuspidal, 
automorphic representations 
\begin{equation} \Pi = \Pi_{1} \boxplus \cdots \boxplus \Pi_{t}, \end{equation}
and each $\Pi_{i}$ satisfies the condition that its twisted symmetric square or twisted exterior square 
$L$-function has a pole at $s=1$, depending on whether we are transferring from even or odd 
general spin groups (cf. Theorem \ref{main}). 

The automorphic representations $\Pi$ of $\GL(2n,\A)$ which are transfers from representations 
$\pi$ of general spin groups satisfy 
\begin{equation} \label{omega}
\Pi \cong \w{\Pi} \otimes \omega, 
\end{equation}
as predicted by the theory of twisted endoscopy \cite{kottwitz-shelstad}. In fact, these representations 
comprise precisely the image of the transfer. While we prove half of this statement we note that 
the other half of this, i.e., the fact that any representation of $\GL(2n,\A)$ satisfying (\ref{omega}) 
is a transfer from a representation of a general spin group has now also been proved thanks to 
the work of J. Hundley and E. Sayag, extending the descent theory results of 
Ginzburg, Rallis, and Soudry from the case of classical groups ($\omega=1$) to our case. 

If a representation $\Pi$ of $\GL(2n,\A)$ satisfies (\ref{omega}), then 
\begin{equation} \label{L-pole}
L^{T}\left(s, \Pi \times (\Pi\otimes\omega^{-1})\right) = 
L^{T}(s,\Pi,\sym^{2}\otimes\omega^{-1})
L^{T}(s,\Pi,\wedge^{2}\otimes\omega^{-1}), 
\end{equation}
where $T$ is a sufficiently large finite set of places of $k$ and $L^{T}$ denotes the product 
over $v \not\in T$ of the local $L$-functions. 
The $L$-function on the left hand side of (\ref{L-pole}) has a pole at $s=1$, which implies 
that one, and only one, of the two $L$-functions on the right hand side of (\ref{L-pole}) 
has a pole at $s=1$. If the twisted exterior square $L$-function has a pole at $s=1$, 
then $\Pi$ is a transfer from an odd general spin group and if the twisted symmetric 
square $L$-function has a pole at $s=1$, then $\Pi$ is a transfer from an even general 
spin group (which may be split or quasi-split non-split). 

To tell the split and quasi-split non-split cases apart note that from (\ref{omega}) we have 
\begin{equation} 
\omega_{\Pi}^{2} = \omega^{2n}. 
\end{equation} 
In other words, $\mu = \omega_{\Pi} \omega^{-n}$ is a 
quadratic id\`ele class character. If $\mu$ is the trivial character, then $\Pi$ is the transfer of 
a generic representation of even split general spin group and if $\mu$ is a nontrivial quadratic 
character, then $\Pi$ is a transfer from a generic representation of a quasi-split group associated 
with the quadratic extension of $k$ determined by $\mu$ through class field theory. 

Our results here along with those of Hundley and Sayag \cite{hs-announce,hs-ppt-odd,hs-ppt-even} 
give a complete description of the 
image of the transfer for the generic representations of the general spin groups. It remains 
to study the transfers of non-generic, cuspidal, automorphic representations of the general spin 
groups, which our current method cannot handle. However, the image of the generic 
spectrum is conjecturally the full image of the tempered spectrum, generic or not, of the 
general spin groups since each tempered $L$-packet is expected to include a generic 
member. We refer to \cite{shahidi-ppt} for more details on this conjecture. We point out that 
Arthur's upcoming book \cite{arthur} would answer this question in the case of classical 
groups. However, his book does not cover the case of general spin groups. 

We can apply our results in this paper, along with those of Cogdell, Kim, Krishnamurthy, 
Piatetski-Shapiro, and Shahidi for the classical and unitary groups, to give some uniform 
results on reducibility of local induced representations of non-exceptional $p$-adic groups. 
We will address this question along with other local applications of generic functoriality 
in a forthcoming paper \cite{me-cogdell-shahidi}. 

We would like to thank J. Cogdell, J. Hundley, E. Sayag and D. Soudry for helpful discussions. 
Parts of this work was done while the authors were visiting The Erwin Schr\"odinger Institute 
for Mathematical Physics (ESI) and the Institute for Research in Fundamental Sciences (IPM). 
We would like to thank both institutes for their hospitality and great working environment. 
The first author was partially supported by NSA grant H-98230--09--1--0049 and an 
Alexander von Humboldt Fellowship. He also acknowledges partial travel support from 
Vaughn funds at Oklahoma State University. The second author was partially supported 
by NSF grant DMS-0700280.

\tableofcontents

\section{The Preliminaries} \label{pre-sec}

Let $k$ be a number field and let $\A=\A_k$ be the ring of ad\`eles of $k$. Let 
$n \ge 0$ be an integer. We consider the general spin groups. The group 
$\GSpin(2n+1)$ is a split connected reductive group of type $B_{n}$ defined 
over $k$ whose dual group is $\GSp(2n,\C)$. 
Similarly, the split connected reductive group $\GSpin(2n)$ over $k$ is of type $D_n$ 
and its dual is isomorphic to $\GSO(2n,\C)$, the connected component of the group $\GO(2n,\C)$. 
There are also quasi-split non-split groups $\GSpin^{*}(2n)$ in the even case. 
They are of type $^{2}D_{n}$ and correspond to quadratic extensions of $k$. 
A more precise description is given below. We also refer to \cite[\S 7 \& 1]{cpss-clay} 
for a review of the generalities about these groups. 

We fix a Borel subgroup $\B$ and a Cartan subgroup $\T \subset \B$. The associated 
based root datum to $(\B,\T)$ will be denoted by $(X,\Delta,X^{\vee},\Delta^{\vee})$ 
which we further explicate below. Our choice of the notation for the 
root data below is consistent with the Bourbaki notation \cite{bourbaki}.

\subsection{Structure of $\GSpin$ Groups}
We describe the odd and even $\GSpin$ groups by introducing 
a based root datum for each as in \cite[\S 7.4.1]{spr}. A more detailed description can also 
be found in \cite[\S 2]{duke}. We use these data as our tool to work with the groups in 
question due to the lack of a convenient matrix representation.

\subsubsection{The root datum of $\GSpin(2n+1)$}\label{G-odd} 
The root datum of $\GSpin(2n+1)$ is given by $(X,R,X^{\vee},R^{\vee})$, where 
$X$ and $X^{\vee}$ are $\mathbb Z$-modules generated by generators $e_{0},e_{1},\cdots,e_{n}$ 
and $e^{*}_{0},e^{*}_{1},\cdots,e^{*}_{n}$, respectively. The pairing 
\begin{equation}\label{pairing} 
\langle \ , \ \rangle : X \times X^{\vee} \longrightarrow \mathbb Z
\end{equation}
is the standard pairing. Moreover, the roots and coroots are given by 
\begin{eqnarray}
\label{R-odd}
R = R_{2n+1} &=& \left\{ \pm (e_{i} \pm e_{j}) \, : \, 1 \le i<j \le n \right\} \cup 
\left\{\pm e_{i} \, | \, 1\le i \le n \right\} \\
\label{R-odd-co}
R^{\vee} = R^{\vee}_{2n+1} &=& \left\{ \pm(e^{*}_{i} - e^{*}_{j}) \, : \, 1 \le i<j \le n \right\} \cup \\ 
&&\nonumber 
\left\{ \pm(e^{*}_{i} + e^{*}_{j} - e_0^*) \, : \, 1 \le i<j \le n \right\} \cup 
\left\{\pm (2e^{*}_{i} - e^{*}_{0}) \, | \, 1\le i \le n \right\} 
\end{eqnarray}
along with the bijection $R \longrightarrow R^{\vee}$ given by 
\begin{eqnarray}
\label{v-odd-1}
(\pm(e_{i} - e_{j}))^{\vee} &=&  \pm (e^{*}_{i} - e^{*}_{j}) \\ 
\label{v-odd-2}
(\pm(e_{i} + e_{j}))^{\vee} &=&  \pm (e^{*}_{i} + e^{*}_{j} - e^{*}_{0}) \\ 
\label{v-odd-3}
(\pm e_{i})^{\vee} &=& \pm (2e^{*}_{i} - e^{*}_{0}).  
\end{eqnarray} 
It is easy to verify that the conditions (RD 1) and (RD 2) of \cite[\S 7.4.1]{spr} hold. 
Moreover, we fix the following choice of simple roots and coroots: 
\begin{eqnarray}
\Delta &=& \left\{ e_1-e_2, e_2-e_3,\cdots,e_{n-1}-e_n, e_n \right\}, \\ 
\Delta^\vee &=& \left\{ e^*_1-e^*_2, e^*_2-e^*_3,\cdots,e^*_{n-1}-e^*_n, 2e^*_n-e^*_0 \right\}.
\end{eqnarray} 
This datum determines the group $\GSpin(2n+1)$ uniquely, equipped with a Borel subgroup 
containing a maximal torus. 

\subsubsection{The root datum of $\GSpin(2n)$}\label{G-even} 
Next, we give a similar description for the even case. 
The root datum of $\GSpin(2n)$ is given by $(X,R,X^{\vee},R^{\vee})$ where 
$X$ and $X^{\vee}$ and the pairing is as above and the roots and coroots are 
given by 
\begin{eqnarray}
\label{R-even}
R = R_{2n} &=& \left\{ \pm (e_{i} \pm e_{j}) \, : \, 1 \le i < j \le n \right\}  \\
\label{R-even-co}
R^{\vee} = R^{\vee}_{2n} &=& \left\{ \pm (e^{*}_{i} - e^{*}_{j}) \, : \, 1 \le i<j \le n \right\} \cup \\ 
\nonumber
&& \left\{ \pm (e^{*}_{i} + e^{*}_{j} - e^{*}_{0}) \, : \, 1 \le i<j \le n \right\} 
\end{eqnarray}
along with the bijection $R \longrightarrow R^{\vee}$ given by 
\begin{eqnarray}
\label{v-even-1}
(\pm(e_{i} - e_{j}))^{\vee} &=&  \pm (e^{*}_{i}-e^{*}_{j}) \\ 
\label{v-even-2}
(\pm (e_{i} + e_{j}))^{\vee} &=& \pm (e^{*}_{i} + e^{*}_{j} - e^{*}_{0}).  
\end{eqnarray} 
It is easy again to verify that the conditions (RD 1) and (RD 2) of \cite[\S 7.4.1]{spr} hold. 
Similar to the odd case we fix the following choice of simple roots and coroots: 
\begin{eqnarray}
\Delta &=& \left\{ e_1-e_2, e_2-e_3,\cdots,e_{n-1}-e_n, e_{n-1}+e_n \right\}, \\ 
\Delta^\vee &=& \left\{ e^*_1-e^*_2, e^*_2-e^*_3,\cdots,e^*_{n-1}-e^*_n, e^{*}_{n-1}+e^{*}_{n}-e^{*}_{0} \right\}.
\end{eqnarray} 
This based root datum determines the group $\GSpin(2n)$ uniquely, equipped with a Borel subgroup 
containing a maximal torus. 

\subsubsection{The quasi-split forms of $\GSpin(2n)$}\label{G-even-q-split} 
In the even case, quasi-split non-split forms also exist. We fix a splitting 
$(\B, \T, \{x_{\alpha}\}_{\alpha\in\Delta})$, where $\{x_{\alpha}\}$ is a collection of root vectors, 
one for each simple root of $\T$ in $\B$. As explained in \cite[\S 7.1]{cpss-clay} for 
the quasi-split forms of $\SO(2n)$, the quasi-split forms of $\GSpin(2n)$ over $k$ 
are in one-one correspondence with homomorphisms from $\mbox{Gal}(\bar{k}/k)$ to the group of 
automorphisms of the character lattice preserving $\Delta$. This group has two elements: the trivial 
and the one switching $e_{n-1}-e_n$ and $e_{n-1}+e_n$ while keeping all other simple roots fixed. 

By Class Field Theory such homomorphisms correspond to quadratic characters 
\[ \mu : k^\times \backslash \A_k^\times \longrightarrow \{\pm 1\}. \]
When $\mu$ is nontrivial we denote the associated quasi-split non-split group with $\GSpin^\mu(2n)$ 
or simply $\GSpin^*(2n)$ when the particular $\mu$ is unimportant. We will also denote the 
quadratic extension of $k$ associated with $\mu$ by $K^\mu/K$ or simply $K/k$.

\subsection{Embeddings} \label{embed}

In order to proceed with the analogues of the relevant integrals 
for $\GSpin$ groups we need 
certain embeddings of these groups inside each other, which we now review. We first recall some 
basic facts about linear algebraic groups. 

Let $\G$ be a connected reductive linear algebraic group over $k$ with a fixed Borel 
subgroup containing a fixed maximal torus.  Denote the associated roots by $R$ and 
the positive roots by $R^+$. For each $\alpha \in R$ denote the root group 
homomorphism associated with $\alpha$ by  
\[ u_\alpha : \mathbb{G}_a \longrightarrow \G \]
and denote the root group by $\U_\alpha$, the image of $u_\alpha$ in $\G$. 
We are now prepared to describe the various embeddings we will need. 
 
\subsubsection{Embedding $\i : \GSpin(2m+1) \hookrightarrow \GSpin(2n)$ for $m < n$} 
The group $\GSpin(2m+1)$ is an {\it almost direct} product (i.e., with finite intersection) of 
$\Spin(2m+1)$, the derived group of $\GSpin(2m+1)$, and the connected component of its 
center, which is a torus. The derived group is generated by the root subgroups and, by
\cite[Prop. 2.3]{duke}, the connected component of the center equals  
\[ \left\{e^{*}_{0}(t) \,|\, t \in \GL(1) \right\}. \]

To describe the embedding we embed each root subgroup of $\GSpin(2m+1)$ into $\GSpin(2n)$ 
and also embed the connected component of the center of the former into the latter. 
We should, however, ensure that the images of elements in the intersection are consistent. 

Let us use the notation $e_i$ and $e^*_i$ for roots and coroots of $\GSpin(2m+1)$ and 
$f_i$ and $f^*_i$ for those of $\GSpin(2n)$. 
By (\ref{R-odd}) the roots of $\GSpin(2m+1)$ are given by 
\[ R_{2m+1} = \left\{ \pm e_i \, | \, 1 \le i \le m \right\} \cup \left\{ \pm (e_i \pm e_j) \, | \, 1 \le i < j \le m  \right\}\] 
and by (\ref{R-even}) those of $\GSpin(2n)$ may be written as 
\[ R_{2n} = \left\{ \pm (f_i \pm f_j) \, | \, 1 \le i < j \le n  \right\}.\] 

For $1 \le i<j \le m,$ we define 
\begin{eqnarray}
\i\left( u_{e_i - e_j}(x) \right) &=& u_{f_i - f_j}(x) \label{i-odd-1}\\ 
\i\left( u_{e_i + e_j}(x) \right) &=& u_{f_i + f_j}(x) \label{i-odd-2}  
\end{eqnarray}
and for $1 \le i \le m,$ we define 
\begin{equation}
\i\left(u_{e_i}(x) \right) = u_{f_i - f_{n}}(x) u_{f_i + f_{n}}(-x).  \label{i-odd-3}
\end{equation}
For negative roots we define $\i$ in a similar way using the corresponding negative roots on 
the right hand side. Also, set 
\begin{equation}
\i\left(e^*_0(t) \right) = f^*_0(t).  \label{i-odd-4}
\end{equation}

\begin{lem}
For $m<n$ the embedding $\i:\GSpin(2m+1) \hookrightarrow \GSpin(2n)$ 
defined via (\ref{i-odd-1})--(\ref{i-odd-4}) is well-defined. 
\end{lem}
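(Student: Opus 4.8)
The plan is to check that $\i$, which has been prescribed on a generating set of $\GSpin(2m+1)$, respects a presentation of the group, so that it extends to a (unique) homomorphism of algebraic groups. Recall from \cite[Prop.~2.3]{duke} that $\GSpin(2m+1)$ is the almost direct product of its derived group $\Spin(2m+1)$, generated by the root subgroups $\U_{\pm\alpha}$ with $\alpha\in R_{2m+1}^{+}$, and the central torus $\ZZ^{\circ}=\{e^{*}_{0}(t)\}$, the two intersecting in a finite group. So I would reduce the lemma to three points: (i) that (\ref{i-odd-1})--(\ref{i-odd-3}) and their negative-root analogues define a homomorphism $\i_{0}\colon\Spin(2m+1)\to\GSpin(2n)$; (ii) that $t\mapsto f^{*}_{0}(t)$ defines a homomorphism $\ZZ^{\circ}\to\GSpin(2n)$, which is clear since $f^{*}_{0}$ is a cocharacter; and (iii) that $\i_{0}$ and the rule $e^{*}_{0}(t)\mapsto f^{*}_{0}(t)$ agree on $\Spin(2m+1)\cap\ZZ^{\circ}$, the compatibility needed for the product decomposition to yield a well-defined map on all of $\GSpin(2m+1)$.

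For (i) I would use that $\Spin(2m+1)$ is semisimple and simply connected, hence presented by the $u_{\alpha}$ subject to the Steinberg relations: additivity in each root subgroup, the Chevalley commutator relations $[u_{\alpha}(x),u_{\beta}(y)]=\prod_{i,j\ge 1}u_{i\alpha+j\beta}(c_{\alpha\beta ij}\,x^{i}y^{j})$, and the relations controlling $w_{\alpha}(u)=u_{\alpha}(u)u_{-\alpha}(-u^{-1})u_{\alpha}(u)$ and $h_{\alpha}(u)=w_{\alpha}(u)w_{\alpha}(1)^{-1}$ (see, e.g., \cite[\S~7.4.1]{spr} and the references there). The roots $\pm(e_{i}\pm e_{j})$, $1\le i<j\le m$, form a subsystem of type $D_{m}$ on which $\i_{0}$ is merely the relabelling $e_{i}\mapsto f_{i}$; it thus identifies $\langle\U_{\pm(e_{i}\pm e_{j})}:i<j\le m\rangle$ with the corresponding $D_{m}$-subgroup of $\GSpin(2n)$, and all relations among these generators are respected automatically. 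The substance lies in the relations involving a short root $e_{i}$. Here I would first record the facts about $R_{2n}$ that $f_{i}-f_{n}$ and $f_{i}+f_{n}$ are orthogonal with sum $2f_{i}\notin R_{2n}$, so that $\U_{f_{i}-f_{n}}$ and $\U_{f_{i}+f_{n}}$ commute and $x\mapsto u_{f_{i}-f_{n}}(x)u_{f_{i}+f_{n}}(-x)$ is genuinely a homomorphism of $\mathbb{G}_{a}$, and that more generally the rank-one subgroups attached to $\pm(f_{i}-f_{n})$ and to $\pm(f_{i}+f_{n})$ commute with one another. Then I would verify, case by case, the finitely many relations $[u_{e_{i}}(x),u_{\pm e_{j}}(y)]$ ($i\ne j$), $[u_{e_{i}-e_{j}}(x),u_{e_{j}}(y)]$, $[u_{e_{i}}(x),u_{-e_{i}}(y)]$, together with the $w_{\alpha}$- and $h_{\alpha}$-relations, by expanding via (\ref{i-odd-3}) and applying the Chevalley commutator relations of $\GSpin(2n)$; note that although $R_{2n}$ is simply laced, the quadratic terms $u_{e_{i}+e_{j}}(c\,xy^{2})$ of the type-$B_{m}$ relations do reappear on the $\GSpin(2n)$ side, generated by the failure of $\U_{f_{i}-f_{n}}$ to commute with $\U_{f_{j}+f_{n}}$.

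Carrying out (i) also pins down $\i_{0}$ on the maximal torus of $\Spin(2m+1)$: from $\i_{0}(w_{e_{i}}(u))=w_{f_{i}-f_{n}}(u)\,w_{f_{i}+f_{n}}(-u)$, a product of commuting Weyl-group representatives, one obtains $\i_{0}(e_{i}^{\vee}(u))=(f_{i}-f_{n})^{\vee}(u)\,(f_{i}+f_{n})^{\vee}(u)=(2f^{*}_{i}-f^{*}_{0})(u)$ by (\ref{v-even-1})--(\ref{v-even-2}), and likewise $\i_{0}((e_{i}-e_{j})^{\vee}(u))=(f^{*}_{i}-f^{*}_{j})(u)$ and $\i_{0}((e_{i}+e_{j})^{\vee}(u))=(f^{*}_{i}+f^{*}_{j}-f^{*}_{0})(u)$. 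Thus on this torus $\i_{0}$ is induced by the lattice embedding $e^{*}_{i}\mapsto f^{*}_{i}$ ($0\le i\le m$); since $\Spin(2m+1)\cap\ZZ^{\circ}$ lies in this torus and the embedding in particular carries $e^{*}_{0}(t)$ to $f^{*}_{0}(t)$, point (iii) follows with no extra work and $\i$ is well-defined. (Concretely, $\Spin(2m+1)\cap\ZZ^{\circ}=\{1,e^{*}_{0}(-1)\}$ and $e^{*}_{0}(-1)=e_{m}^{\vee}(-1)$, which $\i_{0}$ sends to $(2f^{*}_{m}-f^{*}_{0})(-1)=f^{*}_{0}(-1)$, consistent with (\ref{i-odd-4}).)

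I expect the case-by-case verification in (i) to be the main obstacle --- not conceptually, but because of the sign bookkeeping in the $w_{\alpha}$- and $h_{\alpha}$-relations and especially the factor-of-$2$ structure constants (for instance $[u_{e_{i}}(x),u_{e_{j}}(y)]=u_{e_{i}+e_{j}}(\pm 2xy)$ in type $B_{m}$), which must be reconciled with the $\GSpin(2n)$ commutator relations for the splitting $\{x_{\alpha}\}$ fixed in \S\ref{G-even-q-split}. A tidier way to package this, which I would use as a cross-check, is to factor $\i$ as $\GSpin(2m+1)\hookrightarrow\GSpin(2m+2)\hookrightarrow\GSpin(2n)$: the subsystem $\{\pm(f_{i}\pm f_{j}):i,j\in\{1,\dots,m,n\}\}$ of $R_{2n}$ is of type $D_{m+1}$ and is cut out by a coordinate subspace, hence defines a Levi subgroup of $\GSpin(2n)$ with the second inclusion tautological, while the first inclusion is the classical embedding of $B_{m}$ into $D_{m+1}$, of which (\ref{i-odd-3}) is the standard formula.
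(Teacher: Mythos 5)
Your proposal is correct and rests on the same essential idea as the paper: exploit the almost-direct-product decomposition of $\GSpin(2m+1)$ into $\Spin(2m+1)$ and the connected center $\{e_0^*(t)\}$, and check that the two prescriptions of $\i$ agree on the two-element intersection by verifying that $c=\alpha_m^\vee(-1)=e_0^*(-1)$ maps to $f_0^*(-1)$ either way --- your step (iii), where your computation via $\i_0(e_m^\vee(u))=(2f_m^*-f_0^*)(u)$ is equivalent to the paper's computation via $\i(c)=\i(w_{\alpha_m})^2=(w_{f_m-f_n}w_{f_m+f_n}^{-1})^2$. You are more thorough than the paper in also flagging (and sketching how to discharge) your step (i) --- that (\ref{i-odd-1})--(\ref{i-odd-3}) actually assemble to a homomorphism on $\Spin(2m+1)$, via the Steinberg presentation --- which the paper silently takes as the classical $B_m\hookrightarrow D_n$ embedding of simply connected groups; your proposed factorization through a copy of $\GSpin(2m+2)$ sitting, after Weyl conjugation, as a Levi factor of $\GSpin(2n)$, with the first arrow the standard $B_m\hookrightarrow D_{m+1}$ for which (\ref{i-odd-3}) is the textbook formula for short roots, is a tidy way to short-circuit that commutator bookkeeping.
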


\begin{proof}
We have to check that $\i$ is well-defined on the intersection of the derived group and 
the connected component of the center. 

As verified in \cite[\S 2]{duke} the intersection consists of the trivial element and 
the nontrivial element $c=\alpha^\vee_m(-1)$. On the one hand we have 
\[ \alpha^\vee_m(-1) = (2 e^{*}_{m}-e^{*}_{0})(-1) = e^{*}_{0}(-1)^{-1}=e^{*}_{0}(-1) \] 
and consequently
\[ \i(c) = f^{*}_{0}(-1). \] 

On the other hand $c$ also belongs to the derived group and can be expressed in terms of 
the root group homomorphisms as follows. Recall \cite[Eq. (18)--(20)]{duke} 
that for any root $\alpha$ we have 
\begin{equation} 
\alpha^\vee(-1) = w_\alpha^2 \label{w-alpha}, 
\end{equation} 
where $w_\alpha = u_\alpha(1) u_{-\alpha}(-1) u_\alpha(1)$. 
This means that 
\begin{eqnarray*} 
\i\left(w_{\alpha_{m}}\right) &=& \i\left( u_{e_{m}}(1) u_{-e_{m}}(-1) u_{e_{m}}(1) \right) \\
&=&  
u_{f_{m}-f_{n}}(1) u_{f_{m}+f_{n}}(-1) \cdot
u_{-f_{m}+f_{n}}(-1) u_{-f_{m}-f_{n}}(1) \cdot
u_{f_{m}-f_{n}}(1) u_{f_{m}+f_{n}}(-1)   \\ 
&=& 
u_{f_{m}-f_{n}}(1) u_{-f_{m}+f_{n}}(-1) u_{f_{m}-f_{n}}(1) \cdot 
u_{f_{m}+f_{n}}(-1) u_{-f_{m}-f_{n}}(1) u_{f_{m}+f_{n}}(-1)  \\  
&=& 
w_{f_{m}-f_{n}} w_{f_{m}+f_{n}}^{-1}.  
\end{eqnarray*} 
Hence, 
\begin{eqnarray*}
\i(c) &=& \i(w_{\alpha_{m}})^{2} \\ 
&=& ( w_{f_{m}-f_{n}} w_{f_{m}+f_{n}}^{-1} )^{2} \\
&=& (f_{m}-f_{n})^{\vee}(-1) \cdot (-f_{m}-f_{n})^{\vee}(-1) \\ 
&=& (f^{*}_{m}-f^{*}_{n}-f^{*}_{m}-f^{*}_{n}+f^{*}_{0})(-1) \\ 
&=& f^{*}_{0} (-1). 
\end{eqnarray*}
Here, we have used (\ref{v-even-1}), (\ref{v-even-2}), and the fact that 
$w_{f_{m}-f_{n}}$ and  $w_{f_{m}+f_{n}}$ commute \cite[p. 157]{duke}.
We conclude that $\i(c)$ is well-defined and this proves the lemma. 
\end{proof} 

\subsubsection{Embedding $\i : \GSpin(2m) \hookrightarrow \GSpin(2n+1)$ for $m \le n$} %
We proceed in a similar way for this embedding as well. Recall that again the group 
$\GSpin(2m)$ is an almost direct product of its derived group and the connected 
component of the center
\[ \left\{e^{*}_{0}(t) \,|\, t \in \GL(1) \right\}.\]

Using a similar notation as before recall that as in (\ref{R-odd}) and (\ref{R-even}) the roots 
of $\GSpin(2m)$ and $\GSpin(2n+1)$ are respectively given by 
\[ R_{2m} = \left\{ \pm (e_i \pm e_j) \, | \, 1 \le i < j \le m  \right\}.\] 
and 
\[ R_{2n+1} = \left\{ \pm f_i \, | \, 1 \le i \le n \right\} \cup 
\left\{ \pm (f_i \pm f_j) \, | \, 1 \le i < j \le n  \right\}.\] 

For $1 \le i<j \le m$ we define 
\begin{eqnarray}
\i\left( u_{e_i - e_j}(x) \right) &=& u_{f_i - f_j}(x) \label{i-even-1}\\ 
\i\left( u_{e_i + e_j}(x) \right) &=& u_{f_i + f_j}(x) \label{i-even-2}.  
\end{eqnarray}
For negative roots we define $i$ in a similar way using the corresponding negative roots on 
the right hand side. Also, set 
\begin{equation}
\i\left(e^*_0(t) \right) = f^*_0(t).  \label{i-even-3}
\end{equation}

\begin{lem}
For $m \le n$ the embedding $\i:\GSpin(2m) \hookrightarrow \GSpin(2n+1)$ 
defined via (\ref{i-even-1})--(\ref{i-even-3}) is well-defined. 
\end{lem}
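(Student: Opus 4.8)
The plan is to follow the proof of the preceding lemma, which is in fact easier here because each $u_{e_{i}\pm e_{j}}$ is sent to a single root subgroup rather than a product of two. Via (\ref{i-even-1})--(\ref{i-even-2}) the root subgroups of $\GSpin(2m)$ are identified with those attached to the closed subsystem $\{\pm(f_{i}\pm f_{j}):1\le i<j\le m\}$ (of type $D_{m}$) inside the root system of $\GSpin(2n+1)$, so this prescription extends to a homomorphism $\i:\Spin(2m)\to\GSpin(2n+1)$ on the derived group by the structure theory of subsystem subgroups; likewise (\ref{i-even-3}) is a homomorphism on the central torus $Z^{\circ}=\{e^{*}_{0}(t)\}$ with central image in $\GSpin(2n+1)$. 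Hence, exactly as before, the only thing to verify is that these two partial definitions agree on the finite group $\Spin(2m)\cap Z^{\circ}$ (the case $m=1$ is immediate, $\GSpin(2)$ having trivial derived group).

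First I would recall from \cite[\S 2]{duke} that $\Spin(2m)\cap Z^{\circ}=\{1,c\}$ with $c=e^{*}_{0}(-1)$, and record, using (\ref{v-even-1}) and (\ref{v-even-2}), a second presentation of this element entirely inside the derived group:
\[ (e_{m-1}-e_{m})^{\vee}(-1)\,(e_{m-1}+e_{m})^{\vee}(-1) \;=\; (2e^{*}_{m-1}-e^{*}_{0})(-1) \;=\; e^{*}_{0}(-1) \;=\; c, \]
the two coroots being orthogonal so that their images commute on the nose. Now (\ref{i-even-3}) gives $\i(c)=f^{*}_{0}(-1)$ directly. On the other hand, from (\ref{i-even-1})--(\ref{i-even-2}) and $w_{\alpha}=u_{\alpha}(1)u_{-\alpha}(-1)u_{\alpha}(1)$ one gets $\i(w_{e_{m-1}\pm e_{m}})=w_{f_{m-1}\pm f_{m}}$, so by (\ref{w-alpha})
\[ \i\bigl((e_{m-1}\pm e_{m})^{\vee}(-1)\bigr) \;=\; \i(w_{e_{m-1}\pm e_{m}})^{2} \;=\; w_{f_{m-1}\pm f_{m}}^{2} \;=\; (f_{m-1}\pm f_{m})^{\vee}(-1), \]
whence, using (\ref{v-odd-1}) and (\ref{v-odd-2}),
\[ \i(c) \;=\; (f^{*}_{m-1}-f^{*}_{m})(-1)\,(f^{*}_{m-1}+f^{*}_{m}-f^{*}_{0})(-1) \;=\; (2f^{*}_{m-1}-f^{*}_{0})(-1) \;=\; f^{*}_{0}(-1). \]
The two values of $\i(c)$ coincide, and since $f^{*}_{0}(-1)$ is central in $\GSpin(2n+1)$ the two partial definitions glue to a well-defined homomorphism on $\GSpin(2m)=\Spin(2m)\cdot Z^{\circ}$, which proves the lemma.

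There is no genuine obstacle here: unlike in the odd-into-even embedding, no commutation relations among Weyl-group representatives $w_{\alpha}$ are needed, because the relevant coroots already lie in the torus. The entire content is the bookkeeping above once one has the two expressions for $c$ — one as a value of $e^{*}_{0}$ and one as a product of coroots of the derived group — after which the two computations of $\i(c)$ are forced.
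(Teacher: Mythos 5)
Your proof is correct and follows the same strategy as the paper's: reduce well-definedness to checking agreement on the two-element intersection $\Spin(2m)\cap Z^{\circ}=\{1,c\}$, compute $\i(c)=f^{*}_{0}(-1)$ once via $(\ref{i-even-3})$ and once by writing $c=w_{\alpha_{m-1}}^{2}w_{\alpha_{m}}^{2}$, pushing through $\i$ on the root groups, and evaluating coroots of $\GSpin(2n+1)$ via $(\ref{v-odd-1})$--$(\ref{v-odd-2})$. Your side remark that this case is simpler than the odd-into-even embedding (each $u_{e_{i}\pm e_{j}}$ maps to a single root group element, so $\i(w_{\alpha_{m-1}})$ and $\i(w_{\alpha_m})$ are individual Weyl representatives whose squares are immediately torus elements) is accurate; the paper's parenthetical reference to commutation of $w_{f_m-f_n}$ and $w_{f_m+f_n}$ in this lemma appears to be carried over from the first lemma and is not actually what is used here.
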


\begin{proof}
We have to check that $\i$ is well-defined on the intersection of the derived group and 
the connected component of the center. This intersection again consists of two elements 
with the nontrivial element now being 
$c=\alpha^\vee_{m-1}(-1)\alpha^\vee_m(-1)$ \cite[\S 2]{duke}. 

On the one hand we have 
\[ \alpha^\vee_{m-1}(-1)\alpha^\vee_m(-1) = (2 e^{*}_{m-1}-e^{*}_{0})(-1) = e^{*}_{0}(-1)^{-1}=e^{*}_{0}(-1) \] 
and consequently 
\[ \i(c) = f^{*}_{0}(-1). \] 

On the other hand $c$ also belongs to the derived group. We have 
\begin{eqnarray*} 
\i\left(w_{\alpha_{m-1}} w_{\alpha_{m}}\right) &=& 
\i\left( u_{e_{m-1}-e_{m}}(1) u_{-e_{m-1}+e_{m}}(-1) u_{e_{m-1}-e_{m}}(1) \right.  \\ 
&& \quad \cdot \left. u_{e_{m-1}+e_{m}}(1) u_{-e_{m-1}-e_{m}}(-1) u_{e_{m-1}-e_{m}}(1) \right) \\
&=&  
u_{f_{m-1}-f_{m}}(1) u_{-f_{m-1}+f_{m}}(-1) u_{f_{m-1}-f_{m}}(1)  \\ 
&& \cdot \,  u_{f_{m-1}+f_{m}}(1) u_{-f_{m-1}-f_{m}}(-1) u_{f_{m-1}-f_{m}}(1) \\ 
&=& 
w_{f_{m-1}-f_{m}} w_{f_{m-1}+f_{m}}.  
\end{eqnarray*} 
Hence,                  
\begin{eqnarray*}
\i(c) &=& \i(  w_{\alpha_{m-1}}^{2} w_{\alpha_{m}}^{2} ) \\ 
&=& (f_{m-1}-f_{m})^{\vee}(-1) \cdot (f_{m-1}+f_{m})^{\vee}(-1) \\ 
&=& (f^{*}_{m-1}-f^{*}_{m}+f^{*}_{m-1}+f^{*}_{m}-f^{*}_{0})(-1) \\ 
&=& f^{*}_{0} (-1). 
\end{eqnarray*}
Here, we have used (\ref{v-odd-1})--(\ref{v-odd-3}), and the fact that 
$w_{f_{m}-f_{n}}$ and  $w_{f_{m}+f_{n}}$ commute \cite[p. 157]{duke}.
We conclude that $\i(c)$ is well-defined and this proves the lemma. 
\end{proof} 

We end this section by proving a lemma which explicitly gives the image under the map $\i$ 
of elements in the maximal torus of $\H$.  We will use this lemma later when we discuss 
convergence of certain zeta integrals we have to deal with. 

\begin{lem}\label{torus-i}
With notation as above, let $a = e_0^*(t_0) e_1^*(t_1) \cdots e_m^*(t_m)$ be an arbitrary 
element in the maximal torus of $\H$. Then, 
\[ \i(a) = \begin{cases}
f_0^*(t_0) f_1^*(t_1) \cdots f_{m-1}^*(t_{m-1}) f_m^*(t_m) 
& \text{ if $\H$ is even and $\G$ is odd}, \\
\\
f_0^*(-t_0) f_1^*(t_1) \cdots f_{m-1}^*(t_{m-1}) f_m^*(-t_m)f_n^*(-1) 
& \text{ if $\H$ is odd and $\G$ is even.}
\end{cases} \] 
\end{lem}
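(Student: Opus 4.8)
The plan is to compute $\i(a)$ directly from the definitions of $\i$ on the two relevant embeddings by expressing the torus element $a$ in terms of the data that $\i$ is defined on, namely the root group homomorphisms and $e_0^*(t)$. The first observation is that $e_0^*(t_0)$ maps to $f_0^*(t_0)$ in both cases by \eqref{i-odd-4} (resp.\ \eqref{i-even-3}), so it suffices to understand what $\i$ does to $e_i^*(t_i)$ for $1 \le i \le m$. Since each cocharacter $e_i^*$ lies in the derived group only up to the central torus, the cleanest route is to write $e_i^*(t)$ using the coroots that appear in $R^\vee$ together with $e_0^*$. For instance, in the odd group $\GSpin(2m+1)$ one has $(2e_i^* - e_0^*)(t) = e_i^*(t)^2 e_0^*(t)^{-1}$, and more usefully $e_i^*(t)$ itself can be produced from torus elements of the form $\alpha^\vee(t)$ via the standard identity relating coroots to the torus, so I would track $\i$ through these expressions.

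Concretely, I would proceed by induction on $i$ going downward, or alternatively use the telescoping identity $e_i^*(t) = (e_i^* - e_{i+1}^*)(t) \cdot e_{i+1}^*(t)$ for $i < m$, noting that $(e_i^* - e_{i+1}^*) = \alpha_i^\vee$ is a simple coroot for both root data when $i \le m-1$. By \eqref{i-odd-1}/\eqref{i-even-1} we have $\i(u_{e_i - e_j}(x)) = u_{f_i - f_j}(x)$, and since $w_\alpha$ and hence $\alpha^\vee(t)$ are built functorially from the $u_{\pm\alpha}$, this forces $\i((e_i^* - e_{i+1}^*)(t)) = (f_i^* - f_{i+1}^*)(t)$ for $1 \le i \le m-1$. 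So the whole computation reduces to determining $\i(e_m^*(t))$, the single ``bottom'' coordinate, after which the telescoping propagates the answer to all $e_i^*(t_i)$. The sign changes on $t_0$ and $t_m$ and the extra factor $f_n^*(-1)$ in the odd-to-even case must all emerge from this last step.

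For that last step I would use the relation \eqref{w-alpha}: $\alpha^\vee(-1) = w_\alpha^2$ with $w_\alpha = u_\alpha(1)u_{-\alpha}(-1)u_\alpha(1)$, generalized to $\alpha^\vee(t) = $ a product of $w_\alpha$-type elements with diagonal torus parts (the standard $\mathrm{SL}_2$ computation), applied to the special root $e_m$ in the odd case and the pair $e_{m-1} \pm e_m$ in the even case. In the first case (even $\H$, odd $\G$) the relevant short root $e_m$ of $\GSpin(2m+1)$ has $\i(u_{e_m}(x)) = u_{f_m - f_n}(x) u_{f_m + f_n}(-x)$ by \eqref{i-odd-3}, and the same $w_{f_m - f_n} w_{f_m + f_n}^{-1}$ bookkeeping already carried out in the proof of the first lemma shows how $e_m^*(t)$ maps; I expect this to give $f_m^*(t)$ cleanly (so the first case has no signs, consistent with the statement, once one also checks $e_0^*$ and uses $\alpha_m^\vee = 2e_m^* - e_0^*$ versus the $D_n$ simple coroot). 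In the second case (odd $\H$, even $\G$) the ``extra'' simple coroot of $\GSpin(2m)$ is $e_{m-1}^* + e_m^* - e_0^*$, whose image under $\i$ I would read off from \eqref{i-even-2} as $f_{m-1}^* + f_m^* - f_0^*$, and combining this with $\alpha_{m-1}^\vee = e_{m-1}^* - e_m^*$ mapping to $f_{m-1}^* - f_m^*$ lets me solve for $\i(e_{m-1}^*(t))$ and $\i(e_m^*(t))$ inside $\GSpin(2n+1)$; here the asymmetry between the $D_m$ and $B_n$ coroot lattices is exactly what produces the signs on $t_0,t_m$ and the correction $f_n^*(-1)$.

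The main obstacle I anticipate is precisely this last reduction: the cocharacters $e_i^*$ are not individually coroots of the groups in question, so one cannot simply apply $\i$ coordinate-by-coordinate to $a$, and one must instead solve a small linear system over $\Z$ inside the cocharacter lattices, being careful that the solution is only well-defined up to the kernel of the relevant map and that $\i$ is only given on coroots and on $e_0^*$. Keeping track of which combinations are genuinely in the span of the coroots (versus needing the $e_0^*$ generator) is where the sign $-1$'s and the residual $f_n^*(-1)$ will appear, and verifying consistency with the already-established well-definedness of $\i$ (so that the answer does not depend on the chosen presentation of $a$) is the delicate point; everything else is the routine $\mathrm{SL}_2$-inside-a-reductive-group computation with $w_\alpha$'s that the preceding two lemmas have already modeled.
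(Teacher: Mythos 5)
Your overall strategy---decompose $a$ into a product of terms on which $\i$ is actually defined (namely $e_0^*$ and coroots), then push each factor through using the $w_\alpha$ / $\mathrm{SL}_2$ bookkeeping---is exactly the route the paper takes. But your proposal has the two cases swapped, and as a result you predict the signs arising from the wrong embedding.

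Concretely: when $\H$ is \emph{even} and $\G$ is \emph{odd} we are embedding $\GSpin(2m)$ into $\GSpin(2n+1)$ via \eqref{i-even-1}--\eqref{i-even-3}; here every root of $\H$ (type $D_m$) is a long root mapping to a single root group $u_{f_i \pm f_j}$, and it is the relevant coroot $e_{m-1}^* + e_m^* - e_0^*$ that enters. This is the \emph{clean} case with no signs. When $\H$ is \emph{odd} and $\G$ is \emph{even} we use $\GSpin(2m+1) \hookrightarrow \GSpin(2n)$ via \eqref{i-odd-1}--\eqref{i-odd-4}; here the short root $e_m$ of $B_m$ has no single matching root in $D_n$ and maps to the product $u_{f_m - f_n}(x)\, u_{f_m + f_n}(-x)$. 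That minus sign is precisely the source of all the signs in the statement: working it through gives $\i(e_m^\vee(x)) = (f_m - f_n)^\vee(x)\,(f_m + f_n)^\vee(-x)$, whence $\i(e_m^*(t)) = f_m^*(-t)\,f_n^*(-1)\,f_0^*(-1)$. You wrote the opposite, claiming the short-root computation ``gives $f_m^*(t)$ cleanly (so the first case has no signs)'' and attributing the signs to the $D_m \hookrightarrow B_n$ direction; if you actually ran the $w_\alpha$ bookkeeping as you describe, you would contradict your own prediction.

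One smaller issue: you speak of ``solving a small linear system over $\Z$'' inside the cocharacter lattice, but the system is not solvable over $\Z$---the $\Z$-span of $e_0^*$ together with the coroots has index two in $X^\vee$, and $e_m^*$ in particular is only half of something in that span. The paper's decomposition therefore involves terms like $(e_{m-1}^* - e_m^*)\bigl((t_1\cdots t_{m-1} t_m^{-1})^{1/2}\bigr)$ and $(e_{m-1}^* + e_m^* - e_0^*)\bigl((t_1\cdots t_m)^{1/2}\bigr)$ with explicitly coordinated choices of square root; this is a small but genuine point of delicacy that ``over $\Z$'' sweeps under the rug, and it is where the $e_0^*$-coefficient picks up the extra $(t_1\cdots t_m)^{1/2}$ factor rather than simply being $t_0$ as you first asserted.
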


\begin{proof}
The proof is essentially a careful chasing of the definitions in terms of the root data. 
First, assume that $\H=\GSpin(2m).$ We have 
\begin{eqnarray*}
a &=& e_0^*\left(t_0 (t_1 \cdots t_{m-1} t_m)^{1/2}\right) 
(e_1^*-e_2^*)(t_1) (e_2^*-e_3^*)(t_1 t_2) \cdots (e_{m-2}^*-e_{m-1}^*)\left(t_1 t_2 \cdots t_{m-2}\right) \\  
&& (e_{m-1}^*-e_m^*)\left((t_1 \cdots t_{m-1} t_m^{-1})^{1/2}\right) 
(e_{m-1}^*+e_m^*-e_0^*)\left((t_1 \cdots t_{m-1} t_m)^{1/2}\right),   
\end{eqnarray*}
where the choices of the square roots have to be made appropriately. More precisely, in order to get 
$e_{m-1}^*(t_{m-1})$ and $e_m^*(t_m)$ we have to choose the square roots in the last two terms 
consistently. Writing $( t_1 \cdots t_{m-1} t_m^{-1} )^{1/2} = ( t_1 \cdots t_{m-1} )^{1/2} \, t_m^{-1/2}$, we make 
arbitrary choices for the square roots $( t_1 \cdots t_{m-1} )^{1/2}$ and $t_m^{-1/2}$ in the term $e_{m-1}^*-e_m^*$ and 
use the same choices of the square roots for $( t_1 \cdots t_{m-1} )^{1/2} \, t_m^{1/2} = (t_1 \cdots t_{m-1} t_m)^{1/2}$ 
in the last term $e_{m-1}^*+e_m^*-e_0^*$. Now, in order to get $e_0^*(t_0)$ the choice of the square root in the first 
term $e_0^*$ has to be the same as that in the last term. 

Applying the map $\i$ and using the root data details we get 
\begin{eqnarray*}
\i(a) &=& f_0^*\left(t_0 (t_1 \cdots t_{m-1} t_m)^{1/2}\right) (f_1-f_2)^\vee(t_1)\cdots
(f_{m-2}-f_{m-1})^\vee(t_1\cdots t_{m-1}) \\ 
&& (f_{m-1}-f_m)^\vee\left((t_1 \cdots t_{m-1} t_m^{-1})^{1/2}\right) 
(f_{m-1}+f_m)^\vee\left((t_1 \cdots t_{m-1} t_m)^{1/2}\right) \\
&=& f_0^*\left(t_0 (t_1 \cdots t_{m-1} t_m)^{1/2}\right) 
(f_1^*-f_2^*)(t_1) \cdots (f_{m-2}^*-f_{m-1}^*)\left(t_1 t_2 \cdots t_{m-2}\right) \\
&& (f_{m-1}^*-f_m^*)\left((t_1 \cdots t_{m-1} t_m^{-1})^{1/2}\right) 
(f_{m-1}^*+f_m^*-f_0^*)\left((t_1 \cdots t_{m-1} t_m)^{1/2}\right) \\
&=& f_0^*(t_0) f_1^*(t_1) \cdots f_{m-1}^*(t_{m-1}) f_m^*(t_m)
\end{eqnarray*} 
Again, similar consistent choices of the square roots need to be made.  

Next, assume that $\H=\GSpin(2m+1).$ A similar calculation will go through. A necessary step, however, 
is to calculate the image under $\i$ of $e_m^\vee(x)$. To do this, note that 
\[ e_m^\vee(x) = w_{e_m}(x) w_{e_m}(1)^{-1} = u_{e_m}(x) u_{-e_m}(-x^{-1}) u_{e_m}(x) 
\left(u_{e_m}(1) u_{-e_m}(-1) u_{e_m}(1) \right)^{-1}. \] 
Apply $\i$ to get 
\begin{eqnarray*}
\i(e_m^\vee(x)) &=& 
u_{f_m-f_n}(x) u_{f_m+f_n}(-x) \cdot u_{-f_m+f_n}(-x^{-1}) u_{-f_m-f_n}(x^{-1}) \cdot u_{f_m-f_n}(x) u_{f_m+f_n}(-x) \\ 
&& 
\left(u_{f_m-f_n}(1) u_{f_m+f_n}(-1) \cdot u_{-f_m+f_n}(-1) u_{-f_m-f_n}(1) \cdot u_{f_m-f_n}(1) u_{f_m+f_n}(-1)\right)^{-1} \\ 
&=&
u_{f_m-f_n}(x) u_{-f_m+f_n}(-x^{-1}) u_{f_m-f_n}(x) \left( u_{f_m-f_n}(1) u_{-f_m+f_n}(-1) u_{f_m-f_n}(1) \right)^{-1} \\ 
&& 
u_{f_m+f_n}(-x) u_{-f_m-f_n}(x^{-1}) u_{f_m+f_n}(-x) \left( u_{f_m+f_n}(-1) u_{-f_m-f_n}(1) u_{f_m+f_n}(-1) \right)^{-1} \\ 
&=& w_{f_m-f_n}(x) w_{f_m-f_n}(1)^{-1} \cdot w_{f_m+f_n}(-x) w_{f_m+f_n}(-1)^{-1} \\
&=& \left(f_m-f_n\right)^\vee(x) \cdot \left(f_m+f_n\right)^\vee(-x). 
\end{eqnarray*} 
This last calculation is responsible for the appearance of the negative signs in the $\H$ odd case in the 
statement of the lemma. 
\end{proof}

\section{Weak Transfer for the Quasi-split $\GSpin(2n)$} \label{q-split-sec}

In this section $\G=\GSpin^*(2n)$ will denote one of the quasi-split non-split forms of 
$\GSpin(2n)$ as in \ref{G-even-q-split}. We will denote the associated quadratic 
extension by $K/k$ and $\A=\A_k$ will continue to denote the ring of ad\`eles of 
$k$. Also, $\G$ is associated with a nontrivial quadratic character 
$\mu : k^\times \backslash \A_k^\times \longrightarrow \{\pm 1\}$. 
 
The connected component of the $L$-group of $\G$ is 
${}^L\G^0 = \GSO(2n,\C)$ and the $L$-group can be written as 
\[ {}^L\G = \GSO(2n,\C) \rtimes W_{k}, \] 
where the Weil group acts through the quotient 
\[ W_{k} / W_{K} \cong \operatorname{Gal}(K/k). \] 
The $L$-group of $\GL(2n)$ is $\GL(2n,\C) \times W_{k}$, a direct product because 
$\GL(2n)$ is split. These are the Weil forms of the $L$-group, or we can equivalently 
use the Galois forms of the $L$-groups. 

We define a map 
\begin{eqnarray}\label{iota} 
\iota : \GSO(2n,\C) \rtimes \Gamma_{k} \longrightarrow \GL(2n,\C) \times \Gamma_{k} \\
(g,\gamma) \mapsto \begin{cases}
(g,\gamma) & \mbox{if } \gamma_{\vert K} = 1, \\
(h g h^{-1},\gamma) & \mbox{if } \gamma_{\vert K} \not= 1,  
\end{cases}
\end{eqnarray} 
where $\gamma\in\Gamma_{k}$, $g\in\GSO(2n,\C)\subset\GL(2n,\C)$, and 
$h=h^{-1}$ is the matrix 
\[ \left(\begin{matrix}
	I_{n-1} &&& \\
	 & 0 & 1 & \\
	& 1 & 0 & \\
	& & & I_{n-1} \end{matrix}\right). \] 
(We refer to \cite[\S 7.1]{cpss-clay} for more details.) 
The map $\iota$ is an $L$-homomorphism. We also have a compatible family of local 
$L$-homomorphisms $\iota_{v} : {}^{L}\G_{v} \longrightarrow \GL(2n,\C) \times W_v.$ 
Our purpose in this section is to prove the existence of a weak transfer of globally 
generic, cuspidal, automorphic representations of $G=\G(\A)$ to automorphic 
representations of $\GL(2n,\A)$ associated with $\iota$.  

\begin{thm}\label{q-split-weak}  
Let $K/k$ be a quadratic extension of number fields and let $\G=\GSpin^*(2n)$ be as above.  
Let $\psi$ be a nontrivial continuous additive character of $k \backslash \A_k$. The choice 
of $\psi$ and the splitting above defines a non-degenerate 
additive character of $\U(k) \backslash \U(\A)$, again denoted by $\psi$. 

Let $\pi=\otimes_v \pi_v$ be an irreducible, globally $\psi$-generic, cuspidal, automorphic 
representation of $G=\G(\A_k)$. Write $\psi=\otimes_v \psi_v$. 
Let $S$ be a nonempty finite set of non-archimedean 
places $v$ of $k$ such that for every non-archimedean $v \not \in S$ 
both $\pi_v$ and $\psi_v$, as well as $K_{w}/k_{v}$ for $w \vert v$, are unramified. 
Then there exists an automorphic representation $\Pi=\otimes_v \Pi_v$ of $\GL(2n,\A_k)$ 
such that for all $v\not\in S$ the homomorphism parametrizing the local representation $\Pi_v$ is 
given by 
\[ \Phi_v = \iota_{v} \circ \phi_v : W_{k_v} \to \GL(2n,\C), \]
where $W_{k_v}$ denotes the local Weil group of $k_v$ and $\phi_v : W_{k_v} \longrightarrow {}^L\G$ 
is the homomorphism parametrizing $\pi_v$. Moreover, if $\omega_\Pi$ and $\omega_\pi$  
denote the central characters of $\Pi$ and $\pi$, respectively, then $\omega_\Pi = \omega_\pi^n \mu$, 
where $\mu$ is a nontrivial quadratic id\`ele class character. 
Furthermore, $\Pi$ and $\w{\Pi}\otimes\omega_\pi$ are nearly equivalent. 
\end{thm}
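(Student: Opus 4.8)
The plan is to reproduce the argument carried out for the split groups in \cite{duke}, the point being that the two local ingredients that were missing for non-split groups are now available: the stability of $\gamma$-factors (Proposition \ref{stab}) and the control of local $L$-factors through normalized intertwining operators (Proposition \ref{nice}). First I would assemble a candidate representation $\Pi=\otimes_v\Pi_v$ of $\GL(2n,\A_k)$. For every place $v\notin S$ — which includes all but finitely many non-archimedean places and all archimedean places, where the local Langlands correspondence is available for both $\GL(2n,k_v)$ and $\G(k_v)$ — let $\Pi_v$ be the irreducible admissible representation with Langlands parameter $\Phi_v=\iota_v\circ\phi_v$. For the finitely many $v\in S$, where no local correspondence for $\G$ is at hand, choose $\Pi_v$ essentially arbitrarily among irreducible generic representations of $\GL(2n,k_v)$ whose central character is $\omega_{\pi_v}^{\,n}\mu_v$. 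One checks that the resulting $\Pi$ is an irreducible admissible representation of $\GL(2n,\A_k)$ with a Hecke central character $\omega_\Pi$.

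Next I would invoke the Langlands--Shahidi $L$- and $\epsilon$-factors for $\G\times\GL(m)$, obtained by realizing $\GL(m)\times\G$ as a maximal Levi subgroup of a larger quasi-split general spin group, so that for every unitary cuspidal automorphic representation $\tau$ of $\GL(m,\A_k)$, $1\le m\le 2n-1$, one has the completed $L$-function $L(s,\pi\times\tau)$ with its functional equation $L(s,\pi\times\tau)=\epsilon(s,\pi\times\tau)\,L(1-s,\w{\pi}\times\w{\tau})$. I would then twist $\tau$ by an id\`ele class character $\eta$ that is highly ramified at the places of $S$. By Proposition \ref{nice}, together with the matching of Shahidi's archimedean factors with the Artin factors of the parameters, $L(s,\pi\times(\tau\otimes\eta))$ is entire and bounded in vertical strips, i.e.\ ``nice''; and by the stability of $\gamma$-factors (Proposition \ref{stab}) at the places in $S$, combined with the explicit unramified computation at $v\notin S$ identifying $\gamma(s,\pi_v\times\tau_v,\psi_v)$ with the factor attached to $\Phi_v$, one obtains the place-by-place equalities $L(s,\pi\times(\tau\otimes\eta))=L(s,\Pi\times(\tau\otimes\eta))$ and $\epsilon(s,\pi\times(\tau\otimes\eta))=\epsilon(s,\Pi\times(\tau\otimes\eta))$. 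Hence the twisted family of Rankin--Selberg $L$-functions attached to $\Pi$ is nice, and an appropriate version of the converse theorem of Cogdell and Piatetski-Shapiro — the one that allows a fixed highly ramified twist at $S$ and requires niceness for $m\le 2n-1$ — produces an automorphic representation, again denoted $\Pi$, of $\GL(2n,\A_k)$ with $\Pi_v$ parametrized by $\Phi_v$ for all $v\notin S$. This is the desired weak transfer.

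There remain the statements about $\omega_\Pi$ and about the near equivalence of $\Pi$ with $\w{\Pi}\otimes\omega_\pi$. At each $v\notin S$ one reads off from the root datum of $\G$ and the explicit form of $\iota$ that $\det\Phi_v=\omega_{\pi_v}^{\,n}\mu_v$: the standard representation of $\GSO(2n,\C)$ contributes the $n$-th power of the similitude character, which pulls back along $\phi_v$ to $\omega_{\pi_v}$, while the nontrivial Galois action in ${}^L\G$ accounts for the quadratic character $\mu_v$. Since $\omega_\Pi$ and $\omega_\pi^{\,n}\mu$ are Hecke characters agreeing at almost all places, they coincide. Similarly, the standard representation $r$ of $\GSO(2n,\C)$ satisfies $r^\vee\cong r\otimes\lambda^{-1}$, where $\lambda$ is the similitude character; composing with $\phi_v$ gives $\w{\Pi_v}\cong\Pi_v\otimes\omega_{\pi_v}^{-1}$, so $\Pi_v\cong\w{\Pi_v}\otimes\omega_{\pi_v}$ for every $v\notin S$, whence $\Pi$ and $\w{\Pi}\otimes\omega_\pi$ are nearly equivalent.

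The step I expect to be the main obstacle is the one in the second paragraph: arranging that, after the highly ramified twist, the local $L$- and $\epsilon$-factors at the bad non-archimedean places $v\in S$ become independent of the arbitrary choices of $\Pi_v$ and equal to those prescribed by $\Phi_v$, uniformly enough to yield the global identity of the twisted $L$-functions. This is precisely where Propositions \ref{stab} and \ref{nice} are indispensable, and it is the obstruction that kept the non-split case out of reach in \cite{duke}; establishing that the Langlands--Shahidi $L$-functions are nice — in particular bounded in vertical strips — is also a delicate ingredient, handled as in \cite{duke}.
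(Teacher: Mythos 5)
Your proposal follows essentially the same route as the paper: construct a candidate $\Pi$ via the local parameters $\iota_v\circ\phi_v$ at good places and arbitrary local representations with the prescribed central character at $S$, use stability of $\gamma$-factors (Proposition~\ref{stab}) and the holomorphy/boundedness input (Proposition~\ref{nice}) to match twisted $L$- and $\epsilon$-factors after a highly ramified twist, and feed these into the Cogdell--Piatetski-Shapiro converse theorem. The only inessential deviations are that the paper does not insist that the arbitrary local choices at $v\in S$ be generic, and the paper derives the central-character and near-equivalence statements from an explicit unramified Satake-parameter computation (Proposition~\ref{omega-tilde}) rather than from the abstract self-duality $r^\vee\cong r\otimes\lambda^{-1}$ of the standard representation of $\GSO(2n,\C)$ that you invoke; the content is the same.
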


\begin{rem}
We proved an analogous result for the split groups $\GSpin(2n+1)$ and $\GSpin(2n)$ in 
\cite{duke}.
\end{rem}

To prove the theorem we will use a suitable version of the converse theorems of Cogdell 
and Piatetski-Shapiro \cite{cps1,cps2}. 
The exact version we need can be found in \cite[\S 2]{cpss-clay} which we quickly review 
below. Next we introduce an irreducible, admissible 
representation $\Pi$ of $\GL(2n,\A)$ as a candidate for the transfer of $\pi$. We then 
prove that $\Pi$ satisfies the required conditions of the converse theorem and hence is  
automorphic. Along the way we also verify the remaining properties of $\Pi$ stated 
in Theorem \ref{q-split-weak}.

\subsection{The Converse Theorem}

Let $k$ be a number field and fix a non-empty finite set $S$ of non-archimedean places of $k$. 
For each integer $m$ let 
\[ \mathcal A_{0}(m) = \left\{\tau | \tau \text{ is a cuspidal representation of } \GL(m,\A_{k}) \right\} \]
and 
\[ \mathcal A^{S}_{0}(m) = \left\{ \tau \in \mathcal A_{0}(m) | \tau_{v} \text{ is unramified 
for all} v \in S\right\}. \] 
Also, for a positive integer $N$ let 
\[ \mathcal T(N-1) = \coprod_{m=1}^{N-1} \mathcal A_{0}(m) \quad \text{and} \quad 
\mathcal T^{S}(N-1) = \coprod_{m=1}^{N-1} \mathcal A^{S}_{0}(m) \] 
and for $\eta$ a continuous character of $k^{\times}\backslash \A^{\times}_{k}$ let 
\[ \mathcal T(S;\eta) = \mathcal T^{S}(N-1)\otimes \eta = 
\left\{ \tau = \tau'\otimes\eta | \tau' \in\mathcal T^{S}(N-1)\right\}. \]
For our purposes we will apply the following theorem with $N=2n$. 

\begin{thm}\label{converse}(Converse theorem of Cogdell and Piatetski-Shapiro) 
Let $\Pi = \otimes \Pi_{v}$ be an irreducible, admissible representation of 
$\GL(N,\A_{k})$ whose central character $\omega_{\Pi}$ is invariant under $k^{\times}$ 
and whose $L$-function 
\[ L(s,\Pi) = \prod_{v} L(s,\Pi_{v}) \] 
is absolutely convergent in 
some right half plane. Let $S$ be a finite set of non-archimedean places of $k$ and let 
$\eta$ be a continuous character of $k^{\times}\backslash \A^{\times}$. Suppose that 
for every $\tau \in \mathcal T(S;\eta)$ the $L$-function $L(s,\tau\times\Pi)$ is {\it nice}, 
i.e., it satisfies the following three conditions: 
\begin{itemize}
\item[(1)] $L(s,\tau\times\Pi)$ and $L(s,\w\tau\times\w\Pi)$ extend to entire functions of $s \in \C$.  
\item[(2)] $L(s,\tau\times\Pi)$ and $L(s,\w\tau\times\w\Pi)$ are bounded in vertical strips. 
\item[(3)] The functional equation $L(s,\tau\times\Pi)=\epsilon(s,\tau\times\Pi) L(s,\w\tau\times\w\Pi)$ 
holds.
\end{itemize} 
Then there exists an automorphic representation $\Pi'$ of $\GL(N,\A_{k})$ such that 
$\Pi_{v} \cong \Pi'_{v}$ for all $v \not\in S$. 
\hfill$\Box$
\end{thm}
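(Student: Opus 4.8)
The plan is to reproduce, in outline, the Hecke–Weil–Jacquet–Langlands argument in the form developed for $\GL_{N}$ by Cogdell and Piatetski-Shapiro: manufacture an automorphic form out of $\Pi$, and show that the niceness of the twisted Rankin–Selberg $L$-functions is exactly what forces that form to be genuinely automorphic.

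First I would build the candidate. The local factors $L(s,\tau\times\Pi_{v})$ and $\epsilon(s,\tau\times\Pi_{v})$ in the hypotheses are the Jacquet–Piatetski-Shapiro–Shalika factors, which presuppose that each $\Pi_{v}$ is generic; so fix a nontrivial character $\psi=\otimes_{v}\psi_{v}$ of $k\backslash\A_{k}$ and Whittaker models $\mathcal{W}(\Pi_{v},\psi_{v})$, and for a pure tensor $\varphi=\otimes_{v}\varphi_{v}$ in the space of $\Pi$ put $W_{\varphi}(g)=\prod_{v}W_{\varphi_{v}}(g_{v})$. Form the Poincar\'e-type series
\[ U_{\varphi}(g)=\sum_{\gamma\in N_{N-1}(k)\backslash\GL_{N-1}(k)} W_{\varphi}\!\left(\begin{pmatrix}\gamma & \\ & 1\end{pmatrix}g\right), \]
with $\GL_{N-1}\hookrightarrow\GL_{N}$ via $g\mapsto\mathrm{diag}(g,1)$ and $N_{N-1}$ its standard maximal unipotent. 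Gauge (Jacquet–Shalika) estimates for Whittaker functions, together with the assumed absolute convergence of $L(s,\Pi)$ in a half-plane, which bounds the data of $\Pi_{v}$ at the finitely many bad places, show the sum converges to a smooth function of uniform moderate growth on $P_{N}(k)\backslash\GL_{N}(\A_{k})$, where $P_{N}$ is the mirabolic subgroup. By construction $U_{\varphi}$ is left-invariant under $N_{N}(k)$ ($\psi$ being trivial there) and under $\GL_{N-1}(k)$ (re-index the sum), hence under all of $P_{N}(k)$. Now fix a Weyl element $\alpha$ with $\langle P_{N}(k),\alpha\rangle=\GL_{N}(k)$; since $U_{\varphi}$ is already $P_{N}(k)$-invariant, it is $\GL_{N}(k)$-invariant precisely when $U_{\varphi}(g)=U_{\varphi}(\alpha g)$ for all $\varphi$ and $g$. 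Transporting the right-hand side through the outer involution $g\mapsto {}^{t}g^{-1}$, which carries $\Pi$ to $\w{\Pi}$ and $\psi$ to $\psi^{-1}$, this becomes an identity $U_{\varphi}=V_{\varphi}$ between $U_{\varphi}$ and the analogous series $V_{\varphi}$ built from $\w{\Pi}$.

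The heart of the matter is to deduce $U_{\varphi}=V_{\varphi}$ from the hypotheses. For each cuspidal $\tau$ on $\GL_{m}(\A_{k})$, $m\le N-1$, lying in $\mathcal{T}(S;\eta)$, integrate $U_{\varphi}$ against a cusp form $\phi$ in the space of $\tau$ over $\GL_{m}(k)\backslash\GL_{m}(\A_{k})$ with an extra factor $|\det|^{s-1/2}$. Unfolding the Poincar\'e series collapses this into a single adelic integral which factors over the places as an Euler product of local JPSS zeta integrals $\Psi_{v}(s,W_{\varphi_{v}},W_{\phi_{v}})$; dividing through by $L(s,\tau\times\Pi)$ produces a holomorphic function of $s$ (the local integrals being holomorphic multiples of the local $L$-factors), while the parallel computation for $V_{\varphi}$ yields a holomorphic multiple of $L(1-s,\w\tau\times\w\Pi)$. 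Conditions (1)–(3) — entireness, boundedness in vertical strips, and the functional equation with $\epsilon(s,\tau\times\Pi)$ — combined with the known local functional equations relating the $\Psi_{v}$ at $s$ and $1-s$, then force the two global integrals to coincide for all $s$, all $\tau\in\mathcal{T}(S;\eta)$, and all test data. An inductive Fourier expansion of $U_{\varphi}-V_{\varphi}$ along the unipotent radicals of the maximal parabolics $\GL_{m}\times\GL_{N-m}$, run with $m$ decreasing from $N-1$ and using cuspidality of $\tau$ at each stage to annihilate all the resulting generic coefficients, yields $U_{\varphi}=V_{\varphi}$ identically, hence the desired $\GL_{N}(k)$-invariance. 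Because only the twisted family $\mathcal{T}(S;\eta)$ is at our disposal, one must also invoke that twisting by a character highly ramified at the places of $S$ does not destroy information about the components away from $S$; this is precisely the source of the caveat that the conclusion holds only for $v\notin S$.

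Finally, with $U_{\varphi}$ now left $\GL_{N}(k)$-invariant for every $\varphi$, the closed span of the right translates of $U_{\varphi}$ is a subspace of the automorphic forms on $\GL_{N}(\A_{k})$; let $\Pi'$ be the representation it generates. Since $W_{\varphi_{v}}$ is recovered as the $\psi_{v}$-Whittaker–Fourier coefficient of $U_{\varphi}$ along $N_{N}$, uniqueness of Whittaker models and multiplicity one for generic representations of $\GL_{N}(k_{v})$ identify $\Pi'_{v}$ with $\Pi_{v}$ for each $v\notin S$, completing the proof. I expect the main obstacle to be the passage carried out in the third paragraph: turning the purely analytic input — niceness of all the twisted $L$-functions — into the geometric statement $U_{\varphi}=V_{\varphi}$, i.e.\ bootstrapping invariance under the mirabolic together with one Weyl element up to invariance under all of $\GL_{N}(k)$. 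This is exactly where the converse of Hecke theory lies, the delicate points being the inductive Fourier-expansion argument and the exact matching of the global functional equations with the local zeta integrals; the restriction to $\mathcal{T}(S;\eta)$ rather than the full $\mathcal{T}(N-1)$ is what makes this subtle and accounts for the loss of control at the places in $S$.
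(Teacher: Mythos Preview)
Your outline is a reasonable sketch of the Cogdell--Piatetski-Shapiro argument itself, but note that the paper does not prove this theorem at all: the statement is followed immediately by \hfill$\Box$, i.e.\ it is quoted as a black box from \cite{cps1,cps2} (in the form recorded in \cite[\S 2]{cpss-clay}). The authors only \emph{use} the converse theorem; they do not reproduce its proof. So there is nothing in the paper to compare your argument against beyond the bare citation.

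That said, your sketch is broadly faithful to the original CPS strategy: build the mirabolic-invariant function $U_\varphi$ from Whittaker data, reduce $\GL_N(k)$-invariance to the single identity $U_\varphi=V_\varphi$, and deduce that identity from the analytic control of the twisted Rankin--Selberg $L$-functions via unfolding and the local functional equations. A few points where your outline is looser than the actual proof: the passage from ``all Rankin--Selberg integrals against $\tau\in\mathcal T(S;\eta)$ agree'' to $U_\varphi=V_\varphi$ is not literally an inductive Fourier expansion annihilating generic coefficients, but rather a weak-to-strong argument showing that a $P_N(k)$-invariant function on $\GL_N(\A)$ whose projections to all cuspidal $\tau$ (in the restricted family) vanish must itself vanish; and the role of $S$ and $\eta$ is more structural than you indicate---one works with vectors fixed under a compact open at $S$ and uses that $\mathcal T^S(N-1)\otimes\eta$ still spans enough of the cuspidal spectrum to force the identity, which is why the conclusion only matches $\Pi_v$ with $\Pi'_v$ for $v\notin S$. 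But as an outline aimed at the right target, yours is correct; the paper simply takes the theorem for granted.
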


The twisted $L$- and $\epsilon$-factors in the statement are those in \cite{cps1}. In particular, 
they are Artin factors and known to be the same as the ones coming from the Langlands-Shahidi 
method at all places. 

\subsection{$L$-functions for $\GL(m) \times \GSpin^{*}(2n)$}

Let $\pi$ be an irreducible, admissible, globally generic representation of $\GSpin^{*}(2n,\A_{k})$ 
and let $\tau$ be a cuspidal representation of $\GL(m, \A_{k})$ with $m \ge 1$. The group 
$\GSpin^{*}(2(m+n))$ has a standard maximal Levi $\GL(m) \times \GSpin^{*}(2n)$ and we have 
the completed $L$-functions 
\[ L(s,\tau \times \pi) = \prod_{v} L(s,\tau_{v}\times\pi_{v}) 
= \prod_{v} L(s,\tau_{v}\otimes\pi_{v}, \iota'_{v}\otimes\iota_v) = L(s, \tau\otimes\pi, \iota'\otimes\iota), \] 
with similar $\epsilon$- and $\gamma$-factors, 
defined via the Langlands-Shahidi method in \cite{shahidi1990-annals}.
Here, $\iota$ is the representation of the $L$-group of $\GSpin^{*}(2n)$ we described before 
and $\iota'$ is the projection map onto the first factor in the $L$-group 
${}^{L}\GL(m) = \GL(m,\C) \times W_{k}$. 

\begin{prop}\label{nice}
Let $S$ be a non-empty finite set of finite places of $k$ and let $\eta$ be a character of 
$k^{\times}\backslash \A^{\times}_{k}$ such that, for some $v \in S$, $\eta_{v}^{2}$ is ramified. 
Then for all $\tau \in \mathcal T(S;\eta)$ the $L$-function $L(s,\tau \times \pi)$ is nice, 
i.e., it satisfies the following three conditions: 
\begin{itemize}
\item[(1)] $L(s,\tau\times\pi)$ and $L(s,\w\tau\times\w\pi)$ extend to entire functions of $s \in \C$.  
\item[(2)] $L(s,\tau\times\pi)$ and $L(s,\w\tau\times\w\pi)$ are bounded in vertical strips. 
\item[(3)] The functional equation $L(s,\tau\times\pi)=\epsilon(s,\tau\times\pi) L(s,\w\tau\times\w\pi)$ 
holds.
\end{itemize} 
\end{prop}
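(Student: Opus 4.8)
The plan is to establish the three properties of $L(s,\tau\times\pi)$ by combining the Langlands--Shahidi method with the two local tools announced in the introduction. First I would fix $\tau \in \mathcal{T}(S;\eta)$ and realize $L(s,\tau\times\pi)$, together with its $\epsilon$- and $\gamma$-factors, as coming from the constant term and intertwining operators attached to the maximal Levi $\GL(m)\times\GSpin^*(2n)$ inside $\GSpin^*(2(m+n))$, exactly as in \cite{shahidi1990-annals}. The functional equation (3) is then automatic from the general theory of the Langlands--Shahidi method, since it is built into the definition of the completed $L$-function via the crude functional equation of the Eisenstein series and the multiplicativity of $\gamma$-factors; this step requires no new input. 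The substantive work is in (1) and (2).

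For holomorphy (1), the strategy is the standard one: the only possible poles of $L(s,\tau\times\pi)$ come from poles of the relevant Eisenstein series on $\GSpin^*(2(m+n),\A)$, and these are controlled by the location of the poles of the global intertwining operator. One reduces, via the theory of Eisenstein series and Langlands' square-integrability criterion, to ruling out poles in $\Re(s)\ge 1/2$ (after normalization) coming from the residual spectrum, and the possible pole at the edge is where the hypothesis on $\eta$ enters. Here I would invoke Proposition \ref{nice}'s hypothesis that $\eta_v^2$ is ramified for some $v\in S$: twisting by such a highly ramified $\eta$ forces the local component $\tau_v$ to be ramified in a way that makes the corresponding local normalized intertwining operator holomorphic and nonzero, so no global pole can survive. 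This is precisely where one needs the result of Wook Kim's thesis \cite{w-kim} (the quasi-split non-split analogue of the split-case input on local $L$-functions and normalized intertwining operators) — without it one cannot control the local factors at the ramified place. The stability of $\gamma$-factors of \cite{cpss-stab} enters here as well, guaranteeing that the relevant local $\gamma$-factor after the highly ramified twist depends only on central data, hence is entire and nonvanishing.

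For boundedness in vertical strips (2), the plan is to appeal to the general result that completed automorphic $L$-functions arising from the Langlands--Shahidi method are bounded in vertical strips away from their poles — this follows from the corresponding property of Eisenstein series together with the fact, just established in (1), that the function is entire. Concretely one uses Gelbart--Shahidi-type arguments bounding the Eisenstein series uniformly on vertical strips and transfers this to the $L$-function via the constant-term formula; the ramified twist again ensures there are no boundary contributions to worry about. The same arguments applied to $\w\tau\times\w\pi$ (note $\w\tau$ also lies in a set of the form $\mathcal{T}(S;\eta')$ with $\eta'{}^2$ ramified at $v$) give the dual statements in (1) and (2).

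The main obstacle, and the reason this could not be done at the time of \cite{duke}, is the local analysis at the places in $S$ in the quasi-split non-split setting: one needs holomorphy and non-vanishing of the normalized local intertwining operators and the stabilized form of the local $\gamma$-factors for $\GL(m)\times\GSpin^*(2n)$, and both of these rest on \cite{w-kim} and \cite{cpss-stab}. Once those local inputs are in hand, the global argument is the routine Langlands--Shahidi machinery; I would present the local reductions carefully and cite the global boundedness and holomorphy framework rather than reprove it.
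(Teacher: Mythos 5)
Your outline follows the paper's own route closely on (2) and (3): boundedness is cited from Gelbart--Shahidi (the paper uses \cite[Cor.~4.5]{gelbart-shahidi}), and the functional equation is the Langlands--Shahidi crude functional equation \cite[Thm.~7.7]{shahidi1990-annals}. Your treatment of (1) is also in the right spirit: the paper does not re-derive holomorphy from scratch but cites \cite[Prop.~2.1]{kim-shahidi-annals}, whose hypothesis is precisely the twist by $\eta$ with $\eta_v^2$ ramified, and whose proof rests on Assumption~1.1 there (Assumption~A of \cite{kim-israelJ}), i.e.\ holomorphy and non-vanishing of the relevant normalized local intertwining operators; this is where the standard modules conjecture and tempered $L$-function conjecture, verified for $\GSpin$ groups in \cite{w-kim} (and in general in \cite{hei-opd-ppt}), enter. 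So your emphasis on the local intertwining operators and on Wook Kim's thesis is correct.

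However, you have misplaced one ingredient. You claim that the stability of $\gamma$-factors from \cite{cpss-stab} ``enters here as well'' in the proof of (1). It does not. Stability of $\gamma$-factors plays no role in establishing that $L(s,\tau\times\pi)$ is nice; it is used later, in Proposition~\ref{stab} and then Proposition~\ref{equality-ram}, to show that the local $L$- and $\epsilon$-factors of the arbitrarily chosen ramified local components $\Pi_v$ of the candidate transfer match those of $\pi_v$ after a highly ramified twist, so that the converse theorem can be applied to $\Pi$. Proposition~\ref{nice} is a statement purely about $L(s,\tau\times\pi)$ (no $\Pi$ in sight) and its proof needs only the Langlands--Shahidi machinery plus the Kim--Shahidi holomorphy criterion and Assumption~A; pulling in \cite{cpss-stab} here conflates two separate steps of the overall argument.
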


\begin{proof}
Twisting by $\eta$ is necessary for conditions (1) and (2). Both (2) and (3) hold in wide generality. 

Condition (2) follows from \cite[Cor. 4.5]{gelbart-shahidi} and is valid for all $\tau \in \mathcal T(N-1)$, provided 
that one removes neighborhoods of the finite number of possible poles of the $L$-function. 
Condition (3) is a consequence of \cite[Thm. 7.7]{shahidi1990-annals} and is valid for 
all $\tau\in\mathcal T(N-1)$. 

Condition (1) follows from a more general result, \cite[Prop. 2.1]{kim-shahidi-annals}. Note that 
this result rests on Assumption 1.1 of \cite{kim-shahidi-annals}, 
sometimes called Assumption A \cite{kim-israelJ},  
on certain normalized intertwining operators being holomorphic and non-zero. Fortunately the 
assumption has been verified in our cases. The assumption requires two ingredients: the so-called 
``standard modules conjecture'' and the ``tempered $L$-functions conjecture''. Both of these 
have been verified in our cases in Wook Kim's thesis \cite{w-kim}. For results proving various cases 
of this assumption we refer to \cite{shahidi1990-annals, cas-sha, muic-shahidi, muic, asgari, 
kim2005, hei-muic2007, kim-kim-shahidi-vol}. Recently V. Heiermann and E. Opdam have proved 
the assumption in full generality in \cite{hei-opd-ppt}.   
\end{proof}

The key now is to relate the $L$-functions $L(s,\tau \times \pi)$, defined via the Langlands-Shahidi 
method, to the $L$-functions $L(s,\Pi\times\tau)$ in the converse theorem. We note that, when we 
introduce our candidate for $\Pi$, for archimedean places and those non-archimedean places at 
which all data are unramified we know the equality of the local $L$-functions. However, we do not 
know this to be the case for ramified places. We get around this problem through stability of 
$\gamma$-factors, which basically makes the choice of local components of $\Pi$ at the ramified 
places irrelevant as long as we can twist by highly ramified characters. 

\subsection{Stability of $\gamma$-factors}
In this subsection let $F$ denote a non-archimedean local field of characteristic zero. 
Let $G=\GSpin^{*}(2n,F)$, where the quasi-split non-split group is associates with a 
quadratic extension $E/F$. 

Fix a nontrivial additive character $\psi$ of $F$. Let $\pi$ be an irreducible, admissible, 
$\psi$-generic representation of $G$ and let $\eta$ denote a continuous character of $\GL(1,F)$. 
Let $\gamma(s,\eta\times\pi,\psi)$ be the associated $\gamma$-factor defined via the 
Langlands-Shahidi method \cite[Theorem 3.5]{shahidi1990-annals}. We have 
\[ \gamma(s,\eta\times\pi,\psi) = 
\frac{\epsilon(s,\eta\times\pi,\psi)L(1-s,\eta^{-1}\times\w\pi)}{L(s,\eta\times\pi)}. \] 

\begin{prop}\label{stab}
Let $\pi_{1}$ and $\pi_{2}$ be two irreducible, admissible, $\psi$-generic representations of $G$ 
having the same central characters. Then for a suitably highly ramified character $\eta$ of $\GL(1,F)$ 
we have 
\[ \gamma(s,\eta\times\pi_{1},\psi_{v}) = \gamma(s,\eta\times\pi_{2},\psi_{v}). \]  
\end{prop}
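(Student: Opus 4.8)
The plan is to reduce the statement to a known stability result for $\gamma$-factors of the Langlands--Shahidi type. First I would recall that $\gamma(s,\eta\times\pi_i,\psi)$ arises from the maximal parabolic of $\GSpin^{*}(2(m+n))$ with Levi $\GL(m)\times\GSpin^{*}(2n)$ in the case $m=1$, so we are dealing with a single relevant $\gamma$-factor attached to a generic representation of a quasi-split group and a character twist. The key input is the recent work of Cogdell, Piatetski-Shapiro and Shahidi \cite{cpss-stab} establishing stability of $\gamma$-factors in great generality for the Langlands--Shahidi method, including quasi-split non-split groups; indeed this is precisely the ingredient the introduction flags as having been unavailable when \cite{duke} was written. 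So the first step is to verify that the hypotheses of the stability theorem in \cite{cpss-stab} are met in our setting: the group $G=\GSpin^{*}(2n,F)$ over a non-archimedean local field of characteristic zero, the parabolic data above, and two $\psi$-generic representations $\pi_1,\pi_2$ with equal central characters.

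Second, I would spell out why equality of central characters is the right normalization. The $\gamma$-factor attached to $\eta\times\pi$ depends on $\pi$ through its central character in a way that does not stabilize under twisting — a highly ramified $\eta$ kills the dependence on all the ``interior'' data of $\pi$ but the central character survives and contributes to the $\epsilon$-factor. Thus the correct statement is that for $\eta$ sufficiently ramified (depending on $\pi_1$, $\pi_2$ and $\psi$), $\gamma(s,\eta\times\pi_i,\psi)$ depends only on $s$, $\psi$, $\eta$, and $\omega_{\pi_i}$; since $\omega_{\pi_1}=\omega_{\pi_2}$ by hypothesis, the two $\gamma$-factors coincide. I would cite the relevant statement of \cite{cpss-stab} giving this conductor-dependent bound on the ramification of $\eta$.

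The main obstacle — really the only nontrivial point, since the heavy lifting is in \cite{cpss-stab} — is checking that the general stability framework genuinely applies to $\GSpin^{*}(2n)$, i.e.\ that this case is among those covered. This amounts to confirming that the pair $(\GSpin^{*}(2(m+n)),\GL(1)\times\GSpin^{*}(2n))$ sits in the class of parabolic data for which \cite{cpss-stab} proves stability (it is of the same ``orthogonal-type'' shape as the $\SO$ and $\Sp$ cases, with the $\GSpin$ groups differing only by a central torus), and that the generic representations in question satisfy any hypotheses (such as genericity with respect to the fixed $\psi$, which is built into the statement) needed there. Once that verification is in place, the proposition follows immediately by taking $\eta$ ramified enough to stabilize both $\gamma(s,\eta\times\pi_1,\psi)$ and $\gamma(s,\eta\times\pi_2,\psi)$ simultaneously and invoking the equality of central characters.
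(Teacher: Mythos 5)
Your proposal is correct and takes essentially the same route as the paper: both reduce to the main stability theorem of \cite{cpss-stab} applied to the (self-associate) maximal Levi $\GL(1)\times\GSpin^{*}(2n)$ inside $\GSpin^{*}(2n+2)$, with the only work being to check that this parabolic datum satisfies the hypotheses of that theorem. The paper's proof is just a two-sentence citation; your elaboration on why the central character is the surviving invariant is consistent with that and does not diverge from it.
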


\begin{proof}
This is a special case of a more general theorem which is the main result of \cite{cpss-stab}. 
We note that in our case one has to apply that theorem to the self-associate maximal Levi 
subgroup $\GL(1)\times\GSpin^{*}(2n)$ in $\GSpin^{*}(2n+2)$ which does satisfy the assumptions 
of that theorem. 
\end{proof}

\subsection{The Candidate Transfer}
We construct a candidate global transfer $\Pi = \otimes_{v} \Pi_{v}$ as a restricted tensor product of its 
local components $\Pi_{v}$, irreducible, admissible representations of $\GL(2n,k_{v})$. 
There are three cases to consider: (i) archimedean $v$, (ii) non-archimedean unramified $v$, 
(iii) non-archimedean ramified $v$. 

\subsubsection{The archimedean transfer}\label{candidate-inf} 
If $v$ is an archimedean place of $k$, then by 
the local Langlands correspondence \cite{langlands-real, borel-corvallis} the representation 
$\pi_{v}$ is parametrized by an admissible homomorphism $\phi_{v}$ and we choose $\Pi_{v}$ 
to be the irreducible, admissible representation of $\GL(2n,k_{v})$ parametrized by $\Phi_{v}$ as 
in the statement of Theorem \ref{q-split-weak}. We then have 
\begin{equation} \label{L-inf}
L(s,\pi_{v}) = L(s,\iota_{v}\circ\phi_{v}) = L(s,\Pi_{v}) 
\end{equation}
and 
\begin{equation} \label{e-inf}
\epsilon(s,\pi_{v},\psi_{v}) = \epsilon(s,\iota_{v}\circ\phi_{v},\psi_{v}) 
= \epsilon(s,\Pi_{v},\psi_{v}), 
\end{equation} 
where the middle factors are the local Artin-Weil $L$- and $\epsilon$-factors attached to 
representations of the Weil group as in \cite{tate}. The other $L$- and $\epsilon$-factors 
are defined via the Langlands-Shahidi method which, in the archimedean case, are known to 
be the same as the Artin factors defined through the arithmetic Langlands classification 
\cite{shahidi:duke85}. 

If $\tau_{v}$ is an irreducible, admissible representation of $\GL(m,k_{v})$, then it is 
parametrized by an admissible homomorphism $\phi'_{v}:W_{k_{v}}\longrightarrow \GL(m,\C)$ 
and the tensor product homomorphism 
$(\iota_{v} \circ \phi_{v}) \otimes (\iota'_{v}\circ\phi'_{v}) : W_{k_{v}} \longrightarrow \GL(2mn,\C)$ 
is another admissible homomorphism and we again have 
\begin{equation} \label{L-inf-tau} 
L(s,\pi_{v}\times \tau_{v}) = L(s,(\iota_{v} \circ \phi_{v}) \otimes (\iota'_{v}\circ\phi'_{v})) 
= L(s,\Pi_{v} \times \tau_{v}) 
\end{equation}
and 
\begin{equation} \label{e-inf-tau} 
\epsilon(s,\pi_{v} \times \tau_{v},\psi_{v}) = 
\epsilon(s,(\iota_{v} \circ \phi_{v}) \otimes (\iota'_{v}\circ\phi'_{v}),\psi_{v}) 
= \epsilon(s,\Pi_{v} \times \tau_{v},\psi_{v}). 
\end{equation}
Here, $\iota_{v}'$ is just the identity map on $\GL(m,\C)$. 
Hence, we get the following matching of the twisted local $L$- and $\epsilon$-factors. 

\begin{prop}\label{equality-inf}
Let $v$ be an archimedean place of $k$ and let $\pi_{v}$ be an irreducible, admissible, 
generic representation of $\GSpin^{*}(2n,k_{v})$, $\Pi_{v}$ its local functorial transfer 
to $\GL(2n,k_{v})$, and $\tau_{v}$ an irreducible, admissible, generic representation 
of $\GL(m,k_{v})$. Then 
\[ L(s,\pi_{v} \times \tau_{v}) = L(s, \Pi_{v} \times \tau_{v}), \quad  
L(s,\w{\pi}_{v} \times \w{\tau}_{v}) = L(s, \w{\Pi}_{v} \times \w{\tau}_{v}) \] 
and 
\[ \epsilon(s,\pi_{v} \times \tau_{v}, \psi_{v}) = \epsilon(s,\Pi_{v} \times \tau_{v}, \psi_{v}). \]
\hfill$\Box$
\end{prop}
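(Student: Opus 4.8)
The plan is to reduce everything to the already-established correspondence between Langlands--Shahidi factors and Artin--Weil factors at archimedean places. First I would invoke the local Langlands correspondence for archimedean fields \cite{langlands-real, borel-corvallis} to parametrize $\pi_v$ by an admissible homomorphism $\phi_v : W_{k_v} \to {}^L\G_v$ and $\tau_v$ by $\phi'_v : W_{k_v} \to \GL(m,\C)$. Composing with the $L$-embeddings $\iota_v$ and the identity $\iota'_v$, I obtain an admissible homomorphism $(\iota_v \circ \phi_v) \otimes (\iota'_v \circ \phi'_v) : W_{k_v} \to \GL(2mn,\C)$; by definition $\Pi_v$ is the representation of $\GL(2n,k_v)$ attached to $\iota_v \circ \phi_v$ via archimedean local Langlands, so $\Pi_v \times \tau_v$ corresponds to exactly this tensor product homomorphism.

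The next step is to match the two kinds of factors. On the Langlands--Shahidi side, the $L$- and $\epsilon$-factors $L(s, \pi_v \times \tau_v)$ and $\epsilon(s, \pi_v \times \tau_v, \psi_v)$ are those defined in \cite{shahidi1990-annals} for the relevant maximal Levi $\GL(m) \times \GSpin^*(2n)$ inside $\GSpin^*(2(m+n))$. At archimedean places these agree, by \cite{shahidi:duke85}, with the Artin--Weil factors of the representation of $W_{k_v}$ given by the associated admissible homomorphism; that homomorphism is precisely $(\iota_v \circ \phi_v) \otimes (\iota'_v \circ \phi'_v)$. On the $\GL \times \GL$ side, the Rankin--Selberg $L$- and $\epsilon$-factors $L(s, \Pi_v \times \tau_v)$ and $\epsilon(s, \Pi_v \times \tau_v, \psi_v)$ likewise coincide at archimedean places with the Artin--Weil factors of the same tensor product homomorphism, by the compatibility of the archimedean local Langlands correspondence with Rankin--Selberg factors. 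Chaining these identifications through the common middle term gives the three displayed equalities; the contragredient statement follows by applying the same reasoning to $\w\pi_v$, $\w\tau_v$ (whose parametrizing homomorphisms are the duals of $\phi_v$, $\phi'_v$) together with $\w\Pi_v$.

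I would organize this as: (i) record the parametrizations and the definition of $\Pi_v$; (ii) cite the equalities \eqref{L-inf}, \eqref{e-inf} and then \eqref{L-inf-tau}, \eqref{e-inf-tau} already assembled in the text, which are exactly the statement for $\pi_v$, $\tau_v$ and their transfers; (iii) repeat verbatim for the contragredients. Since \eqref{L-inf-tau} and \eqref{e-inf-tau} already assert $L(s,\pi_v\times\tau_v) = L(s,\Pi_v\times\tau_v)$ and the matching $\epsilon$-identity, the proposition is essentially a restatement, so the proof is short: it amounts to pointing to the displayed equations and noting that the dual homomorphisms parametrize $\w\pi_v$, $\w\tau_v$, $\w\Pi_v$.

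The only genuine subtlety — and the step I would be most careful about — is the bookkeeping that the tensor-product homomorphism $(\iota_v \circ \phi_v) \otimes (\iota'_v \circ \phi'_v)$ really does compute the Langlands--Shahidi $\gamma$-factor for $\GL(m) \times \GSpin^*(2n)$; this uses that the adjoint action of ${}^L(\GL(m)\times\GSpin^*(2n))$ on the relevant unipotent radical, restricted via $\iota_v$, is the standard tensor representation, which is exactly the content built into the embedding $\iota$ of Section~\ref{q-split-sec}. Granting that, every identification is a known archimedean compatibility (\cite{shahidi:duke85} on the $\GSpin$ side, the standard Rankin--Selberg compatibility on the $\GL$ side), and no new estimate or analytic input is needed.
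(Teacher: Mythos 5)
Your proposal is correct and follows essentially the same route as the paper: in the source, Proposition~\ref{equality-inf} carries no separate proof body because it simply records the identities \eqref{L-inf}, \eqref{e-inf}, \eqref{L-inf-tau}, \eqref{e-inf-tau} established in the preceding paragraph via archimedean local Langlands and the equality of Langlands--Shahidi factors with Artin--Weil factors from \cite{shahidi:duke85}, exactly as you describe. Your caveat about the adjoint action on the unipotent radical matching the standard tensor representation is a reasonable point to flag, but it is built into the framework the paper takes as given and does not require separate argument here.
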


\subsubsection{The non-archimedean unramified transfer}
If $v$ is a non-archimedean place of $k$ such that $\pi_{v}$ as well as all 
$K_{w}/k_{v}$, for $w \vert v$, are unramified, then by the arithmetic Langlands 
classification or the Satake classification \cite{borel-corvallis, satake}, 
the representation $\pi_{v}$ is 
parametrized by an unramified admissible homomorphism 
$\phi_{v}: W_{k_{v}} \longrightarrow {}^{L}\G_{v}^{0}$. 
Again we take $\Phi_{v}$ as in the statement of the theorem. It defines an irreducible, 
admissible, unramified representation $\Pi_{v}$ of $\GL(2n, k_{v})$ \cite{harris-taylor,henniart}. 

Given that $\pi_{v}$ is unramified its parameter $\phi_{v}$ factors through an unramified 
homomorphism into the maximal torus ${}^{L}\T_{v} \hookrightarrow {}^{L}\G_{v}$. Then 
$\Phi_{v}$ has its image in a torus of $\GL(2n,\C)$ which is split and $\Pi_{v}$ is the 
corresponding unramified representation. Then we have 

\begin{equation} \label{L-unram} 
L(s,\pi_{v}) =  L(s,\Pi_{v}) 
\end{equation} 
and 
\begin{equation} \label{e-unram} 
\epsilon(s,\pi_{v},\psi_{v}) = \epsilon(s,\Pi_{v},\psi_{v}) 
\end{equation} 
and the factors on either side of the above equations can be expressed as products of 
one-dimensional abelian Artin factors by multiplicativity of the local factors. 

Let $\tau_{v}$ be an irreducible, admissible, generic, unramified representation of $\GL(m,k_{v})$. 
Again appealing to the general multiplicativity of local factors \cite{jpss-factor,shahidi1990-annals,shahidi-ps-vol} 
we have 
\begin{equation} \label{L-unram-tau} 
L(s,\pi_{v}\times \tau_{v}) = L(s,\Pi_{v} \times \tau_{v}) 
\end{equation} 
and 
\begin{equation} \label{e-unram-tau} 
\epsilon(s,\pi_{v} \times \tau_{v},\psi_{v}) = \epsilon(s,\Pi_{v} \times \tau_{v},\psi_{v}). 
\end{equation} 
Hence, we again get the following matching of the twisted local $L$- and $\epsilon$-factors. 

\begin{prop}\label{equality-unram}
Let $v$ be a non-archimedean place of $k$ and let $\pi_{v}$ be an irreducible, admissible, 
generic, unramified representation of $\GSpin^{*}(2n,k_{v})$, $\Pi_{v}$ its local functorial transfer 
to $\GL(2n,k_{v})$, and $\tau_{v}$ an irreducible, admissible, generic representation 
of $\GL(m,k_{v})$. Then 
\[ L(s,\pi_{v} \times \tau_{v}) = L(s, \Pi_{v} \times \tau_{v}), \quad  
L(s,\w{\pi}_{v} \times \w{\tau}_{v}) = L(s, \w{\Pi}_{v} \times \w{\tau}_{v}) \] 
and 
\[ \epsilon(s,\pi_{v} \times \tau_{v}, \psi_{v}) = \epsilon(s,\Pi_{v} \times \tau_{v}, \psi_{v}). \]
\hfill$\Box$
\end{prop}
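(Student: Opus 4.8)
The plan is to reduce both the $L$- and $\epsilon$-matchings to the one-dimensional abelian case by multiplicativity of the local factors, exactly as was done for the untwisted factors in (\ref{L-unram})--(\ref{e-unram}) and for unramified $\tau_v$ in (\ref{L-unram-tau})--(\ref{e-unram-tau}); the only genuinely new points are to allow $\tau_v$ to be an arbitrary irreducible generic representation and to record the contragredient identities. First I would unwind the hypothesis that $\pi_v$ is unramified: its parameter $\phi_v$ factors through the maximal torus ${}^{L}\T_v \hookrightarrow {}^{L}\G_v$, so $\Phi_v = \iota_v \circ \phi_v$ lands in a split maximal torus of $\GL(2n,\C)$ --- the conjugation by $h$ entering $\iota$ in the non-split case merely permutes the torus coordinates, so this is unaffected. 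Hence $\Phi_v$ is described by an unordered tuple $(\chi_1,\dots,\chi_{2n})$ of unramified characters of $k_v^{\times}$, and by construction $\Pi_v$ is the irreducible unramified representation of $\GL(2n,k_v)$ attached to this tuple.

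Next I would apply multiplicativity of the Langlands--Shahidi local factors (\cite{shahidi1990-annals}; see also \cite{shahidi-ps-vol}) to the maximal Levi $\GL(m)\times\GSpin^{*}(2n)$ of $\GSpin^{*}(2(m+n))$. Realizing $\pi_v$ as a constituent of the principal series of $\GSpin^{*}(2n)$ matched with the tuple above, multiplicativity gives
\[ L(s,\pi_v\times\tau_v) = \prod_{i=1}^{2n} L(s,\tau_v\times\chi_i), \qquad
\epsilon(s,\pi_v\times\tau_v,\psi_v) = \prod_{i=1}^{2n} \epsilon(s,\tau_v\times\chi_i,\psi_v). \]
On the $\GL(2n)\times\GL(m)$ side, multiplicativity of the Rankin--Selberg factors (\cite{jpss-factor}) applied to the unramified $\Pi_v$ attached to $(\chi_1,\dots,\chi_{2n})$ produces the identical products. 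Since each rank-one factor $L(s,\tau_v\times\chi_i)$, $\epsilon(s,\tau_v\times\chi_i,\psi_v)$ is simply the standard $\GL(m)$ factor of the character twist $\tau_v\otimes\chi_i$ --- and so is the same however it is computed (Langlands--Shahidi, Godement--Jacquet, or Artin) --- the two products agree term by term, which is the first equality of the proposition. For the contragredient identities I would note that the transfer commutes with passage to the contragredient on unramified data: the Satake parameter of $\w{\pi}_v$ is obtained from that of $\pi_v$ by the Chevalley involution of $\GSO(2n,\C)$, which $\iota$ converts into inversion of the $\GL(2n,\C)$-parameter, so the transfer of $\w{\pi}_v$ is $\w{\Pi}_v$; the second equality then follows by applying the first part to $(\w{\pi}_v,\w{\tau}_v,\w{\Pi}_v)$, with $\w{\tau}_v$ again generic.

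I expect no serious obstacle: since $\pi_v$ is unramified everything collapses to abelian Artin factors via multiplicativity. The single point deserving care --- and essentially the only place the non-split structure of $\G$ and the explicit shape of $\iota$ enter --- is checking that Langlands--Shahidi multiplicativity on the $\GSpin^{*}(2(m+n))$ side yields exactly the $2n$ rank-one twists $\tau_v\times\chi_i$, with the normalization that makes the $\chi_i$ coincide with those describing $\Pi_v$; this is routine but should be spelled out. With that in hand the equalities follow, extending (\ref{L-unram-tau})--(\ref{e-unram-tau}) to general generic $\tau_v$ and supplying the contragredient statements, which proves the proposition.
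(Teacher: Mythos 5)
Your argument matches the paper's: both reduce the unramified $L$- and $\epsilon$-factor identities to rank-one abelian factors via multiplicativity (Langlands--Shahidi on the $\GSpin^{*}$ side, Rankin--Selberg on the $\GL$ side), after observing that the unramified parameter of $\pi_v$ factors through the maximal torus so that $\Phi_v$ lands in a split torus of $\GL(2n,\C)$. You are somewhat more careful than the paper's exposition, which assumes $\tau_v$ unramified in (\ref{L-unram-tau})--(\ref{e-unram-tau}) just before stating the proposition for general generic $\tau_v$ and leaves the contragredient identities tacit; your version, carried out only in the $\pi_v$-variable and noting that the transfer commutes with contragredients on unramified data, correctly covers these cases.
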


We also make the local transfer in this case explicit.  The analysis is similar to that of 
the quasi-split $\SO(2n)$ carried out in \cite[\S 7.2]{cpss-clay}, which we refer to for more detail. 

The unramified principal series representation $\pi_{v}$ is given by an unramified character 
\[ \chi = (\chi_{0},\chi_{1},\dots,\chi_{n-1},\chi^{1}_{n}) \] 
of the maximal torus in $\GSpin^{*}(2n,k_{v})$. Here, $\chi_{0}, \chi_{1},\dots, \chi_{n-1}$ are 
characters of $k_{v}^{\times}$ with $\chi_{0}$ being the central character of $\pi_{v}$ 
and $\chi^{1}_{n}$ is a character of $K_{w}^{1}$, elements of 
norm one in $K_{w}$ embedded in $\GL(2,k_{v})$ as in \cite{langlands-labesse}. By Hilbert 
Theorem 90 we have $K_{w}^{\times}/k_{v}^{\times} \cong K_{w}^{1}$, which allows us to 
extend $\chi^{1}_{n}$ to a character $\tilde{\chi}_{n}$ of $K_{w}^{\times}$. The local transferred 
representation $\Pi_{v}$ is then given by the character 
\[ \tilde{\chi} = \left(\chi_{1},\dots,\chi_{n-1},\tilde{\chi}_{n},\chi_{n-1}^{-1}\chi_{0}, 
\dots,\chi_{1}^{-1}\chi_{0} \right) \] 
of a (non-split) torus in $\GL(2n,k_{v})$. (See \cite[\S 6]{duke} for the split case.) 
To get principal series on $\GL(2n,k_{v})$ the 
character $\tilde{\chi}_{n}$ must factor through the norm map. Write 
\[ \tilde{\chi}_{n} = \chi_{n} \circ N_{K_{w}/k_{v}} \] 
with $\chi_{n}$ a character of $k_{v}^{\times}$ satisfying $\chi_{n}^{2} = \chi_{0}$. 
We get the principal series representation 
\[ I(\chi_{n}\mu_{K_{w}/k_{v}},\chi_{n}) = I(\chi_{n}\mu_{K_{w}/k_{v}}, \chi_{n}^{-1}\chi_{0}) \] 
of $\GL(2,k_{v})$, where $\mu_{K_{w}/k_{v}}$ is the quadratic character of $k_{v}^{\times}$ 
associated with the quadratic extension $K_{w}/k_{v}$ by local class field theory. Hence, $\pi_{v}$ 
transfers to the unramified principal series representation of $\GL(2n,k_{v})$ induced from the character 
\[ \left(\chi_{1},\dots\chi_{n-1}, \chi_{n}\mu_{K_{w}/k_{v}}, \chi_{n}^{-1}\chi_{0}, 
\chi_{n-1}^{-1}\chi_{0}, \dots,\chi_{1}^{-1}\chi_{0}\right). \] 
It is now clear that the central character of $\Pi_{v}$ is $\chi_{0}^{n} \mu_{K_{w}/k_{v}}$ 
and its contragredient is isomorphic to $\Pi_{v} \otimes \chi_{0}^{-1}$. Therefore, we have 
proved the following. 

\begin{prop}\label{omega-tilde}
Let $v$ be a non-archimedean place of $k$ and let $\pi_{v}$ be an irreducible, admissible, 
generic, unramified representation of $\GSpin^{*}(2n,k_{v})$ with $\Pi_{v}$ its local 
functorial transfer to $\GL(2n,k_{v})$ defined above. Then 
\[ \omega_{\Pi_{v}} = \omega_{\pi_{v}}^{n} \mu_{v} \] 
and 
\[ \Pi_{v} \cong \tilde{\Pi}_{v} \otimes \omega_{\pi_{v}}. \] 
Here, $\mu_{v}$ is a quadratic character of $k_{v}^{\times}$ associated with $\GSpin^{*}(2n,k_{v})$. 
\hfill$\Box$
\end{prop}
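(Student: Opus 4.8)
The plan is to extract both identities directly from the explicit description of the unramified local transfer recorded just before the statement; the proof is then essentially bookkeeping with the inducing characters. First I would recall that, $\pi_v$ being unramified and generic, it is the unramified subquotient of the principal series attached to an unramified character $\chi = (\chi_0,\chi_1,\dots,\chi_{n-1},\chi_n^1)$ of the maximal torus of $\GSpin^*(2n,k_v)$, with $\chi_0 = \omega_{\pi_v}$ the central character and $\chi_n^1$ a character of the norm-one subgroup $K_w^1$. The shape of the dual torus and of the $L$-embedding $\iota_v$ --- conjugation by the permutation matrix $h$ on the nontrivial Frobenius coset, exactly as for quasi-split $\SO(2n)$ in \cite[\S 7.2]{cpss-clay} --- forces the transferred Satake parameter to correspond to the unramified character
\[
\w{\chi} = \left(\chi_1,\dots,\chi_{n-1},\w{\chi}_n,\chi_{n-1}^{-1}\chi_0,\dots,\chi_1^{-1}\chi_0\right),
\]
with $\w{\chi}_n = \chi_n\circ N_{K_w/k_v}$ and $\chi_n^2 = \chi_0$. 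Expanding the $\GL(2)$-block as the principal series $I(\chi_n\mu_{K_w/k_v},\chi_n^{-1}\chi_0)$ then exhibits $\Pi_v$ as the unramified principal series of $\GL(2n,k_v)$ induced from
\[
\left(\chi_1,\dots,\chi_{n-1},\chi_n\mu_{K_w/k_v},\chi_n^{-1}\chi_0,\chi_{n-1}^{-1}\chi_0,\dots,\chi_1^{-1}\chi_0\right).
\]

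With this in hand the first assertion is the product of these $2n$ inducing characters: the $n-1$ pairs $\chi_i$, $\chi_i^{-1}\chi_0$ contribute $\chi_0^{\,n-1}$ and the middle two contribute $\chi_0\mu_{K_w/k_v}$, so $\omega_{\Pi_v} = \chi_0^{\,n}\mu_{K_w/k_v} = \omega_{\pi_v}^{\,n}\mu_v$, where $\mu_v = \mu_{K_w/k_v}$ is the local component of the quadratic character attached to $\GSpin^*(2n,k_v)$. For the second assertion I would pass to the contragredient of $\Pi_v$ --- which, for an unramified principal series, inverts each inducing character --- and then twist by $\omega_{\pi_v} = \chi_0$; using $\chi_n^{-1}\chi_0 = \chi_n$ (this is where $\chi_n^2=\chi_0$ enters) one finds that the resulting inducing data is the multiset $\{\chi_1,\dots,\chi_{n-1}\}\cup\{\chi_i^{-1}\chi_0\}_{i=1}^{n-1}\cup\{\chi_n\mu_{K_w/k_v},\chi_n\}$, which is exactly the inducing multiset of $\Pi_v$ after the same substitution. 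Since an unramified principal series depends only on the multiset of its inducing characters, this gives $\w{\Pi}_v\otimes\omega_{\pi_v}\cong\Pi_v$.

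I expect the only substantive point to be the first step, namely pinning down that the transferred parameter is $\w{\chi}$, since it requires unwinding the dual-group bookkeeping: locating the (non-split) torus of $\GL(2n,\C)$ containing the image of $\iota_v\circ\phi_v$, checking the effect of the $h$-conjugation on the Frobenius action, and verifying that the chosen extension of $\chi_n^1$ to $K_w^\times$ descends through the norm map with the quadratic twist $\mu_{K_w/k_v}$ landing in the correct slot. This verification runs parallel to the $\SO(2n)$ computation of \cite[\S 7.2]{cpss-clay} and the split $\GSpin(2n)$ computation of \cite[\S 6]{duke}, so I would keep it brief by appealing to those; everything afterward is routine character arithmetic.
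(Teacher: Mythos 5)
Your proposal is correct and follows exactly the paper's own route: the paper also reads off both identities from the explicit inducing character $(\chi_1,\dots,\chi_{n-1},\chi_n\mu_{K_w/k_v},\chi_n^{-1}\chi_0,\chi_{n-1}^{-1}\chi_0,\dots,\chi_1^{-1}\chi_0)$ of the unramified transfer derived in the preceding paragraph, and simply declares the central-character product and the contragredient relation "now clear." Your write-up merely spells out the character arithmetic (the $n-1$ paired contributions, $\mu_{K_w/k_v}^2=1$, and $\chi_n^{-1}\chi_0=\chi_n$) that the paper leaves implicit.
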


\subsubsection{The non-archimedean ramified transfer}\label{candidate-ram}
For $v$ a non-archimedean ramified place of $k$ we take $\Pi_{v}$ to be an arbitrary, irreducible, 
admissible representation of $\GL(2n,k_{v})$ whose central character satisfies 
\[ \omega_{\Pi_{v}} = \omega_{\pi_{v}}^{n} \mu_{v}, \] 
where $\mu_{v}$ is the quadratic character associated with the quadratic extension $K_{w}/k_{v}$.  

We can no longer expect equality of $L$- and $\epsilon$-factors as in the previous cases. However, 
we do still get equality if we include a highly ramified character thanks to stability of $\gamma$-factors.

\begin{prop}\label{equality-ram}
Let $v$ be a non-archimedean ramified place of $k$ and let $\pi_{v}$ be an irreducible, admissible, 
generic representation of $\GSpin^{*}(2n,k_{v})$ and let $\Pi_{v}$ be an irreducible, admissible 
representation of $\GL(2n,k_{v})$ as above. If $\tau_{v}=\tau'_{v}\otimes\eta_{v}$ is an irreducible, admissible, generic 
representation of $\GL(m,k_{v})$ with $\eta_{v}$ a sufficiently ramified character of $\GL(1,k_{v})$, 
then 
\[ L(s,\pi_{v} \times \tau_{v}) = L(s, \Pi_{v} \times \tau_{v}), \quad  
L(s,\w{\pi}_{v} \times \w{\tau}_{v}) = L(s, \w{\Pi}_{v} \times \w{\tau}_{v}) \] 
and 
\[ \epsilon(s,\pi_{v} \times \tau_{v}, \psi_{v}) = \epsilon(s,\Pi_{v} \times \tau_{v}, \psi_{v}). \]
\end{prop}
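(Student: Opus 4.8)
The plan is to reduce the assertion to a single equality of $\gamma$-factors and then obtain that from stability. First I would observe that, because $\eta_{v}$ is highly ramified, all four local $L$-factors occurring in the statement are identically $1$: on the $\GL(2n)\times\GL(m)$ side this is the standard vanishing of Rankin--Selberg $L$-factors under a sufficiently ramified twist, and on the $\GL(m)\times\GSpin^{*}(2n)$ side the Langlands--Shahidi $L$-factor $L(s,\pi_{v}\times\tau_{v})$ becomes trivial for the same reason. Hence the first two displayed equalities hold trivially, and, substituting $L(s,\pi_{v}\times\tau_{v})=L(s,\w{\pi}_{v}\times\w{\tau}_{v})=1$ (and the analogue for $\Pi_{v}$) into the two functional equations, one gets $\gamma(s,\pi_{v}\times\tau_{v},\psi_{v})=\epsilon(s,\pi_{v}\times\tau_{v},\psi_{v})$ and $\gamma(s,\Pi_{v}\times\tau_{v},\psi_{v})=\epsilon(s,\Pi_{v}\times\tau_{v},\psi_{v})$. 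So it remains to prove
\[ \gamma(s,\pi_{v}\times\tau_{v},\psi_{v})=\gamma(s,\Pi_{v}\times\tau_{v},\psi_{v}). \]

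Next I would invoke stability of $\gamma$-factors in the generality needed here. Proposition~\ref{stab} is the case $m=1$; applying the main theorem of \cite{cpss-stab} instead to the maximal Levi $\GL(m)\times\GSpin^{*}(2n)$ of $\GSpin^{*}(2(m+n))$ --- which one checks satisfies the hypotheses there, just as in the case $m=1$ --- shows that, for $\eta_{v}$ sufficiently ramified, $\gamma\bigl(s,\pi_{v}\times(\tau'_{v}\otimes\eta_{v}),\psi_{v}\bigr)$ depends only on the central character $\omega_{\pi_{v}}$ and on $\tau_{v}$; in particular it is unchanged if $\pi_{v}$ is replaced by any irreducible admissible generic representation $\sigma_{v}$ of $\GSpin^{*}(2n,k_{v})$ with $\omega_{\sigma_{v}}=\omega_{\pi_{v}}$. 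On the general linear side, the analogous stability under the highly ramified twist by $\eta_{v}$ shows that $\gamma\bigl(s,\Pi_{v}\times(\tau'_{v}\otimes\eta_{v}),\psi_{v}\bigr)$ depends only on $\omega_{\Pi_{v}}=\omega_{\pi_{v}}^{n}\mu_{v}$ and on $\tau_{v}$.

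It then suffices to exhibit one representation $\sigma_{v}$ for which the two sides visibly agree. Here I would take $\sigma_{v}$ to be the generic constituent of a principal series of $\GSpin^{*}(2n,k_{v})$ attached to a character $\chi=(\chi_{0},\chi_{1},\dots,\chi_{n-1},\chi^{1}_{n})$ of the maximal torus, in general position but with $\chi_{0}=\omega_{\pi_{v}}$, and let $\Sigma_{v}$ be its local functorial transfer to $\GL(2n,k_{v})$, described explicitly on the torus exactly as in the non-archimedean unramified case above; by construction $\omega_{\Sigma_{v}}=\chi_{0}^{n}\mu_{v}=\omega_{\Pi_{v}}$. Multiplicativity of $\gamma$-factors, applied on the Langlands--Shahidi and on the Rankin--Selberg side, reduces both $\gamma(s,\sigma_{v}\times\tau_{v},\psi_{v})$ and $\gamma(s,\Sigma_{v}\times\tau_{v},\psi_{v})$ to the same product of $\GL(1)\times\GL(m)$ local factors together with one $\GL(2)\times\GL(m)$ factor attached to the place $w\mid v$ of $K$, so the two coincide. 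Chaining the identities,
\[ \gamma(s,\pi_{v}\times\tau_{v},\psi_{v})=\gamma(s,\sigma_{v}\times\tau_{v},\psi_{v})=\gamma(s,\Sigma_{v}\times\tau_{v},\psi_{v})=\gamma(s,\Pi_{v}\times\tau_{v},\psi_{v}), \]
which together with the first paragraph finishes the argument.

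I expect the main obstacle to be bookkeeping rather than anything conceptually new. One has to confirm that the stability theorem of \cite{cpss-stab} genuinely applies to the Levi $\GL(m)\times\GSpin^{*}(2n)$ inside $\GSpin^{*}(2(m+n))$, check that the triviality of the Rankin--Selberg and of the Langlands--Shahidi $L$-factors together with the stability bounds on both sides can be arranged for one and the same $\eta_{v}$, and carry out the multiplicativity comparison for the auxiliary principal series --- keeping track, in particular, of the $\GL(2)$-factor attached to the non-split torus of $K_{w}/k_{v}$, which is the one feature genuinely new relative to the split $\GSpin$ groups treated in \cite{duke}.
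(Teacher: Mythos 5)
Your overall strategy---reduce everything to an equality of $\gamma$-factors, then use stability plus a concrete reference representation---is the right one and matches the spirit of the paper's proof. But there is a gap in the way you invoke stability, and the order of the reductions needs to be rearranged to close it.

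The paper does \emph{not} apply stability directly to the Levi $\GL(m)\times\GSpin^{*}(2n)$. Instead it first uses that at $v\in S$ the factor $\tau'_{v}$ is \emph{unramified} (this is built into $\mathcal T(S;\eta)$), so that $\tau_{v}=\operatorname{Ind}(\eta_{v}\nu^{b_{1}}\otimes\cdots\otimes\eta_{v}\nu^{b_{m}})$ is a full induced principal series from characters, and then uses multiplicativity of $L$- and $\epsilon$-factors \emph{on the $\GL(m)$-side} to reduce the whole proposition to the case $m=1$. Only then does it apply stability, which is Proposition~\ref{stab}, i.e.\ precisely the $\GL(1)\times\GSpin^{*}(2n)$ case of~\cite{cpss-stab}. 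Your second paragraph asserts, without supporting argument, that the main theorem of~\cite{cpss-stab} applies verbatim to the Levi $\GL(m)\times\GSpin^{*}(2n)$ and yields stability of $\gamma(s,\pi_{v}\times(\tau'_{v}\otimes\eta_{v}),\psi_{v})$ as a function of $\pi_{v}$. That is not how the theorem is formulated or used in this paper: the stability statement there concerns twists by a highly ramified character on a $\GL(1)$-factor of the Levi, and the proof of Proposition~\ref{stab} explicitly routes through the Levi $\GL(1)\times\GSpin^{*}(2n)\subset\GSpin^{*}(2n+2)$. The extension to $m>1$ is obtained \emph{a posteriori} by multiplicativity on the $\GL(m)$-side (again using that $\tau'_{v}$ is unramified), not as a direct application of~\cite{cpss-stab}. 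So as written, your argument uses a stability statement that has not been established.

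The remainder of your argument is sound and in fact parallels the paper's, just packaged differently. Where the paper writes out $\gamma(s,\pi_{v}\times\eta_{v},\psi_{v})$ explicitly as a product of abelian $\gamma$-factors (including the $\mu_{v}$-twisted one coming from the non-split torus coordinate) and matches it against the corresponding $\GL(2n)$ expression via \cite[Prop.~2.2]{js}, you achieve the same matching by first exhibiting a principal-series $\sigma_{v}$ with $\omega_{\sigma_{v}}=\omega_{\pi_{v}}$, transferring it to $\Sigma_{v}$ with $\omega_{\Sigma_{v}}=\omega_{\pi_{v}}^{n}\mu_{v}$, and applying multiplicativity on the $\GSpin$/$\GL(2n)$ side. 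One small imprecision: after writing $\tilde\chi_{n}=\chi_{n}\circ N_{K_{w}/k_{v}}$, the non-split coordinate contributes the pair of $\GL(1)$-characters $\chi_{n}\mu_{K_{w}/k_{v}}$ and $\chi_{n}^{-1}\chi_{0}$ of $k_{v}^{\times}$, so the factors that appear are still abelian $\GL(1)\times\GL(m)$ $\gamma$-factors, one of them twisted by $\mu_{v}$; there is no genuine $\GL(2)\times\GL(m)$ block. To repair your proof, insert the $\GL(m)$-side multiplicativity reduction (using that $\tau'_{v}$ is unramified at $v\in S$, a hypothesis you should state) \emph{before} invoking stability, so that you only ever use the $m=1$ case of~\cite{cpss-stab}; the rest then goes through.
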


\begin{proof}
The representation $\tau'_{v}$ can be written as a full induced principal series  
\[ \tau_{v} = \operatorname{Ind}\left(\nu^{b_{1}}\otimes\cdots\otimes\nu^{b_{m}} \right)\otimes\eta_{v} 
= \operatorname{Ind}\left(\eta_{v}\nu^{b_{1}}\otimes\cdots\otimes\eta_{v}\nu^{b_{m}} \right), \] 
where $\nu(\cdot)=|\cdot|_{v}$. By multiplicativity of the $L$- and $\epsilon$-factors we have  
\[ L(s,\pi_{v} \times \tau_{v}) = \prod_{i=1}^{m} L(s+b_{i},\pi_{v}\times\eta_{v}) \] 
and 
\[ \epsilon(s,\pi_{v} \times \tau_{v},\psi_{v}) = \prod_{i=1}^{m} L(s+b_{i},\pi_{v}\times\eta_{v},\psi_{v}).  \] 
Similarly, 
\[ L(s,\Pi_{v} \times \tau_{v}) = \prod_{i=1}^{m} L(s+b_{i},\Pi_{v}\times\eta_{v}) \] 
and 
\[ \epsilon(s,\Pi_{v} \times \tau_{v},\psi_{v}) = \prod_{i=1}^{m} L(s+b_{i},\Pi_{v}\times\eta_{v},\psi_{v}).  \] 
This reduces the proof to the case of $m=1$. 

Next, note that because $\eta_{v}$ is sufficiently ramified (depending on $\pi_{v}$) the $L$-functions 
stabilize to one and we have 
\[ L(s,\pi_{v}\times\eta_{v}) \equiv 1 \] 
and 
\[ \epsilon(s,\pi_{v}\times\eta_{v},\psi_{v}) = \gamma(s,\pi_{v}\times\eta_{v},\psi_{v}). \] 
On the other hand, by stability of gamma factors Proposition \ref{stab} we may replace $\pi_{v}$ 
with another representation with the same central character. Hence, for $n$ arbitrary characters 
$\chi_{1},\chi_{2},\dots,\chi_{n}$, $\chi_{0}=\omega_{\pi_{v}}$ and the quadratic character 
$\mu_{v}$ we have 
\begin{eqnarray*} 
\gamma(s,\pi_{v}\times\eta_{v},\psi_{v}) &=& 
\left(\prod_{i=1}^{n-1} \gamma(s,\eta_{v}\chi_{i},\psi_{v}) \gamma(s,\eta_{v}\chi_{i}^{-1}\chi_{0},\psi_{v}) \right) \\
& \cdot & \gamma(s,\eta_{v}\chi_{n}\mu,\psi_{v}) \gamma(s,\eta_{v}\chi_{n}^{-1}\chi_{0},\psi_{v}). 
\end{eqnarray*}
We refer to \cite[\S 6]{duke} for more details in the split case. The calculations in the quasi-split case 
are similar, the only difference being the appearance of the quadratic character $\mu_{v}$. 

We have similar relations also for the $\GL$ case. More precisely, because 
$\omega_{\Pi_{v}} = \chi_{0}^{n}\mu$, by \cite[Proposition 2.2]{js} we have 
\[ L(s,\Pi_{v}\times\eta_{v}) \equiv 1 \] 
and 
\begin{eqnarray*} 
\epsilon(s,\Pi_{v} \times \eta_{v},\psi_{v}) &=& 
\left(\prod_{i=1}^{n-1} \gamma(s,\eta_{v}\chi_{i},\psi_{v}) \gamma(s,\eta_{v}\chi_{i}^{-1}\chi_{0},\psi_{v}) \right) \\
&\cdot& \gamma(s,\eta_{v}\chi_{n}\mu,\psi_{v}) \gamma(s,\eta_{v}\chi_{n}^{-1}\chi_{0},\psi_{v}). 
\end{eqnarray*}
Note that this is a special case of the multiplicativity of the local factors. This gives the equalities 
for the case of $m=1$ and hence completes the proof. 
\end{proof}

\subsection{Proof of Theorem \ref{q-split-weak}}

Let $\omega$ denote the central character $\omega_\pi$ of $\pi$ and let $S$ be as in 
the statement of Theorem \ref{q-split-weak}. 
We let $\Pi = \otimes_v \Pi_v$ with $\Pi_{v}$ the candidates we constructed 
in \ref{candidate-inf}--\ref{candidate-ram}. Also, let $\mu = \otimes_{v} \mu_{v}$ 
be a quadratic id\`ele class character associated, by class field theory, with the quadratic 
extension $K/k$, where $K$ is the field over which $\GSpin^{*}(2n)$ is split. 

Choose an id\`ele class character $\eta$ of $k$ which is sufficiently ramified at places $v \in S$ 
so that the requirements of Propositions \ref{nice} and \ref{stab} are satisfied. We apply 
Theorem \ref{converse} to the representation $\Pi$ and $\mathcal T(S;\eta)$ with $S$ and 
$\eta$ as above. 

By construction the central character $\omega_{\Pi}$ of $\Pi$ is equal to $\omega^{n} \mu$. 
Therefore, it is invariant under $k^{\times}$. Moreover, by (\ref{L-inf}) and (\ref{L-unram}) we have 
\[ L^{S}(s,\Pi) = \prod_{v\not\in S} L(s,\Pi_{v}) = \prod_{v\not\in S} L(s,\pi_{v}) = L^{S}(s,\pi). \] 
This implies that $L(s,\Pi) = \prod_{v} L(s,\Pi_v)$ is absolutely convergent in some right half plane. 
Furthermore, by Propositions \ref{equality-inf}, \ref{equality-unram}, and \ref{equality-ram} we 
are free to check the remaining properties of being nice for the $L$-functions $L(s,\Pi\times\tau)$ 
and the corresponding $\epsilon$-factors, for $\tau \in \mathcal T(S;\eta)$, instead for 
the $L$-functions $L(s,\pi\times\tau)$ and its corresponding $\epsilon$-factors. 
Now the converse theorem can be applied thanks to Proposition \ref{nice} to conclude that 
there exists an automorphic representation of $\GL(2n,\A_{k})$ whose local components 
at $v \not\in S$ agree with those of $\Pi$. This automorphic representation is what we are 
calling $\Pi$ in the statement of the theorem. 

Finally, by Proposition \ref{omega-tilde}, outside the finite set $S\cup\left\{v : v|\infty \right\}$, 
the central character of the automorphic representation $\Pi$ agrees with the id\`ele class character 
$\omega^{n}\mu$, which implies that it is equal to $\omega^{n}\mu$. The same proposition 
also gives that $\Pi_{v} \cong \w{\Pi}_{v} \otimes \omega_{\pi_{v}}$ for 
$v \not\in S\cup\left\{v : v|\infty \right\}$. This completes the proof.  
\hfill$\Box$

\section{The integral for $\GL(m)\times\G(n)$} \label{grs}

In this section we develop the analogue of the theory of Gelbart, Ginzburg, Piatetski-Shapiro, 
Rallis and Soudry in our situation. 

Let $n \ge 0$ and denote by $\G=\G(n)$ either $\GSpin(2n+1)$ or a quasi-split 
$\GSpin(2n)$. We write down and analyze a zeta integral which will give the $L$-functions 
for $\GL(m) \times \G(n)$ for $m \le n$. For $m=n$ the construction and analysis of 
this zeta integral when $\G$ is a special orthogonal group is due to Gelbart and Piatetski-Shapiro 
\cite{gps} using their Methods A and B. (They also cover $\G$ symplectic.) 
Ginzburg then extended their work to the case of $m < n$ and $\G$ a special orthogonal 
group in \cite{ginzburg}. 
We would like to carry out the same construction and analysis for $m \le n$ and $\G$ a 
general spin group. We first need to define a certain unipotent subgroup of $\G$ just as in 
\cite{ginzburg}. For $m < n$ we use the embeddings we described in Section \ref{embed}. 
We point out that this subgroup will be trivial if $m=n$ and Ginzburg's integral reduces 
to that of \cite{gps}.

\subsection{Unipotent subgroups} 
We define a subgroup $\N$ of $\G(n)$ generated by the subgroups 
$\X$, $\Y$, and $\U_{2(n-m)+1}$ in the odd case or $\U_{2(n-m)}$ in the even case, 
as follows. 

Each of the three subgroups is generated by a family of root groups. Let 
\[ \U_{2(n-m)+1} = \left\langle \U_{\alpha} \left| 
\alpha = \begin{cases} e_{i} \pm e_{j}, & m+1 \le i < j \le n \\
e_{\ell}, &  m+1 \le \ell \le n \end{cases} \right.
\right\rangle \] 
in the odd case or 
\[ \U_{2(n-m)} = \left\langle \U_{\alpha} \left| 
\alpha =  e_{i} \pm e_{j}, \quad m+1 \le i < j \le n \right.
\right\rangle \] 
in the even case, 
i.e., $\alpha$ is a positive root which can be written as a linear combination of 
the simple roots of $\G(n)$ not involving the first $m$. Clearly $\U_{2(n-m)+1}$ 
embeds naturally in the maximal unipotent subgroup $\U=\U_{2n+1}$ of $\GSpin(2n+1)$ 
and $\U_{2(n-m)}$ embeds in $\U=\U_{2n}$, the maximal unipotent subgroup of $\GSpin(2n)$.  

Moreover, let 
\[ \X = \left\langle \U_{-(e_{i}-e_{j})} \left| \begin{array}{c}1 \le i \le m, \\ m+1\le j \le n \end{array}\right.
\right\rangle, \] 
and 
\[ \Y = \left\langle \U_{(e_{i}+e_{j})} \left| \begin{array}{c}1 \le i \le m, \\ m+1\le j \le n \end{array}\right.
\right\rangle \] 
in either case. 

\subsection{The Zeta Integral} 
Let $\pi$ be a generic, cuspidal, automorphic representation of $\G(n,\A)$ and 
let $\tau$ be a cuspidal, automorphic representation of $\GL(m,\A)$. Denote by 
$U=\U(\A)$ the maximal unipotent subgroup of $\G(\A)$ generated by $U_{\alpha}$ for 
$\alpha \in R^{+}$. Every $u \in U$ can be written uniquely as 
\begin{equation} 
u = \prod_{\alpha \in R^{+}} u_{\alpha}(x_{\alpha}).  
\end{equation}

Let $\psi$ be a non-degenerate (additive) character of $k \backslash \A$ and extend 
it to a non-degenerate character, again denoted by $\psi$, of $U$ via 
\begin{equation}
\psi(u) = \psi \left( \sum_{\alpha \in \Delta} x_{\alpha} \right). 
\end{equation} 
We assume that $\pi$ is $\psi$-generic, i.e., the space 
$\mathcal W(\pi,\psi)$ (Whittaker model) of functions 
\begin{equation}
W_{\phi}(g) = \int_{\U(k)\backslash \U(\A)} \phi(ug) \psi(u) du, \quad \phi \in \pi, g \in \G(\A), 
\end{equation} 
is non-zero. Recall that cuspidal, automorphic representations of $\GL(m,\A)$, such as $\tau$, 
are automatically generic. 

The group $\M_{m} = \GL(m) \times \GL(1)$ sits inside either of $\H=\H(m)=\GSpin(2m+1)$ or $\GSpin(2m)$ 
as the Levi component of the standard Siegel parabolic $\P_{m}=\M_{m} \N_{m}$. This parabolic corresponds to the subset 
\[ \theta = \Delta - \{ \alpha_{m}\} \] 
of $\Delta$. We choose $\H$ of the type opposite to $\G$. In other words, if $\G$ is of type $B_{n}$, 
then we take $\H$ of type $D_{m}$ and vice versa.  We also denote the maximal unipotent subgroup 
of $\M_m$ by $\ZZ_m$. Therefore, $\ZZ_m \N_m = \U_\H,$ the maximal unipotent subgroup of $\H=\H(m)$.  

Let $\omega$ be an id\`ele class character of $\GL(1,\A)$.  
Let
\begin{equation} \label{tauprime}
\tau'_{s} = \tau  |\det|^{s- 1/2} \otimes \omega
\end{equation}
be a representation of $\M_{m}(\A)$, and 
extend it trivially across $\N_{m}(\A)$ to obtain a representation of $\P_{m}(\A)$. 
Consider the normalized induced representation  
\begin{equation}\label{induced} 
\operatorname{Ind}_{\P_{m}(\A)}^{\H(\A)} \left(\tau'_{s} \right). 
\end{equation}

For $f_{\tau'_{s}}$ in this induced representation construct the Eisenstein series 
\begin{equation} \label{EisConstruct} 
E(h, f_{\tau'_{s}}) = \sum_{\gamma \in \P_{m}(k) \backslash \H(k)} f_{\tau'_{s}}(\gamma h), 
\quad h \in \H(\A).  
\end{equation}

Moreover, we define the ``quasi-Whittaker functions'' $W_{f_{\tau'_{s}}}$ as follows. 
Recalling that $\ZZ_{m}$ is the maximal unipotent subgroup of the Siegel Levi 
$\M_m=\GL(1) \times \GL(m)$ in $\H$, regard $\psi$ as a character of $\ZZ_{m}(\A)$ 
through $\psi(z) = \psi(\sum z_{i,i+1})$. With $f_{\tau'_{s}}$ as before we define 
\begin{equation}\label{Wf}
W_{f_{\tau'_{s}}}(h) = \int_{\ZZ_{m}(k) \backslash \ZZ_{m}(\A)} f(zh) \psi(z) dz, \quad h\in\H(\A).  
\end{equation}

We are now prepared to define the zeta integral as follows. Let $\phi$ be in the space of 
$\pi$ and let $f_{\tau'_{s}}$ be as above. Recall that $\X$, $\Y$ and $\U_{\ell}$ with $\ell=2(n-m)+1$ 
or $\ell=2(n-m)$ are unipotent subgroups of $\G(n)$ and we have the embedding 
$\i : \H \hookrightarrow \G(n)$, where $\H=\GSpin(2m)$ or $\GSpin(2m+1)$, respectively. 
Define 
\begin{eqnarray} \nonumber
I(\phi,f_{\tau'_{s}}) &=& 
\int\limits_{\X(k) \backslash \X(\A)} \quad 
\int\limits_{\Y(k) \backslash \Y(\A)} \quad 
\int\limits_{\U_{\ell}(k) \backslash \U_{\ell}(\A)} \quad 
\int\limits_{\H(k) \backslash \H(\A)}   \\ 
&& \phi\left[u_{\ell} \cdot y \cdot x \cdot \i(h) \right] 
\cdot \psi(u_{\ell}) 
\cdot E(h,f_{\tau'_{s}}) \, dh \, du_{\ell} \, dy \, dx . \label{zeta}
\end{eqnarray} 

We should remark here that (\ref{zeta}) defines a Rankin-Selberg type integral as follows. Recalling 
that $\N$ denotes 
the subgroup generated by $\X$, $\Y$, and $\U_{\ell}$ and setting  
\[ \phi_{N}(h) = \int\limits_{\N(k) \backslash \N(\A)} \phi\left(n \cdot \i(h)\right) \psi(n) dn, \quad h \in \H(\A), \]
we have 
\[ I(\phi,f_{\tau'_{s}}) = \int\limits_{\H(k) \backslash \H(\A)} \phi_{N}(h) E(h,f_{\tau'_{s}}) dh. \]

\subsection{The Basic Identity} 
We now state and prove the basic identity for the zeta integrals we just introduced. We start 
with a lemma. This lemma is an analogue of Ginzburg's lemma in \cite[p. 172]{ginzburg}. For a special 
more explicit statement of this lemma see \cite[p. 168]{ginzburg}.  The proof of the lemma also 
carries over, more or less word for word, from that of Ginzburg's lemma in \cite[p. 172]{ginzburg}. 
We remark that we are using slightly different notation from Ginzburg's paper. For example, we 
are making a distinction between $\H$ and the image of its embedding in $\G$ through the use of 
the map $\i$. Also we are using the notation $\N_m$ for the unipotent radical of the parabolic 
$\P_m$ reserving $\M_m$ for its Levi component while Ginzburg's paper uses $M_k$ (with his 
$k$ being our $m$) for the unipotent radical. 

\begin{lem}\label{ginzburg-lemma}
With notation as in the previous section we have 
\begin{eqnarray*} 
\int\limits_{\X(\A)} 
\sum\limits_{\gamma \in \ZZ_m(k) \backslash \M_m(k) } W_\phi(\i(\gamma) x g) \, dx 
&=&  
\int\limits_{\N_m(k) \backslash \N_m(\A)} \quad 
\int\limits_{\X(k) \backslash \X(\A)} \quad 
\int\limits_{\Y(k) \backslash \Y(\A)} \quad 
\int\limits_{\U_{\ell}(k) \backslash \U_{\ell}(\A)} \\
&& \phi\left[ u_\ell \cdot y \cdot x \cdot n \cdot g \right] \psi(u_\ell) \, 
du_\ell \, dy \, dx \, dn, \quad \forall g \in \G(\A). 
\end{eqnarray*}
\end{lem}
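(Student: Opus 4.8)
The plan is to prove the identity by the standard unfolding of the Whittaker function, exactly as in Ginzburg \cite[pp.~168 and 172]{ginzburg} (following Gelbart--Piatetski-Shapiro \cite{gps}), with the embedding $\i:\H\hookrightarrow\G(n)$ carried along explicitly at every step. The first move is to substitute the definition of the Whittaker model into the left-hand side, rewriting it as
\[
\int\limits_{\X(\A)}\ \sum_{\gamma\in\ZZ_m(k)\backslash\M_m(k)}W_\phi(\i(\gamma)xg)\,dx
=\int\limits_{\X(\A)}\ \sum_{\gamma}\ \int\limits_{\U(k)\backslash\U(\A)}\phi\bigl(u\,\i(\gamma)\,x\,g\bigr)\,\psi(u)\,du\,dx,
\]
where $\U$ is the maximal unipotent subgroup of $\G(n)$; note that $\U$ contains $\U_{\ell}$, $\Y$, $\i(\N_m)$ and the embedded maximal unipotent $\i(\ZZ_m)$ of $\GL(m)$ as subgroups, while $\X$ is built from negative root groups and lies outside $\U$. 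One then reorganizes the $\U$-integration according to a factorization into root groups adapted to $\i$.

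The structural facts I would establish first, all of them $\GSpin$-analogues of what Ginzburg uses in the orthogonal case and all read off directly from the root data of Sections \ref{G-odd}--\ref{G-even-q-split} and the embedding formulas (\ref{i-odd-1})--(\ref{i-odd-4}), (\ref{i-even-1})--(\ref{i-even-3}), are: (i) $\i(\M_m)$ normalizes each of $\U_{\ell}$, $\Y$, $\i(\N_m)$ and $\X$, and $\i(\GL(m))$ acts on its own maximal unipotent $\i(\ZZ_m)$ in the usual way; (ii) the generic character $\psi$ restricts to these subgroups exactly as in Ginzburg's setting --- by the standard $\psi$-Whittaker character on $\U_{\ell}$ (viewed as the maximal unipotent of the smaller $\GSpin$ group it defines) and on $\i(\ZZ_m)$ (the $\GL(m)$-Whittaker character), trivially on $\Y$ and on $\i(\N_m)$, with the one remaining simple-root direction supplying the ``gluing'' character needed to run the argument; and (iii) by Lemma \ref{torus-i} the image under $\i$ of the torus of $\H$, including the $e_0^*$-coordinate and the sign corrections it creates, is known explicitly, so it enters no formula as an unknown.

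Granting (i)--(iii), the unfolding proceeds as in \cite{ginzburg}. Using (i) I would commute $\i(\gamma)$ to the left through the root-group factors of $u$ lying outside $\i(\ZZ_m)$ and then absorb it by the left $\G(k)$-invariance of $\phi$; this fuses the sum over $\gamma\in\ZZ_m(k)\backslash\M_m(k)$ with the $\i(\ZZ_m(k))\backslash\i(\ZZ_m(\A))$-part of the Whittaker integral and with the outer integral over $\X(\A)$. The resulting object is an iterated mirabolic-type unfolding for $\GL(m)$, together with a root exchange that converts the positive root groups $\U_{e_i-e_j}$ ($1\le i\le m<j\le n$) occurring inside $\U$ into their opposites --- which is precisely $\X$ --- and simultaneously replaces $\int_{\X(\A)}$ by the compact integral $\int_{\X(k)\backslash\X(\A)}$. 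When $\G=\GSpin(2n+1)$ the leftover root groups $\U_{e_i}$ ($1\le i\le m$) are eliminated by a Fourier expansion: the constant terms vanish by cuspidality of $\phi$, and the nontrivial frequencies are absorbed into the $\GL(m,k)$-orbit sum. Collecting the integrals that survive --- over $\i(\N_m)$, $\X$, $\Y$ and $\U_{\ell}$, each over its compact quotient, with $\psi$ surviving only on $\U_{\ell}$ --- gives exactly the right-hand side.

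The main obstacle is not conceptual but combinatorial: one must verify the precise ordered factorizations of $\U$ into root groups compatible with $\i$, every commutator identity invoked to justify the successive changes of variable, and the matching of Haar measures at each collapse. This is exactly where one must check that the passage from $\SO$ to $\GSpin$ is harmless --- the extra central $\GL(1)$ in the Siegel Levi $\M_m=\GL(m)\times\GL(1)$ and the extra coordinate $e_0^*$ (controlled by Lemma \ref{torus-i}) must be tracked through every line and shown to produce no extra factor --- so that Ginzburg's argument carries over, as claimed, essentially word for word.
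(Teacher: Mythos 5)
Your proposal is correct and takes essentially the same approach as the paper, which itself gives no proof of this lemma but simply states that Ginzburg's argument in \cite[p.~172]{ginzburg} carries over ``more or less word for word''; you supply the outline of that unfolding, correctly identifying the verifications — ordered root-group factorizations compatible with $\i$, the commutator relations, Haar-measure bookkeeping, and the tracking of the $e_0^*$ direction and the central $\GL(1)$ in $\M_m$ — that the passage from $\SO$ to $\GSpin$ requires.
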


We now state the basic identity involving the zeta integrals, the main result of this section. 

\begin{thm} \label{id}
\begin{itemize}
\item[(a)] $I(\phi,f_{\tau'_{s}})$ converges for all $s$. 
\item[(b)] \[ I(\phi,f_{\tau'_{s}}) = 
\int\limits_{\U_{\H}(\A) \backslash \H(\A)} \quad 
\int\limits_{\X(\A)} W_{\phi}(\i(h)\, x) \, W_{f_{\tau'_{s}}}(h) \, dx \, dh . \]
\item[(c)] $I(\phi,f_{\tau'_{s}})$ has a meromorphic continuation and 
satisfies the functional equation.  
\[ I\left(\phi,f_{\tau'_{s}}\right) = I\left(\phi, M(s)f_{\tau'_{1-s}}\right). \]
\end{itemize}
\end{thm}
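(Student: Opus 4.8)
The plan is to follow the Gelbart--Piatetski-Shapiro--Ginzburg template closely, with the map $\i:\H\hookrightarrow\G(n)$ and Lemma~\ref{torus-i} absorbing all the extra bookkeeping that distinguishes $\GSpin$ from $\SO$ and $\Sp$. I will prove the three parts essentially in the order (b), then (a), then (c), since the unfolding identity in (b) is what makes convergence transparent.

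First I would prove (b), the basic identity, by unfolding the Eisenstein series. Starting from
\[
I(\phi,f_{\tau'_{s}})=\int\limits_{\H(k)\backslash\H(\A)}\phi_{N}(h)\,E(h,f_{\tau'_{s}})\,dh,
\]
I substitute the definition \eqref{EisConstruct}, collapse the sum over $\P_m(k)\backslash\H(k)$ against the integral over $\H(k)\backslash\H(\A)$ to get an integral over $\P_m(k)\backslash\H(\A)$, and then write $\P_m=\M_m\N_m$. The integral over $\N_m(k)\backslash\N_m(\A)$ combines with the unipotent integrations hidden in $\phi_N$ via Lemma~\ref{ginzburg-lemma}: that lemma is exactly the device that trades the $\N_m$-period of $\phi$ against a sum over $\ZZ_m(k)\backslash\M_m(k)$ of Whittaker functions $W_\phi(\i(\gamma)xg)$. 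After this substitution the residual sum over $\ZZ_m(k)\backslash\M_m(k)$ together with the integration over $\M_m$ and the $\ZZ_m$-period defining $W_{f_{\tau'_{s}}}$ (from \eqref{Wf}) reassembles into the single adelic integral over $\U_{\H}(\A)\backslash\H(\A)$, since $\U_\H=\ZZ_m\N_m$. The $\X(\A)$-integral survives because $\X$ is not killed by the Whittaker character. One has to be careful that the formal manipulations are justified by the rapid decay of cusp forms and the moderate growth of the Eisenstein series away from its poles; this is where I would invoke gauge estimates for $\phi_N$ and the convergence of the Eisenstein series in a right half plane, and only afterward argue meromorphic continuation.

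For (a), convergence for all $s$, the point is that after the unfolding in (b) the inner integral is over $\U_{\H}(\A)\backslash\H(\A)$ of $W_\phi$ against $W_{f_{\tau'_{s}}}$, and $W_{f_{\tau'_{s}}}$ involves $|\det|^{s}$ only on the $\GL(m)$-part of the Siegel Levi, which is a compact quotient direction once we divide by $\U_\H$ and the center; the genuine growth is controlled by the rapid decay of $W_\phi$ coming from cuspidality of $\pi$. Here is exactly where Lemma~\ref{torus-i} enters: to estimate $W_\phi(\i(a)xg)$ for $a$ in the maximal torus of $\H$ I need the explicit formula for $\i(a)$ in terms of the $f_i^*$, so that I can compare the exponents appearing in $W_\phi$ (governed by the roots of $\G(n)$) against the modulus character of the Siegel parabolic of $\H$. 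The extra factor $f_n^*(-1)$ and the sign changes in the odd-$\H$ case only shift arguments by elements of finite order and do not affect convergence. I would then dominate the integrand by a product of a Schwartz-type function in the torus coordinates times something integrable, uniformly on compact sets of $s$, giving convergence for all $s\in\C$.

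Finally, for (c), meromorphic continuation and the functional equation follow formally from (b): the right-hand side of (b) is an entire function of $s$ by (a), so $I(\phi,f_{\tau'_{s}})$ is entire in $s$ as a function of the section, while on the other hand the Eisenstein series $E(h,f_{\tau'_{s}})$ has a meromorphic continuation and satisfies $E(h,f_{\tau'_{s}})=E(h,M(s)f_{\tau'_{1-s}})$ for the standard intertwining operator $M(s)$ attached to the Siegel parabolic; substituting this into the defining integral \eqref{zeta} and using that $I$ depends on $f_{\tau'_{s}}$ only through that integral yields
\[
I(\phi,f_{\tau'_{s}})=I(\phi,M(s)f_{\tau'_{1-s}}).
\]
The main obstacle I anticipate is not any single one of these steps in isolation but the careful verification that the unfolding in (b) is term-by-term legitimate --- i.e., interchanging the Eisenstein sum with the integrations and applying Ginzburg's lemma --- in the $\GSpin$ setting, where one must track the intersection of the derived group with the central torus (the elements $c$ of the earlier lemmas) to be sure the embedded unipotent groups $\X$, $\Y$, $\U_\ell$ behave exactly as in \cite{ginzburg}; once that is in hand, (a) and (c) are comparatively routine adaptations.
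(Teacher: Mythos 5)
Your sketch for (b) and (c) matches the paper: the unfolding of the Eisenstein series, the use of Lemma~\ref{ginzburg-lemma} to trade the $\N_m$-period of $\phi$ against the sum over $\ZZ_m(k)\backslash\M_m(k)$, and the reassembly via $\U_\H = \ZZ_m\N_m$ into the Whittaker integral over $\U_\H(\A)\backslash\H(\A)$ are precisely what the paper does; and (c) is deduced exactly as you say, from the functional equation $E(h,f_{\tau'_s}) = E(h, M(s)f_{\tau'_{1-s}})$ substituted into the defining integral (\ref{zeta}). (A small slip: $I$ is meromorphic, not entire --- it inherits the poles of $E$.)

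The genuine gap is in your (a), and it is caused by your re-ordering. You want to deduce convergence for all $s$ from the unfolded right-hand side of (b), asserting that the $\GL(m)$-part of the Siegel Levi becomes ``a compact quotient once we divide by $\U_\H$ and the center.'' This is false: by Iwasawa decomposition, $\U_\H(\A)\backslash\H(\A)$ still carries the full torus, which even modulo center has $m$ noncompact directions, and there $W_{f_{\tau'_s}}$ grows like $|\det|^{\Re(s)}$. Consequently the unfolded integral converges only in a right half plane --- which suffices to legitimize the manipulations in (b), but cannot give the all-$s$ statement of (a). The paper proves (a) directly on the original form of $I$, before any unfolding: it covers $\H(k)\backslash\H(\A)$ by a Siegel set $S_H = U_H^0 A_{c_1} K_H$, commutes $x$ past $\i(u')$, writes $x\,\i(a) = u''rk$ in a Siegel set $S_G$ of $\G(\A)$, and then must make the rapid decrease of the cusp form $\phi$ in the $r$-coordinate dominate the polynomial growth of $E$. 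Since $r$ is not explicit, the decisive step is to apply the character $D = f_1+\cdots+f_n$ to both sides of $x\,\i(a) = u''rk$ and read off $|t_1\cdots t_m| = |r_1\cdots r_n|$, whence $|\alpha_1(r)\cdots\alpha_n(r)|\to\infty$. This is where Lemma~\ref{torus-i} actually enters --- to compute $\i(a)$ explicitly in the $f_i^*$ basis so that $D$ can be evaluated --- not, as in your proposal, to estimate $W_\phi(\i(a)xg)$ in the unfolded form, which is a half-plane argument and does not close (a).
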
 

\begin{proof} 
The proof is similar to that of \cite[Theorem A]{ginzburg} which we closely follow. 

To prove part (a) consider the integral on the right hand side of (\ref{zeta}). First, we would like to 
replace the integration over $\H(k) \backslash \H(\A)$ by a Siegel set. Recall that a 
Siegel set in $\H(\A)$ is a set of the form 
\[ S_{H} = U^{0}_{H} A_{c} K_{H}, \] 
where $U^{0}_{H}$ is a relatively compact subset of the maximal unipotent 
subgroup $\U_{\H}(\A)$ in $\H(\A)$,  $K_{H}$ is a maximal compact subgroup in $\H(\A)$, 
and $A_{c}$ consists of elements $a$ in the maximal torus of $\H(\A)$ satisfying 
$|\alpha(a)| \ge c$ for all simple roots $\alpha$ of $\H$.  By reduction theory we 
know that $\H(\A) = \H(k) S_{H}$. 

Choose a Siegel set $S_{H} = U^{0}_{H} A_{c_{1}} K_{H}$ in $\H(\A)$ as above so that 
(\ref{zeta}) can be written as 
\begin{eqnarray}\label{zeta2} 
\nonumber
I(\phi,f_{\tau'_{s}}) &=& 
\int\limits_{\X(k) \backslash \X(\A)} \quad 
\int\limits_{\Y(k) \backslash \Y(\A)} \quad 
\int\limits_{\U_{\ell}(k) \backslash \U_{\ell}(\A)} \quad 
\int\limits_{U^{0}_{H}} \quad 
\int\limits_{A_{c_{1}}} \quad 
\int\limits_{K_{H}}   \\ 
&& \phi\left[u_{\ell}\,  y\,  x\,  \i(u')\, \i(a)\, \i(k) \right] 
\cdot \psi(u_{\ell}) 
\cdot E(u' a k,f_{\tau'_{s}}) \, du' \, da \, dk \, du_{\ell} \, dy \, dx .
\end{eqnarray} 

Note that we have $x \, \i(u') = \i(u') \, x$. This follows from the definitions of $\X$ and 
of the embedding $\i$ because the root groups defining $X$ and $\i(u')$ commute, a fact 
that follows from \cite[(21)]{duke}, for example. Hence, we are allowed to change the 
order of $x \, \i(u')$ to $\i(u') \, x$ in the integral. 

Next, we choose a Siegel set $S_{G} = U^{0}_{G} R_{c} K_{G}$ in $\G(\A)$ in a similar 
way. Write 
\begin{equation}\label{in-siegel} 
x \, \i(a) = u'' r k, \quad u'' \in U^{0}_{G}, r \in R_{c}, k \in K_{G}. 
\end{equation} 
Then the integral (\ref{zeta2}) becomes 
\begin{equation}\label{zeta3} 
I(\phi,f_{\tau'_{s}}) = %
\int\limits_{U'} 
\int\limits_{R_{c}} 
\int\limits_{K_{G}} %
\phi\left( u'\, r\, k \right) 
\cdot \psi(u') 
\cdot E(u' r k,f_{\tau'_{s}}) \, du' \, dr \, dk , 
\end{equation} 
where $U'$ is some unipotent set in $\G(\A)$. 

Recall that $\phi$ is a cusp form and hence {\it rapidly decreasing}. This means, in particular, 
that for any $N \in \Z$ there exists a constant $C_{\phi,N}$ such 
that 
\begin{equation}\label{rap-dec} 
\left| \phi( u'\, r\, k) \right| \le C_{\phi,N} \left| \alpha(r)\right|^N, 
\end{equation} 
for all simple roots $\alpha$ of $\G$. Moreover, the Eisenstein series $E(u'\, r\, k,f_{\tau'_{s}})$ 
is {\it slowly increasing}. This means, in particular, that (\ref{rap-dec}) holds, with $\phi$ 
replaced by the Eisenstein series, for {\it some} $N$.  
We will use these facts to bound the integral on the right hand side of (\ref{zeta3}). 
Notice that we do not know $r$ explicitly and, in particular, we do not know that 
$|\alpha_i(r)|\to\infty$ for some $i$. Instead, we have to find some other way to bound the 
right hand side of (\ref{zeta3}).   

Consider the equality (\ref{in-siegel}). Writing  
\[ a = e_0^*(t_0) e_1^*(t_1) \dots e_m^*(t_m) \] 
use Lemma \ref{torus-i} to see that 
\[ \i(a) = \begin{cases}
f_0^*(t_0) f_1^*(t_1) \cdots f_{m-1}^*(t_{m-1}) f_m^*(t_m) 
& \text{ if $\H$ is even and $\G$ is odd}, \\
\\
f_0^*(-t_0) f_1^*(t_1) \cdots f_{m-1}^*(t_{m-1}) f_m^*(-t_m)f_n^*(-1) 
& \text{ if $\H$ is odd and $\G$ is even.}
\end{cases} \] 
Also, write 
\[ r = f_0^*(r_0) f_1^*(t_1) \cdots f_n^*(t_n). \] 
Now apply the character $D=f_1+\cdots+f_n$, which lies in the character lattice, to both 
sides of (\ref{in-siegel}). (This character amounts to the determinant on the $\GL(n)$ part 
of the Siegel Levi.) Notice that the elements on both sides are indeed inside the Siegel Levi in $\G$. 
We conclude that 
\[ \left| t_1 t_2 \cdots t_m \right| = \left| r_1 r_2 \cdots r_n \right|. \] 

By definition of a Siegel set, on $R_c$, we have 
\[ |r_1| \ge c |r_2|, \, \dots, \, |r_{n-1} \ge c |r_n| \] 
in both even and odd cases. This implies that 
\begin{eqnarray*} 
\left| t_1 t_2 \cdots t_m \right| &=& \left| r_1 r_2 \cdots r_n \right| \\ 
&\le&  |r_1| \cdot c^{-1} |r_1| \cdot c^{-2} |r_1| \cdots c^{n-1} |r_1| \\
&=& c^{-n(n-1)/2}  |\alpha_1(r) \cdots \alpha_n(r)|^n.  
\end{eqnarray*}
Since $|t_1 t_2 \cdots t_m| \longrightarrow \infty$ we have that $|\alpha_1(r) \cdots \alpha_n(r)| \longrightarrow \infty$. 
Therefore, we conclude from (\ref{zeta3}) that 
\[ 
\left| I(\phi,f_{\tau'_{s}}) \right| \le 
\int\limits_{R_{c}} 
|\alpha_1(r) \cdots \alpha_n(r)|^{-N} p_s(|r_1 \cdots r_n|)  \, dr, \quad \forall N \in \Z, 
\]
where $p_s(|r_1 \cdots r_n|)$ is a polynomial in $|r_1 \cdots r_n|$. Since 
$|\alpha_1(r) \cdots \alpha_n(r)| \longrightarrow \infty$, the last integral converges for any fixed $s$ 
if we take $N$ to be large enough. This proves part (a). 

In order to prove part (b) start with the definition (\ref{zeta}) of $I\left(\phi,f_{\tau'_{s}}\right)$ 
and unfold the Eisenstein series as in (\ref{EisConstruct}) to get 
\begin{eqnarray*}
I(\phi,f_{\tau'_{s}}) 
&=& 
\int\limits_{\X(k) \backslash \X(\A)} \quad  
\int\limits_{\Y(k) \backslash \Y(\A)} \quad 
\int\limits_{\U_{\ell}(k) \backslash \U_{\ell}(\A)} \quad
\int\limits_{\H(k) \backslash \H(\A)} \\
&& \phi\left[ u_\ell \, y \, x \, n \, \i(h) \right] \cdot \psi(u_\ell) \cdot 
\sum\limits_{\gamma \in \P_m(k)\backslash\H(k)} f_{\tau'_{s}}(\gamma h)  
dh \, du_\ell \, dy \, dx \\
&=& 
\int\limits_{\X(k) \backslash \X(\A)} \quad 
\int\limits_{\Y(k) \backslash \Y(\A)} \quad 
\int\limits_{\U_{\ell}(k) \backslash \U_{\ell}(\A)} \quad 
\int\limits_{\P_m(k) \backslash \H(\A)} \\
&& \phi\left[ u_\ell \, y \, x \, n \, \i(h) \right] \cdot \psi(u_\ell) \cdot f_{\tau'_{s}}(\gamma h) 
dh \, du_\ell \, dy \, dx .  
\end{eqnarray*}
Writing 
\[ \int\limits_{\P_m(k) \backslash \H(\A)} = 
\int\limits_{\M_m(k) \N_m(\A) \backslash \H(\A)}  \quad 
\int\limits_{\N_m(k) \backslash \N_m(\A)} \]
and changing the order of integration we get 
\begin{eqnarray*}
I(\phi,f_{\tau'_{s}}) 
&=& 
\int\limits_{\M_m(k) \N_m(\A) \backslash \H(\A)} \quad 
\int\limits_{\N_m(k) \backslash \N_m(\A)} \quad 
\int\limits_{\X(k) \backslash \X(\A)} \quad 
\int\limits_{\Y(k) \backslash \Y(\A)} \quad 
\int\limits_{\U_{\ell}(k) \backslash \U_{\ell}(\A)} \\ 
&& \phi\left[ u_\ell \, y \, x \, n \, \i(h) \right] \cdot \psi(u_\ell) \cdot f_{\tau'_{s}}(n h) 
du_\ell \, dy \, dx \, dn \, dh .  
\end{eqnarray*} 
Now we have $f_{\tau'_{s}}(n h) = f_{\tau'_{s}}(h).$ 
Applying Lemma \ref{ginzburg-lemma} we get 
\begin{eqnarray*}
I(\phi,f_{\tau'_{s}}) 
&=& 
\int\limits_{\M_m(k) \N_m(\A) \backslash \H(\A)}  \quad
\int\limits_{\X(\A)}  \quad 
\sum\limits_{\gamma\in \ZZ_m(k)\backslash\M_m(k)}
W_\phi\left( \i(\gamma) \cdot x \cdot \i(h) \right)
f_{\tau'_{s}}(h) 
 \, dx \, dh .  
\end{eqnarray*} 
Next, because $\X$ is normalized by $\i(\gamma)$, after making a change of variables if necessary, we 
may change $\i(\gamma)\, x $ to $x\, \i(\gamma)$. Moreover, because $f_{\tau'_{s}}$ is in a space induced 
from cusp forms and $\gamma \in \M_m(k)$ we have $f_{\tau'_{s}}(\gamma\, \i(h)) = f_{\tau'_{s}}(\i(h))$. Hence, 
\begin{eqnarray*}
I(\phi,f_{\tau'_{s}}) 
&=& 
\int\limits_{\ZZ_m(k) \N_m(\A) \backslash \H(\A)} \quad
\int\limits_{\X(\A)} 
W_\phi\left( x \cdot \i(h) \right)
f_{\tau'_{s}}(h) 
 \, dx \, dh .  
\end{eqnarray*} 
Furthermore, we may change the order of integration from $x\, \i(h)$ to $\i(h)\, x$ since 
$x\, \i(h) = u'\, \i(h)\, x'$ with $x'$ in the maximal unipotent subgroup of $\G(\A)$ only involving 
non-simple roots, hence $\psi(u') = 1$. Again writing 
\[ \int\limits_{\ZZ_m(k) \N_m(\A) \backslash \H(\A)} = 
\int\limits_{\ZZ_m(\A) \N_m(\A) \backslash \H(\A)}  \quad 
\int\limits_{\ZZ_m(k) \backslash \ZZ_m(\A)} \]
we get 
\begin{eqnarray*}
I(\phi,f_{\tau'_{s}}) 
&=& 
\int\limits_{\ZZ_m(\A) \N_m(\A) \backslash \H(\A)} \quad 
\int\limits_{\X(\A)} 
W_\phi\left( \i(h) x \right)
\left[
\int\limits_{\ZZ_m(k) \backslash \ZZ_m(\A)} 
\psi(z) )f_{\tau'_{s}}(z h), dz  
\right]
\, dh \, dx  .  
\end{eqnarray*} 
Using (\ref{Wf}) we finally get 
\begin{eqnarray*}
I(\phi,f_{\tau'_{s}}) 
&=& 
\int\limits_{\ZZ_m(\A) \N_m(\A) \backslash \H(\A)} \quad 
\int\limits_{\X(\A)} 
W_\phi\left( \i(h)\, x \right)  W_{f_{\tau'_{s}}}(h) \, dx \, dh .
\end{eqnarray*}
This is part (b). Part (c) follows from the definition (\ref{zeta}) and the 
properties of the meromorphic continuation and the functional equation of 
the Eisenstein series (\ref{EisConstruct}). 
\end{proof}
 
As a consequence of Theorem \ref{id} we have the following Euler product expansion. 
With appropriate choices as in \cite[pp. 93--94]{gps} we have a factorization 
$ W_{\phi}(g) = \prod\limits_{v} W_{v}(g_{v})$ and 
$W_{f_{\tau'_{s}}}(g) = \prod\limits_{v} W_{f_{\tau'_{s}},v}(g_{v})$ such that 
\begin{equation} \label{euler}
I(\phi,f_{\tau'_{s}}) = \prod_{v} \xi\left( W_{v}, W_{f_{\tau'_{s}},v} \right), 
\end{equation} 
where 
\begin{equation} \label{xi}
\xi\left( W_{v}, W_{f_{\tau'_{s}},v} \right) 
= \int_{\U_{\ell}(k_{v}) \backslash \H(k_{v})} \int_{\X(k_{v})} 
W_{v}(g_{v} x_{v}) W_{f_{\tau'_{s}},v}(g_{v}) dx dg. 
\end{equation}

\subsection{The Unramified Computations}

In this section we compute the zeta integral at the unramified places. This local 
analysis allows us to connect our integrals on the right hand side of (\ref{xi}) 
to local $L$-functions.  

Let $\pi = \otimes_{v} \pi_{v}$ and $\tau'_{s} = \otimes \tau'_{s,v}$ be 
as before. Recall that $\tau'_{s}$ is as in (\ref{tauprime}) in terms of a 
representation $\tau$ of $\GL(m,\A)$ and an id\`ele class character $\omega$ 
of $\GL(1,\A)$. 

\begin{thm} \label{unram}
Let $v$ be a non-archimedean place of $k$ such that $\pi_{v}$ and $\tau'_{s,v}$ are 
unramified. 
If $W^{0}_{\pi_{v}}$ and $W^{0}_{\tau'_{s,v}}$ are class one functions for the local, 
unramified representations $\pi_{v}$ and $\tau'_{s,v}$ respectively, then 
\[ \int\limits_{\X(k_{v})} \quad \int\limits_{\U_{\ell}(k_{v}) \backslash \H(k_{v})} 
W^{0}_{\pi_{v}}(gx) W^{0}_{\tau'_{s,v}}(g) dg dx 
= \begin{cases}
\frac{L(s,\pi_{v} \times \tau_{v})}{L(2s,\tau_{v},\wedge^{2}\otimes\omega^{-1})}, & \mbox{if }\G=\GSpin(2n+1),\\
&\\
\frac{L(s,\pi_{v} \times \tau_{v})}{L(2s,\tau_{v},\sym^{2}\otimes\omega^{-1})}, & \mbox{if }\G=\GSpin(2n) .
\end{cases} \]
\end{thm}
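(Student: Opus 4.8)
The plan is to follow the unramified local computations of Gelbart and Piatetski-Shapiro \cite{gps} (the case $m=n$) and of Ginzburg \cite{ginzburg} (the case $m<n$), inserting the extra bookkeeping forced by the similitude factor and, in the even case, by the quasi-split structure. The starting point is the Iwasawa decomposition $\H(k_{v})=\U_{\H}(k_{v})\,T_{\H}(k_{v})\,K_{\H}$: using the right $K_{\H}$-invariance of $W^{0}_{\tau'_{s,v}}$ and the fact that $W_{f_{\tau'_{s}}}$ is a genuine quasi-Whittaker function (transforming under $(\ZZ_{m},\psi)$ on the left and invariant under $\N_{m}$), the outer integral over $\U_{\ell}(k_{v})\backslash\H(k_{v})$ in (\ref{xi}) collapses onto the torus $T_{\H}(k_{v})$ against the modulus characters of $\P_{m}$ and of $\U_{\ell}$ and the restriction of $\tau'_{s,v}$. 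The inner integral over $\X(k_{v})$ is treated exactly as in \cite{ginzburg}: one moves $x$ past $\i(h)$ using the commutation relations among the root groups recorded in Section \ref{pre-sec}, producing elements of $\U_{\ell}$ and of the ``non-simple'' part of the maximal unipotent of $\G$, on which $\psi$ is trivial, so that the $x$-integration contributes at most one local zeta factor; when $m=n$ the group $\X$ is trivial and one recovers \cite{gps} verbatim. The net effect is to express the local integral as a sum over the dominant coweights $\lambda$ of $T_{\H}$ of a product $q^{-\langle s,\lambda\rangle}\,\delta^{1/2}(a_{\lambda})\,W^{0}_{\pi_{v}}(\i(a_{\lambda}))\,W^{0}_{\tau'_{s,v}}(a_{\lambda})$, with $\i(a_{\lambda})$ computed by Lemma \ref{torus-i}.

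Next I would substitute the Casselman--Shalika formula for both Whittaker functions: for $\tau'_{s,v}$ this is (a power of $q^{-s}$ times) the Schur polynomial of the Satake parameter $\{\beta_{1},\dots,\beta_{m}\}$ of $\tau_{v}$, and for $\pi_{v}$ it is the Weyl character of the standard representation of the dual group $\h{\G}$, namely $\GSp(2n,\C)$ in the odd case and $\GSO(2n,\C)\rtimes\operatorname{Gal}(K_{w}/k_{v})$ in the even quasi-split case, for which I would use the twisted Casselman--Shalika formula for ${}^{2}D_{n}$ exactly as for quasi-split $\SO(2n)$ in \cite[\S 7]{cpss-clay}. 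By the explicit unramified transfer computed just above (Proposition \ref{omega-tilde}), $\iota\circ\phi_{v}$ has Satake parameter $\{\chi_{1},\dots,\chi_{n-1},\chi_{n}\mu_{v},\chi_{n}^{-1}\chi_{0},\dots,\chi_{1}^{-1}\chi_{0}\}$ in $\GL(2n,\C)$ with $\chi_{0}=\omega_{\pi_{v}}$; the ``$\omega$-twisted self-dual'' shape of this parameter, in which $\chi_{i}$ is paired with $\chi_{i}^{-1}\chi_{0}$, is what will produce the twisted square $L$-factor in the denominator. The resulting lattice sum is then evaluated by the same Cauchy- and Littlewood-type symmetric-function identities as in \cite{gps,ginzburg}: the ``diagonal'' part collects into $\prod_{i=1}^{2n}\prod_{j=1}^{m}(1-\alpha_{i}\beta_{j}q^{-s})^{-1}=L(s,\pi_{v}\times\tau_{v})$, with $\{\alpha_{i}\}$ the $2n$-element parameter above, and the remaining, self-paired terms collect into $L(2s,\tau_{v},\wedge^{2}\otimes\omega^{-1})^{-1}$ when $\G=\GSpin(2n+1)$ and $L(2s,\tau_{v},\sym^{2}\otimes\omega^{-1})^{-1}$ when $\G=\GSpin(2n)$. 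The alternation between $\wedge^{2}$ and $\sym^{2}$ reflects the fact that $\H=\H(m)$ is taken of type opposite to $\G$, so that the unipotent radical of the Siegel parabolic of the dual group $\h{\H}$ --- which governs the normalization of the Eisenstein series (\ref{EisConstruct}) --- is $\wedge^{2}$ of the standard representation of $\GL(m,\C)$ when $\h{\H}=\GSO(2m,\C)$ (i.e. $\H$ even, hence $\G$ odd) and $\sym^{2}$ of it when $\h{\H}=\GSp(2m,\C)$ (i.e. $\H$ odd, hence $\G$ even); the twist by $\omega^{-1}$ is precisely the similitude bookkeeping coming from the factor $\otimes\,\omega$ in (\ref{tauprime}).

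The main obstacle I expect is the even quasi-split case, where two points must be checked with care: first, that the twisted Casselman--Shalika formula for $\GSpin^{*}(2n)$ is available in a usable form; and second, that the unramified transfer $\iota$ is compatible with the Satake isomorphism, so that the stated Satake parameter of $\Pi_{v}$ is correct --- in particular, the appearance of $\mu_{v}=\mu_{K_{w}/k_{v}}$ in the middle coordinate and the pairing $\chi_{i}\leftrightarrow\chi_{i}^{-1}\chi_{0}$ are exactly the features that force the denominator to be the $\omega^{-1}$-twisted $\sym^{2}$ $L$-function rather than an untwisted one. Once these inputs are in place, the remainder --- identifying which roots of $\G$ lie in $\U_{\ell}$, $\X$ and $\Y$, computing the various modulus characters, and matching the self-paired part of the lattice sum with the symmetric or exterior square $L$-factor --- is routine but lengthy and follows \cite{gps} and \cite{ginzburg} essentially line by line, now fed with the $\GSpin$ root data of Section \ref{pre-sec} and the formula for $\i$ on the torus from Lemma \ref{torus-i}.
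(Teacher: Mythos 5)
Your outline reaches the right ingredients (Iwasawa decomposition, Casselman--Shalika, Cauchy/Littlewood identities, the $\omega$-twisted self-dual shape of the Satake parameter from Proposition \ref{omega-tilde}, and the parity $\wedge^{2}$ vs.\ $\sym^{2}$ coming from $\hat{\H}$), and the end state you describe is correct. But the way you dispose of the $\X$-integral is not what the paper (or Ginzburg) does, and that step is the crux of the theorem when $m<n$.

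The paper's proof is a \emph{decreasing induction on the $\GL$-rank $m$}, exactly as in Ginzburg's Theorem B: the base case is $m=n$, where $\X$, $\Y$ and $\U_{\ell}$ are trivial and the local integral is the GPS-style lattice sum over $T_{\H}$ that you describe; for $m<n$ one does \emph{not} get a single lattice sum directly, because the integrand $W^{0}_{\pi_{v}}(gx)$ is a genuine function on the noncompact, $m(n-m)$-dimensional group $\X(k_{v})$, and one must instead perform a Fourier expansion and change of variables that relates the integral at rank $m$ to the one at rank $m+1$, eventually landing on the $m=n$ base case. Your claim that ``the $x$-integration contributes at most one local zeta factor'' after moving $x$ past $\i(h)$ is an oversimplification; the commutation relations send $x$ into other unipotent pieces on which $\psi$ is not automatically trivial, and tracking this is precisely the content of the inductive step. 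The remark that ``the net effect is to express the local integral as a sum over dominant coweights of $T_{\H}$'' is true only once the induction has collapsed the extra unipotent integrations, so as written your argument skips the part that distinguishes $m<n$ from $m=n$.

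The rest of the sketch is fine and more explicit than the paper is: you are right that in the quasi-split even case one needs the twisted Casselman--Shalika formula for ${}^{2}D_{n}$ as in \cite[\S 7]{cpss-clay}, and right that the appearance of $\mu_{v}$ in the middle parameter and the pairing $\chi_{i}\leftrightarrow\chi_{i}^{-1}\chi_{0}$ are exactly what twist the denominator by $\omega^{-1}$. Just replace the vague treatment of the $\X$-integral with the explicit decreasing-induction reduction to the $m=n$ base case, and the proposal would match the paper's argument.
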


\begin{proof}
The proof uses a decreasing induction on the $\GL$ rank and is completely similar to the proof of 
\cite[Theorem B]{ginzburg}. The starting step of the induction, as in Ginzburg's theorem, is similar 
to the case of $\GL(n)\times\SO(2n+1)$ in \cite{gps}. One uses the Casselman-Shalika formula \cite{cs} 
for the calculation of the Whittaker functions. 
\end{proof} 
 
\subsection{Global Zeta Integral and Partial $L$-functions} 

We now state the major consequence of the above discussions in the global setting we need. 
We use the notation of the earlier sections. 

\begin{thm} \label{rs-Lqt}
Let $\pi$ be a unitary, cuspidal, globally generic, automorphic representation of $\G(n,\A)$ and 
let $\tau$ be a unitary, cuspidal representation of $\GL(m,\A)$. Assume that $m \le n$. 
Moreover, fix an id\`ele class character $\omega$ of $\GL(1,\A)$. For a sufficiently large finite set of places 
$S$, including all the archimedean places and the places where ramification occurs, we 
have 
\begin{eqnarray} \label{Lqt} 
I\left( \phi,f_{\tau'_{s}} \right) 
= \begin{cases}
\frac{L^{S}(s,\pi \times \tau)}{L^{S}(2s,\tau,\wedge^{2}\otimes\omega^{-1})} \cdot R(s), 
& \mbox{if }\G=\GSpin(2n+1),\\
&\\
\frac{L^{S}(s,\pi \times \tau)}{L^{S}(2s,\tau,\sym^{2}\otimes\omega^{-1})} \cdot R(s), 
& \mbox{if }\G=\GSpin(2n),
\end{cases} 
\end{eqnarray}
where $R(s)$ is a meromorphic function, which can be made holomorphic and nonzero in 
a neighborhood of any given $s=s_{0}$ for an appropriate choice of $f$. 
\end{thm}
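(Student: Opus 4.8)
The plan is to factor the global zeta integral $I(\phi, f_{\tau'_{s}})$ into an Euler product, to evaluate the factors at the good places by Theorem \ref{unram}, and then to control the remaining finite product of local integrals at the bad places.

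First I would pass to a right half-plane $\Re(s) \gg 0$. There the Eisenstein series is given by the absolutely convergent sum (\ref{EisConstruct}), the local integrals $\xi(W_v, W_{f_{\tau'_s},v})$ of (\ref{xi}) converge, and the factorization (\ref{euler}) holds. Taking the global data $\phi \in \pi$ and $f_{\tau'_{s}}$ to be pure tensors whose components at the places $v \notin S$ are the normalized class-one vectors, Theorem \ref{unram} evaluates the factor at each such $v$ as $L(s,\pi_v\times\tau_v)/L(2s,\tau_v,\wedge^{2}\otimes\omega^{-1})$ when $\G=\GSpin(2n+1)$, and with $\sym^{2}$ in place of $\wedge^{2}$ when $\G=\GSpin(2n)$. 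Multiplying over all $v\notin S$ produces the partial $L$-functions appearing on the right of (\ref{Lqt}), and I would set $R(s)=\prod_{v\in S}\xi(W_v, W_{f_{\tau'_s},v})$. Thus (\ref{Lqt}) holds for $\Re(s)\gg0$; since $I(\phi,f_{\tau'_{s}})$ continues meromorphically to all of $\C$ by Theorem \ref{id}(c), and both $L^{S}(s,\pi\times\tau)$ and $L^{S}(2s,\tau,\wedge^{2}\otimes\omega^{-1})$ (resp. with $\sym^{2}$) continue meromorphically by the Langlands--Shahidi method, the identity (\ref{Lqt}), with $R(s)$ meromorphic, persists for all $s$.

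It then remains to arrange that $R(s)$ is holomorphic and nonzero in a neighborhood of any prescribed $s_{0}$. Because $R(s)$ is a finite product over $v\in S$, it suffices to do this for each local factor $\xi(W_v, W_{f_{\tau'_s},v})$ separately, with the data at $v$ chosen freely (the global $\phi$ and $f_{\tau'_{s}}$ being taken as pure tensors with the prescribed components at the places of $S$). For non-archimedean $v\in S$, a standard argument --- Bernstein's principle of meromorphic continuation, or the rationality/gcd computation as in \cite{gps,ginzburg} --- shows that for fixed local data the integral is a rational function of $q_{v}^{-s}$, hence meromorphic; one then chooses $f_{\tau'_{s},v}$ supported in a small neighborhood of a point of the open cell together with a suitable $W_v$, so that the integration in (\ref{xi}) localizes, the integral reduces to that of a fixed Schwartz function against an additive character, and its $s$-dependence is concentrated in a single nonvanishing holomorphic (in fact exponential) factor. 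This makes $\xi(W_v, W_{f_{\tau'_s},v})$ holomorphic and nonzero near $s_{0}$. The combinatorics of $\X$, $\Y$, $\U_{\ell}$ and of the embedding $\i$ are the same as in Ginzburg's orthogonal and symplectic setting, and were already used in the proofs of Theorems \ref{id} and \ref{unram}, so the choice of data and the localization carry over; the extra central $\GL(1)$ of the $\GSpin$ groups only introduces the character $\omega$ and is harmless. The archimedean places of $S$ are handled by the analogous archimedean non-vanishing, via the same localization of (\ref{xi}).

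The step I expect to be the main obstacle is this last one: establishing, in the $\GSpin$ setting, the holomorphy and non-vanishing near $s_{0}$ of the local zeta integrals at the ramified and archimedean places of $S$. Here one must check that the internal structure of (\ref{xi}) --- how $\X$ sits relative to the embedded $\H$, and the behavior of $W_{f_{\tau'_s},v}$ under the Siegel Levi $\GL(1)\times\GL(m)$ --- parallels the special-orthogonal and symplectic cases closely enough for the classical choice of local data and the ensuing localization to remain valid. The rest --- assembling the Euler product, invoking Theorem \ref{unram} at the good places, and patching together the meromorphic continuations --- is routine bookkeeping.
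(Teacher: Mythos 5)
Your proposal matches the paper's proof: both factor $I(\phi,f_{\tau'_s})$ via (\ref{euler}), invoke Theorem \ref{unram} at the good places, set $R(s)$ to be the finite product of local integrals at the bad places, and then appeal to the Soudry-style localization argument to make $R(s)$ holomorphic and nonvanishing near any prescribed $s_0$. (The paper's text reads ``over $v \notin S$'' in defining $R(s)$, which is evidently a slip for $v \in S$, as you have it.)
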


\begin{proof}
The theorem follows from Theorem \ref{unram} if we set $R(s)$ to be equal 
to the product of the local zeta integrals (\ref{xi}) over $v \not \in S$. The fact that 
$R(s)$ is meromorphic is clear. To show that it can be made holomorphic in 
the neighborhood of any point $s=s_{0}$ the argument is completely similar 
to the one for the case of $\GL(m) \times \SO(2n+1)$ in \cite[\S\S 6-7]{soudry-mem}. 
\end{proof}

\section{The Transferred Representation}

In this section $k$ will continue to denote a number field and $\A=\A_k$ will denote its ring of ad\`eles. 
Let $\G=\G(n)$ denote $\GSpin(2n+1)$, the split $\GSpin(2n)$, or the 
quasi-split $\GSpin^*(2n)$ associated with a quadratic extension $K/k$ of number fields. We will 
refer to the case of $\G(n)=\GSpin(2n+1)$ as the odd case and the remaining cases as the even 
case. 

\subsection{The Global Transfer}

Let $\pi$ be a irreducible, generic, unitary, cuspidal, automorphic representation of $\G(\A)$. Let $\Pi$ 
be a transfer of $\pi$ to $\GL(2n,\A_k)$ as in \cite[Theorem 1.1]{duke} and
Theorem \ref{q-split-weak}. By the classification of automorphic representations of general linear 
groups \cite{js1,js2} we know that $\Pi$ is a constituent of some automorphic representation 
\begin{equation}\label{Sigma} 
\Sigma = \mbox{Ind}\left( |\det|^{r_1} \sigma_1 \otimes \cdots \otimes |\det|^{r_t} \sigma_t\right) 
\end{equation}
with $\sigma_i$ a unitary, cuspidal, automorphic representation of $\GL(n_i,\A)$, $r_i\in\R$, and 
$n_1+n_2+\cdots+n_t = 2n$.  

Let $\omega = \omega_\pi$ denote the central character of $\pi$. Then $\omega$ is a unitary id\`ele class  
character of $k$ and we have shown that $\Pi$ is nearly equivalent to $\w{\Pi} \otimes \omega$. 

Our first goal in this section is to prove the fact that all the exponents $r_i=0$ in (\ref{Sigma}).  
In order to do so, we follow the method of Gelbart, Ginzburg, Piatetski-Shapiro, Rallis and Soudry 
as explained for classical groups in \cite[\S 1]{soudry}. We explained in Section \ref{grs} 
how to generalize parts of this theory to the cases of odd and even $\GSpin$ groups.

We start with a lemma about twisted exterior and symmetric (partial) 
$L$-functions. For its proof we need a result on holomorphy of twisted $L$-function 
in the half plane $\Re(s) > 1$. This result and much more are the subject of two 
works currently being completed, one by Dustin Belt in his thesis at Purdue University, 
and the other by Suichiro Takeda which has appeared as a preprint. 

\begin{prop}\label{twisted-prop}(\cite{belt} and \cite{takeda})  
Let $\chi$ be an arbitrary id\`ele class character and let $\tau$ be a unitary, cuspidal, 
automorphic representation of $\GL(m,\A)$. Let $S$ be a finite set of 
places of $k$ containing all the archimedean places and all the non-archimedean 
places at which $\pi$ ramifies. Then the partial twisted $L$-functions 
$L^{S}(s,\pi,\wedge^{2}\otimes\chi)$ and $L^{S}(s,\pi,\sym^{2}\otimes\chi)$ 
are holomorphic in $\Re(s) > 1$. 
\end{prop}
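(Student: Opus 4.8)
The plan is to deduce the proposition from the factorization of a Rankin--Selberg $L$-function of $\tau$ against its $\chi$-twist, which reduces matters to the holomorphy in $\Re(s)>1$ of each twisted square $L$-function separately, and then to invoke the two complementary analytic inputs assembled in \cite{belt} and \cite{takeda}. Throughout, $\tau$ denotes the unitary, cuspidal representation of $\GL(m,\A)$ of the statement. The starting point is the identity of partial $L$-functions
\[ L^{S}\!\left(s,\tau\times(\tau\otimes\chi)\right) = L^{S}(s,\tau,\sym^{2}\otimes\chi)\, L^{S}(s,\tau,\wedge^{2}\otimes\chi), \]
valid because $\C^{m}\otimes\C^{m}=\sym^{2}\C^{m}\oplus\wedge^{2}\C^{m}$ and both sides are Euler products over $v\notin S$. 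The left-hand side is a Rankin--Selberg $L$-function for $\GL(m)\times\GL(m)$, so by Jacquet, Piatetski-Shapiro and Shalika it is holomorphic for $\Re(s)>1$ --- indeed entire unless $\tau\otimes\chi\cong\w\tau$, in which case its only pole with $\Re(s)\ge 1$ is a simple one at $s=1$. Hence the product on the right is holomorphic for $\Re(s)>1$, and the remaining task is to exclude a pole of one factor being masked by a zero of the other.

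If $\chi=\chi_{0}^{2}$ for some id\`ele class character $\chi_{0}$, this is immediate, since $\sym^{2}(\tau)\otimes\chi=\sym^{2}(\tau\otimes\chi_{0})$ and likewise for $\wedge^{2}$, so both $L$-functions become untwisted square $L$-functions of the unitary cuspidal representation $\tau\otimes\chi_{0}$ of $\GL(m,\A)$, whose holomorphy for $\Re(s)>1$ is classical --- Jacquet--Shalika for the exterior square, Shahidi and Bump--Ginzburg for the symmetric square. The force of the proposition is therefore the arbitrary, non-square twist, and one needs a handle on each factor that survives it.

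For the twisted exterior square I would use the Langlands--Shahidi method: $L^{S}(s,\tau,\wedge^{2}\otimes\chi)$, up to the substitutions $s\mapsto 2s$ and $\chi\mapsto\chi^{-1}$, occurs as a normalizing $L$-function in the constant term of an Eisenstein series on a quasi-split group of $\GSpin$ type with Siegel Levi of the form $\GL(m)\times\GL(1)$, the $\GL(1)$ carrying the twist --- the same family that appears in the denominators of Theorems \ref{unram} and \ref{rs-Lqt}. Shahidi's holomorphy statement in the form \cite[Prop.~2.1]{kim-shahidi-annals} then gives holomorphy for $\Re(s)>1$, since the Eisenstein series is holomorphic there and the required Assumption~A --- holomorphy and non-vanishing of the normalized intertwining operator for $\Re(s)>1$ --- is available in this $\GSpin$-type setting by \cite{w-kim}, exactly as in the proof of Proposition \ref{nice}. (Alternatively the Jacquet--Shalika exterior-square integral accommodates an arbitrary twist directly.) For the twisted symmetric square, where the twist cannot be absorbed so cheaply, I would instead follow the Bump--Ginzburg global integral built from exceptional (theta) representations, in the twisted form of \cite{takeda}: that integral represents $L^{S}(s,\tau,\sym^{2}\otimes\chi)$, the Eisenstein series entering it is holomorphic for $\Re(s)>1$, and the unramified local computation identifies the integral with the $L$-function, yielding holomorphy in that range. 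Once either factor is controlled on its own, the factorization of the first paragraph forces the other.

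The main obstacle is precisely this separation: the Rankin--Selberg identity only constrains the product $L^{S}(s,\tau,\sym^{2}\otimes\chi)L^{S}(s,\tau,\wedge^{2}\otimes\chi)$, so the proof cannot proceed without an independent analytic source for each piece, and the arbitrary non-square twist is exactly what blocks a reduction to the classical untwisted symmetric and exterior square $L$-functions. Concretely, the substantive work is the verification of Assumption~A in the twisted $\GSpin$-type setting (supplied by \cite{w-kim}) on the exterior-square side and the full twisted Bump--Ginzburg machinery on the symmetric-square side; the reductions around them are formal. This is the content of \cite{belt} and \cite{takeda}.
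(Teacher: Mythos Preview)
The paper does not supply its own proof of this proposition; it is quoted from \cite{belt} (exterior square) and \cite{takeda} (symmetric square), with only the remarks following the statement indicating the methods. Your outline is broadly in line with what those works do: Takeda treats $L^{S}(s,\tau,\sym^{2}\otimes\chi)$ via a twisted Bump--Ginzburg integral, exactly as you describe, and Belt treats $L^{S}(s,\tau,\wedge^{2}\otimes\chi)$ via the Jacquet--Shalika integral --- which is your parenthetical alternative, not your primary route.

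A word of caution about that primary route through the Langlands--Shahidi constant term. The reference to \cite[Prop.~2.1]{kim-shahidi-annals} is off target: that result yields \emph{entirety} after a highly ramified twist, not holomorphy in $\Re(s)>1$ for arbitrary cuspidal data. More substantively, the constant-term analysis by itself controls only the ratio $L^{S}(2s-1,\tau,\wedge^{2}\otimes\chi)/L^{S}(2s,\tau,\wedge^{2}\otimes\chi)$, and separating numerator from denominator in the strip $1<\Re(s)<2$ is precisely the difficulty. Within the paper's own logic, the assertion that ``the Eisenstein series is holomorphic there'' is what Proposition~\ref{EisPole} establishes, and its proof invokes Lemma~\ref{twsited-L}, which in turn rests on the very Proposition~\ref{twisted-prop} under discussion --- so this route is circular as written, and one needs an independent analytic input, which is exactly what the integral representations of \cite{belt} and \cite{takeda} provide. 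Finally, your closing claim that controlling one factor ``forces the other'' via the Rankin--Selberg product requires not just holomorphy but also non-vanishing of the controlled factor in $\Re(s)>1$; otherwise a zero of one could mask a pole of the other. That non-vanishing is obtainable, but it is an extra step, and in practice both factors are handled independently.
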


We remark that Jacquet and Shalika proved that $L^{S}(s,\pi,\wedge^{2}\otimes\chi)$ 
has a meromorphic continuation to a half place $\Re(s) > 1 - a$ with $a>0$ depending 
on the representation \cite[\S 8, Theorem 1]{jac-shalika-ann-arbor}. 
Proposition \ref{twisted-prop} in the case of $\wedge^{2}\otimes\omega$ can also be 
dug out of their work. However, D. Belt's results show this for all $s$, with possible poles at $s=0,1$. 

As far as we know, an analogue of Jacquet-Shalika's result for twisted symmetric square 
was not available. For $m=3$ it follows from results of W. Banks \cite{banks} following 
the untwisted ($\chi=1$) results of Bump and Ginzburg \cite{bump-ginzburg-annals}. 
S. Takeda's results build on this line of work. 

\begin{lem} \label{twsited-L}
Let $m$ be a positive integer and let $\tau$ be an irreducible, unitary, cuspidal, 
automorphic representation of $\GL(m,\A)$. Let $\omega$ be an id\`ele class character 
and let $s \in \C$. Let $S$ be a finite set of places of $k$ including all the archimedean 
ones such the data is unramified outside $S$.  
\begin{itemize} 
\item[(a)] Both $L^S(s, \tau, \wedge^2 \otimes \omega^{-1})$ and 
$L^S(s, \tau, \sym^2 \otimes \omega^{-1})$ 
are holomorphic and non-vanishing for $\Re(s) > 1$. 
\item[(b)] 
If either of the above $L$-functions has a pole at $s=1$, then 
$\tau \cong \w{\tau} \otimes \omega$.  
\end{itemize}
\end{lem}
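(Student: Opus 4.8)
The plan is to deduce both parts from the standard Rankin–Selberg theory for $\GL(m)\times\GL(m)$ together with Proposition \ref{twisted-prop}. First I would recall the factorization of the Rankin–Selberg $L$-function: for the unitary cuspidal $\tau$ of $\GL(m,\A)$ and the idèle class character $\omega$, one has
\begin{equation*}
L^{S}\!\left(s,\tau\times(\w{\tau}\otimes\omega^{-1})\right)
= L^{S}(s,\tau,\sym^{2}\otimes\omega^{-1})\,L^{S}(s,\tau,\wedge^{2}\otimes\omega^{-1}),
\end{equation*}
which holds place-by-place at the unramified $v\notin S$ by decomposing $\C^{m}\otimes\C^{m}=\sym^{2}\C^{m}\oplus\wedge^{2}\C^{m}$. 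For part (a), the left-hand side is the Rankin–Selberg $L$-function of the pair of cuspidal representations $\tau$ and $\w\tau\otimes\omega^{-1}$ of $\GL(m,\A)$, which by Jacquet–Piatetski-Shapiro–Shalika is holomorphic and non-vanishing for $\Re(s)>1$ (indeed the local Euler factors converge absolutely there, forcing non-vanishing, and there are no poles off $s=1$ for cuspidal $\times$ cuspidal). Combined with Proposition \ref{twisted-prop}, which gives holomorphy of each of the two twisted factors individually in $\Re(s)>1$, the product being non-vanishing in $\Re(s)>1$ forces each factor to be non-vanishing there as well, since neither can have a zero that is not cancelled. This yields (a).

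For part (b), suppose one of the two twisted $L$-functions, say $L^{S}(s,\tau,\wedge^{2}\otimes\omega^{-1})$, has a pole at $s=1$ (the symmetric-square case is identical). By the factorization above, $L^{S}(s,\tau\times(\w{\tau}\otimes\omega^{-1}))$ then has a pole at $s=1$: the other twisted factor is holomorphic at $s=1$ by Proposition \ref{twisted-prop} applied slightly to the right, or more simply one observes that a pole in one factor cannot be cancelled by a zero of the other since, by part (a) extended to a neighbourhood of $s=1$ from the right together with the known structure, the other factor is finite and non-zero there. But the Rankin–Selberg $L$-function $L^{S}(s,\tau\times\tau')$ of two unitary cuspidal representations $\tau,\tau'$ of $\GL(m,\A)$ has a pole at $s=1$ if and only if $\tau'\cong\w\tau$. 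Applying this with $\tau'=\w\tau\otimes\omega^{-1}$ gives $\w\tau\otimes\omega^{-1}\cong\w\tau$, i.e. $\w{\w\tau}\cong\w\tau\otimes\omega$, that is $\tau\cong\w\tau\otimes\omega$, as claimed.

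The main obstacle is making the cancellation argument at $s=1$ itself fully rigorous: a priori the pole of one twisted factor could be masked by a zero of the other, and the non-vanishing statement of part (a) is only asserted for $\Re(s)>1$, not at $s=1$. The clean way around this is to invoke the precise analytic input of Proposition \ref{twisted-prop} — namely that $L^{S}(s,\tau,\wedge^{2}\otimes\chi)$ and $L^{S}(s,\tau,\sym^{2}\otimes\chi)$ are holomorphic in the open half-plane $\Re(s)>1$ and, by the cited results of Belt and Takeda, meromorphic on all of $\C$ with possible poles only at $s=0,1$ — so that near $s=1$ each factor is a meromorphic function with at worst a simple (or finite-order) pole and no zero forced on it; then the order of the pole of the product at $s=1$ is the sum of the orders of the two factors, and a pole of the product at $s=1$ indeed forces a pole of at least one factor there. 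Conversely, if a twisted factor has a pole at $s=1$, the product has a pole at $s=1$, and one concludes via the Rankin–Selberg criterion as above. An alternative route, avoiding the finer results, is to use that $L^{S}(s,\tau\times(\w\tau\otimes\omega^{-1}))$ has a \emph{simple} pole at $s=1$ exactly when $\tau\cong\w\tau\otimes\omega$ and is holomorphic there otherwise; then a pole of either twisted factor at $s=1$ immediately produces the pole of the product, giving (b) directly.
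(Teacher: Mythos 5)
Your overall strategy --- factor a Rankin--Selberg $L$-function for $\GL(m)\times\GL(m)$ into the twisted symmetric and exterior square $L$-functions, combine the Jacquet--Shalika holomorphy/non-vanishing for $\Re(s)>1$ with the holomorphy supplied by Proposition~\ref{twisted-prop} to get~(a), and then read off~(b) from the pole-at-$s=1$ criterion --- is exactly the one the paper uses. But there are two concrete problems.

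First, the second member of the Rankin--Selberg pair should be $\tau\otimes\omega^{-1}$, \emph{not} $\w{\tau}\otimes\omega^{-1}$. With the convention underlying \cite[Prop.~(3.6)]{js2} the unramified local factor of $L(s,\tau\times\tau')$ is $\prod_{i,j}(1-\alpha_i\beta_j q^{-s})^{-1}$, so the decomposition $\C^m\otimes\C^m=\sym^2\C^m\oplus\wedge^2\C^m$ produces the identity
\[
L^S\!\left(s,\tau\times(\tau\otimes\omega^{-1})\right)
= L^S(s,\tau,\wedge^2\otimes\omega^{-1})\,L^S(s,\tau,\sym^2\otimes\omega^{-1});
\]
inserting the contragredient instead would give the decomposition of $\C^m\otimes(\C^m)^{*}$ into adjoint plus trivial, which is a different pair of $L$-functions. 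This slip also garbles your final chain of isomorphisms: $\w{\tau}\otimes\omega^{-1}\cong\w{\tau}$ does not rearrange to $\tau\cong\w{\tau}\otimes\omega$, whereas $\tau\otimes\omega^{-1}\cong\w{\tau}$ --- which is what the Jacquet--Shalika pole criterion gives when applied to $\tau\times(\tau\otimes\omega^{-1})$ --- does.

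Second, you correctly flag the danger that at $s=1$ the pole of one twisted factor could be cancelled by a zero of the other, but neither route you offer actually closes that gap: knowing the factors are meromorphic with poles confined to $s\in\{0,1\}$ says nothing about their zeros, and your ``alternative route'' again silently assumes no cancellation. The paper supplies the missing input in one stroke: by \cite[Theorem~1.1]{shahidi-pac} both $L^S(s,\tau,\wedge^2\otimes\omega^{-1})$ and $L^S(s,\tau,\sym^2\otimes\omega^{-1})$ are non-vanishing on the whole line $\Re(s)=1$, so a pole at $s=1$ of one factor cannot be absorbed by a zero of the other, the product therefore has a pole, and \cite[Prop.~(3.6)]{js2} then yields $\tau\cong\w{\tau}\otimes\omega$.
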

 
 \begin{proof}
We have 
\[ L^S(s, \tau \otimes (\tau \otimes \omega^{-1})) 
=  L^S(s, \tau, \wedge^2 \otimes \omega^{-1}) L^S(s, \tau, \sym^2 \otimes \omega^{-1}). \] 

The left hand side is holomorphic and non-vanishing for $\Re(s) > 1$ by 
\cite[Proposition (3.6)]{js2}.  Moreover, by Proposition \ref{twisted-prop} both of 
the $L$-functions on the right hand side are holomorphic for $\Re(s) > 1$. Therefore, both 
are non-vanishing there, as well. This is part (a). 

On the other hand, by \cite[Theorem 1.1]{shahidi-pac} both $L$-functions on 
the right hand side are non-vanishing on $\Re(s) = 1$. If one has a pole at $s=1$, 
then the left hand side must have a pole at $s=1$. Again by \cite[Proposition (3.6)]{js2} 
the two representations $\tau$ and $\tau \otimes \omega^{-1}$ must be contragredient of 
each other, i.e., $\tau \cong \w{\tau} \otimes \omega$. This is part (b). 
 \end{proof}

\begin{prop} \label{EisPole}
Let $\tau$ be an irreducible, unitary, cuspidal representation of $\GL(m,\A)$ 
and let $\omega$ be an id\`ele class character. 
Fix $s_0 \in \C$ with $\Re(s_0) \ge 1$ and assume that 
the Eisenstein series $E(g, f_{\tau'_{s}})$ introduced in (\ref{EisConstruct}) has a pole at $s=s_0$.  
Then, $s_0=1$ and $L^S(s, \tau, \wedge^2 \otimes \omega^{-1}) $ has a simple pole at $s=1$ 
in the odd case while $L^S(s, \tau, \sym^2 \otimes \omega^{-1}) $ has a simple pole at $s=1$ 
in the even case. 
\end{prop}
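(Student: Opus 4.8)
The plan is to exploit the meromorphic continuation and functional equation of the zeta integral $I(\phi, f_{\tau'_s})$ from Theorem~\ref{id}(c), together with the Euler product factorization \eqref{euler}--\eqref{xi} and its unramified evaluation in Theorem~\ref{rs-Lqt}. First I would observe that a pole of the Eisenstein series $E(g,f_{\tau'_s})$ at $s=s_0$ forces, for a suitable choice of section $f$ and cusp form $\phi$, a pole of the global integral $I(\phi, f_{\tau'_s})$ at $s=s_0$; indeed part (c) of Theorem~\ref{id} ties $I$ directly to the Eisenstein series, and one uses the fact that the local integrals $\xi(W_v, W_{f_{\tau'_s},v})$ can be chosen nonzero and holomorphic near $s_0$ (the function $R(s)$ of Theorem~\ref{rs-Lqt} can be made holomorphic and nonzero near any given point). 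Then, applying Theorem~\ref{rs-Lqt} with $m=n$ so that the unipotent subgroup $\N$ is trivial and the integral is exactly the Gelbart--Piatetski-Shapiro one, a pole of $I$ at $s_0$ with $\operatorname{Re}(s_0)\ge 1$ must come from the quotient
\[
\frac{L^{S}(s,\pi\times\tau)}{L^{S}(2s,\tau,\wedge^2\otimes\omega^{-1})}
\quad\text{or}\quad
\frac{L^{S}(s,\pi\times\tau)}{L^{S}(2s,\tau,\sym^2\otimes\omega^{-1})},
\]
according as $\G$ is odd or even.

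Next I would pin down the numerator. The Rankin--Selberg type $L$-function $L^{S}(s,\pi\times\tau)$ for $\G\times\GL(n)$ is known (by the Langlands--Shahidi method, via Proposition~\ref{nice}, or equivalently from the integral representation) to be holomorphic for $\operatorname{Re}(s)>1$, and at $s=1$ its only possible pole is simple and occurs precisely under a self-duality type condition on $\pi$ and $\tau$; in particular the numerator contributes no pole in $\operatorname{Re}(s_0)>1$. Combined with the fact that $L^{S}(2s,\tau,\wedge^2\otimes\omega^{-1})$ and $L^{S}(2s,\tau,\sym^2\otimes\omega^{-1})$ are holomorphic and non-vanishing for $\operatorname{Re}(2s)>2$, i.e.\ $\operatorname{Re}(s)>1$, by Lemma~\ref{twsited-L}(a), this already forces $s_0=1$: for $\operatorname{Re}(s_0)>1$ the whole expression is holomorphic, so no pole of the Eisenstein series is possible there.

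It remains to treat $s_0=1$ itself and to see that the relevant twisted symmetric or exterior square $L$-function has a \emph{simple} pole there. At $s=1$ the factor in the denominator is $L^{S}(2,\tau,\wedge^2\otimes\omega^{-1})$ or $L^{S}(2,\tau,\sym^2\otimes\omega^{-1})$, which is finite and nonzero by Lemma~\ref{twsited-L}(a); hence a pole of $I(\phi,f_{\tau'_s})$ at $s=1$ is equivalent to a pole of the numerator $L^{S}(s,\pi\times\tau)$ at $s=1$, but that is the wrong $L$-function. The resolution is that a pole of the Eisenstein series $E(g,f_{\tau'_s})$ at $s=1$ is, by the general theory of Eisenstein series attached to the Siegel parabolic (Langlands' theory, specialized as in the $\SO$ case treated by Gelbart--Piatetski-Shapiro--Rallis--Soudry), controlled by the constant term, whose numerator of the relevant intertwining operator is exactly $L^{S}(2s-1,\tau,\wedge^2\otimes\omega^{-1})$ (odd case) or $L^{S}(2s-1,\tau,\sym^2\otimes\omega^{-1})$ (even case); at $s=1$ the argument $2s-1=1$, so a pole of $E$ at $s=1$ means precisely that this twisted square $L$-function has a pole at $s=1$. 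Its simplicity follows because $L^{S}(s,\tau,\wedge^2\otimes\omega^{-1})$ (resp.\ $\sym^2$) has at most a simple pole at $s=1$ — this is the twisted analogue of the Jacquet--Shalika result, available here via Proposition~\ref{twisted-prop} together with the factorization $L^{S}(s,\tau\otimes(\tau\otimes\omega^{-1}))=L^{S}(s,\tau,\wedge^2\otimes\omega^{-1})L^{S}(s,\tau,\sym^2\otimes\omega^{-1})$ used in Lemma~\ref{twsited-L}, where the left side has a simple pole at $s=1$ by \cite[Proposition (3.6)]{js2} while each right-hand factor is holomorphic for $\operatorname{Re}(s)>1$, so neither can have a pole of order exceeding the order of the product.

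The main obstacle I anticipate is making precise the relation between the pole of the Eisenstein series and the pole of the twisted square $L$-function in the denominator: one must carry out the constant-term computation for the Siegel Eisenstein series on $\GSpin(2m+1)$ or $\GSpin(2m)$, identify the normalizing factor of the relevant rank-one intertwining operator with $L^{S}(2s-1,\tau,\wedge^2\otimes\omega^{-1})$ or $L^{S}(2s-1,\tau,\sym^2\otimes\omega^{-1})$ (the shift $s\mapsto 2s-1$ coming from the normalization $\tau'_s=\tau|\det|^{s-1/2}\otimes\omega$ in \eqref{tauprime}), and rule out that the pole originates instead from the local intertwining operators or from the $L$-function $L^{S}(2s,\tau,\dots)$ appearing in the denominator of $R(s)$. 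This is the general-spin analogue of the Siegel-Eisenstein-series analysis for classical groups in \cite{gps,soudry}, and the computation follows their template closely once the root-datum bookkeeping of Section~\ref{pre-sec} is in hand.
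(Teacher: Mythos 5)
Your proposal eventually lands on the correct key idea — the constant-term computation of the Siegel Eisenstein series — but it is buried under a false start in the first two paragraphs, and that false start is where the genuine trouble lies.

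The paper's proof of Proposition~\ref{EisPole} never touches the zeta integral $I(\phi,f_{\tau'_s})$. It goes directly to the constant term of $E(\cdot,f_{\tau'_s})$ along $\P_m$, writes it explicitly as
\[
f_{\tau'_s}(I) \;+\; \prod_{v\in T} M(f^{(v)}_{\tau'_s})\,
\frac{L^{T}(2s-1,\tau,\rho\otimes\omega^{-1})}{L^{T}(2s,\tau,\rho\otimes\omega^{-1})},
\qquad \rho=\wedge^2\text{ or }\sym^2,
\]
and then argues purely from this expression: local intertwining operators are holomorphic for $\Re(s)\ge 1$; the denominator is nonvanishing for $\Re(2s)\ge 2$ by Kim--Shahidi; so the pole forces a pole of the numerator at $2s_0-1$, and since the numerator is holomorphic for $\Re(2s-1)>1$ (Lemma~\ref{twsited-L}(a)) one concludes $s_0=1$. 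Your third paragraph reproduces this, together with the correct justification of simplicity via the factorization $L^S(s,\tau\times(\tau\otimes\omega^{-1}))=L^S(s,\tau,\wedge^2\otimes\omega^{-1})L^S(s,\tau,\sym^2\otimes\omega^{-1})$ and its simple pole.

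The problems are in the preliminary detour. First, the implication ``pole of $E$ at $s_0$ forces a pole of $I(\phi,f_{\tau'_s})$ for suitable $\phi,f$'' is not automatic: it requires the residue of $E$ to pair nontrivially against some $\phi_N$, which is not established and may fail. The direction actually used in the paper (in Theorem~\ref{rs-hol}, not here) is the reverse: a pole of $I$ forces a pole of $E$, because $\phi_N$ carries no $s$-dependence. Second, and more seriously, you invoke holomorphy of $L^S(s,\pi\times\tau)$ for $\Re(s)>1$ to rule out $\Re(s_0)>1$. In this paper that holomorphy is Theorem~\ref{rs-hol}(a), whose proof \emph{uses} Proposition~\ref{EisPole}; relying on it here is circular. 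The logical order in the paper is: Proposition~\ref{EisPole} $\Rightarrow$ Theorem~\ref{rs-hol} $\Rightarrow$ Theorem~\ref{image}, and you cannot run the first arrow backwards. If you drop paragraphs one and two and carry out the constant-term computation you sketch in paragraph three, you recover the paper's argument; as written, however, the proof's structure depends on a circular step and an unjustified implication.
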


\begin{proof} 
We know from the general theory of Euler products of Langlands and 
the Langlands-Shahidi method that the poles of $E(g, f_{\tau'_{s}})$ 
come from its constant term along $\P_{m}$.  

For a decomposable section $f_{\tau'_{s}}$ the constant term of 
$E(g, f_{\tau'_{s}})$ along $\P_{m}$ has the form 
\begin{equation}
f_{\tau'_{s}}(I) + \prod_{v \in T} M(f^{(v)}_{\tau'_{s}}) 
\frac{L^{T}(2s-1,\tau,\wedge^{2}\otimes \omega^{-1})}
{L^{T}(2s,\tau,\wedge^{2}\otimes \omega^{-1})}
\end{equation} 
in the odd case, and 
\begin{equation} 
f_{\tau'_{s}}(I) + \prod_{v \in T} M(f^{(v)}_{\tau'_{s}}) 
\frac{L^{T}(2s-1,\tau,\sym^{2}\otimes \omega^{-1})}
{L^{T}(2s,\tau,\sym^{2}\otimes \omega^{-1})}
\end{equation} 
in the even case, where $T$ is a finite set of places of $k$ containing $S$.

We should recall that in the constructing the Eisenstein series we used $s-\frac{1}{2}$ 
in (\ref{tauprime}) instead of the usual $s$. This is responsible for the appearance 
of $2s-1$ and $2s$ instead of the usual $2s$ and $2s+1$ in the constant term. 
Furthermore, the terms $2s$ and $2s+1$ appear because we have 
\begin{equation}
\operatorname{Ind}_{\P_{m}(\A)}^{\H(\A)} \left(\tau|\det|^{s}\otimes\omega \right) 
= \operatorname{Ind}_{\P_{m}(\A)}^{\H(\A)} \left(2 s \tilde{\alpha}, \tau\otimes\omega \right),  
\end{equation}
where the right hand side is as in (\ref{induced}) and $\tilde{\alpha}$ on the left hand side is 
the notation from the Langlands-Shahidi method.

The terms $M(f^{(v)}_{\tau'_{s}})$, the local intertwining operators at $I$, 
are holomorphic for $\Re(s) \ge 1$ for all $v$ \cite{shahidi1988-annals,shahidi1990-annals}. 
Therefore, if $E(g, f_{\tau,s})$ has a pole at $s=s_{0}$, then 
\begin{equation} \label{odd-Eis}
\frac{L^T(2s-1, \tau, \wedge^2 \otimes \omega^{-1})} 
{L^T(2s, \tau, \wedge^2 \otimes \omega^{-1})}
\end{equation} 
has a pole at $s=s_0$ in the odd case, or 
\begin{equation} \label{even-Eis}
\frac{L^T(2s-1, \tau, \sym^2 \otimes \omega^{-1})} 
{L^T(2s, \tau, \sym^2 \otimes \omega^{-1})}
\end{equation} 
has a pole at $s=s_0$ in the even case.

Now assume that $E(\cdot, f_{\tau,s})$ does have a pole at $s=s_0$ with $\Re(s_0) \ge 1$. Then 
$\Re(2 s_0) \ge 2$ and by \cite[Prop. 7.3]{kim-shahidi-jrms} the denominator in both 
(\ref{odd-Eis}) and (\ref{even-Eis}) is non-vanishing for $\Re(s) \ge 1$.  
Therefore, the numerator has a pole at $s=s_0$. 
Because $\Re(2 s_{0} -1) \ge 1$ Lemma \ref{twsited-L} implies that $s_{0}=1$ 
and the proof is complete. 
\end{proof}

\begin{thm} \label{rs-hol}
Let $\pi$ be an irreducible, unitary, cuspidal, globally generic representation of $\G(n,\A)$.  
Let $\tau$ be an irreducible, unitary, cuspidal representation of $\GL(m,\A)$ with 
$2 \le m \le n$. 
Assume that $S$ is a sufficiently large finite set of places including all the archimedean 
places of $k$. 

\begin{itemize}

\item[(a)] The $L$-function $L^S(s,\pi \times \tau)$ is holomorphic for $\Re(s) > 1$. 

\item[(b)] Let $\omega$ be an id\`ele class character. 
Assume that $\tau \cong \w{\tau} \otimes \omega$. 
If $L^S(s,\sigma \times \tau)$ has a pole at $s=1$, then 
$L^S(s, \tau, \wedge^2 \otimes \omega^{-1}) $ has a pole at $s=1$ in the odd case 
and $L^S(s, \tau, \sym^2 \otimes \omega^{-1}) $ has a pole at $s=1$ in the even case. 
Such a pole would be simple. 
\end{itemize}
When $m=1$, the $L$-function $L^S(s,\sigma \times \tau)$ is entire in both cases. 
\end{thm}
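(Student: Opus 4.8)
The plan is to prove Theorem~\ref{rs-hol} by combining the Euler-product expansion of the Rankin--Selberg zeta integral from Theorem~\ref{rs-Lqt} with the analytic properties of the Eisenstein series $E(g,f_{\tau'_s})$ established in Proposition~\ref{EisPole}. The key observation is that the global integral $I(\phi,f_{\tau'_s})$ converges for all $s$ by Theorem~\ref{id}(a) whenever the Eisenstein series is holomorphic, so any pole of the right-hand side of \eqref{Lqt} in the region $\Re(s)>1$ must be cancelled by a pole of the denominator $L^S(2s,\tau,\wedge^2\otimes\omega^{-1})$ (resp.\ $L^S(2s,\tau,\sym^2\otimes\omega^{-1})$) or by a pole of the auxiliary factor $R(s)$.

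\textbf{Proof of part (a).} First I would fix $s_0$ with $\Re(s_0)>1$ and show $L^S(s,\pi\times\tau)$ is holomorphic at $s_0$. By Theorem~\ref{rs-Lqt}, choosing the section $f$ so that $R(s)$ is holomorphic and non-vanishing near $s_0$, we have
\[ L^S(s,\pi\times\tau) = \frac{I(\phi,f_{\tau'_s})\cdot L^S(2s,\tau,\wedge^2\otimes\omega^{-1})}{R(s)} \]
in the odd case, and similarly with $\sym^2$ in the even case. Since $\Re(s_0)>1$, we have $\Re(2s_0)>2>1$, so by Lemma~\ref{twsited-L}(a) the numerator factor $L^S(2s,\tau,\wedge^2\otimes\omega^{-1})$ (resp.\ $\sym^2$) is holomorphic at $s_0$. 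It remains to see that $I(\phi,f_{\tau'_s})$ is holomorphic near $s_0$: by Theorem~\ref{id}(a) the integral converges (hence is holomorphic) at every point where the Eisenstein series $E(g,f_{\tau'_s})$ itself is holomorphic, and by Proposition~\ref{EisPole} the only possible pole of $E(g,f_{\tau'_s})$ with $\Re(s)\ge 1$ is at $s=1$, which is excluded since $\Re(s_0)>1$. Varying $\phi$ and $f$ over the data gives holomorphy of the partial $L$-function itself at $s_0$, proving (a). The case $m=1$ is similar but now the $\GL(1)$ twisted symmetric/exterior square $L$-functions degenerate, and one gets holomorphy everywhere; I would quote the standard fact that $E(g,f_{\tau'_s})$ for a $\GL(1)$-cuspidal datum (a Hecke character) twisted appropriately is entire, so $I(\phi,f_{\tau'_s})$ is entire, forcing $L^S(s,\sigma\times\tau)$ entire as well.

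\textbf{Proof of part (b).} Assume $\tau\cong\w\tau\otimes\omega$ and that $L^S(s,\sigma\times\tau)$ has a pole at $s=1$. (Here $\sigma$ should be read as the relevant cuspidal constituent data appearing in \eqref{Sigma}, or more precisely $\pi$ itself in the integral; I would align notation with Theorem~\ref{rs-Lqt}.) Evaluating \eqref{Lqt} near $s=1$ and again choosing $f$ so that $R(s)$ is holomorphic and non-zero there, the pole of $L^S(s,\pi\times\tau)$ must be reflected in the left-hand side $I(\phi,f_{\tau'_s})$, since $L^S(2s,\tau,\wedge^2\otimes\omega^{-1})$ (resp.\ $\sym^2$) is holomorphic and non-vanishing at $s=1$ by Lemma~\ref{twsited-L}(a) together with the non-vanishing on $\Re(s)=1$ from \cite[Theorem 1.1]{shahidi-pac}. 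A pole of $I(\phi,f_{\tau'_s})$ at $s=1$ can only come from a pole of the Eisenstein series $E(g,f_{\tau'_s})$ at $s=1$, by Theorem~\ref{id}(b)--(c) and the fact that the only input with a potential pole is the Eisenstein series. Now Proposition~\ref{EisPole} says precisely that a pole of $E(g,f_{\tau'_s})$ at $s=1$ forces $L^S(s,\tau,\wedge^2\otimes\omega^{-1})$ (odd case) or $L^S(s,\tau,\sym^2\otimes\omega^{-1})$ (even case) to have a simple pole at $s=1$, which is the desired conclusion, and the simplicity is inherited.

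\textbf{Main obstacle.} The delicate point is the bookkeeping in part (b): one must ensure that the pole of $L^S(s,\pi\times\tau)$ genuinely propagates to a pole of $I(\phi,f_{\tau'_s})$ rather than being an artifact that could be absorbed into $R(s)$ or cancelled against a zero. This requires invoking the statement in Theorem~\ref{rs-Lqt} that $R(s)$ can be made holomorphic \emph{and non-vanishing} near any prescribed point by a suitable choice of local data at the ramified places --- a point that goes back to the analysis in \cite{soudry-mem} --- and then observing that varying $\phi$ and $f_{\tau'_s}$ over the whole space cannot simultaneously kill the order of the pole. I would also need to double-check that the constant-term formula used in Proposition~\ref{EisPole} correctly accounts for the denominator $L^T(2s,\cdot)$ being non-zero at $s=1$ (via \cite[Prop. 7.3]{kim-shahidi-jrms}), so that the pole of the constant term is carried by the numerator $L^T(2s-1,\cdot)$, matching the claimed pole at $s=1$ of $L^S(s,\tau,\wedge^2\otimes\omega^{-1})$ after the shift $2s-1=1$.
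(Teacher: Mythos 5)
Your proposal is correct and follows essentially the same route as the paper: express $L^S(s,\pi\times\tau)$ via the basic identity \eqref{Lqt}, choose a section making $R(s)$ holomorphic and non-vanishing near the point of interest, use the non-vanishing of $L^S(2s,\tau,\wedge^2\otimes\omega^{-1})$ and $L^S(2s,\tau,\sym^2\otimes\omega^{-1})$ for $\Re(2s)\ge 2$ to force any pole of $L^S(s,\pi\times\tau)$ with $\Re(s)\ge 1$ onto the Eisenstein series, and then invoke Proposition~\ref{EisPole} to pin it to $s=1$ with the desired consequence. The paper handles parts (a) and (b) in a single stroke by assuming a pole at an arbitrary $s_0$ with $\Re(s_0)\ge 1$ (citing [Prop.\ 7.3 of Kim--Shahidi] where you use Lemma~\ref{twsited-L}(a), both of which suffice), but this is only a difference of presentation, not of substance.
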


\begin{proof} 
Assume that $L^S(s,\sigma \times \tau)$ has a pole at $s=s_0$ with $\Re(s_0) \ge 1$. 
By \cite[Prop. 7.3]{kim-shahidi-jrms} we know that both
$L^S(s,\tau, \wedge^2 \otimes \omega^{-1})$ and 
$L^S(s,\tau, \sym^2 \otimes \omega^{-1})$ are  
holomorphic and non-vanishing at $s=2 s_0$. Hence, 
the right hand side of (\ref{Lqt}) has a pole at $s=s_0$. 
Theorem \ref{rs-Lqt} then implies that $I(\phi,f_{\tau'_{s}})$ has a pole at $s=s_{0}$. 
Here $\tau'_{s}$ is defined in terms of $\tau$ and $\omega$ as in (\ref{tauprime}). 

Consequently, the Eisenstein series $E(g,f_{\tau'_{s}})$ must have a pole at $s=s_{0}$. 
Proposition \ref{EisPole} now implies that $s_{0}=1$ and 
$L^S(s, \tau, \wedge^2 \otimes \omega^{-1}) $, in the odd case, or  
$L^S(s, \tau, \sym^2 \otimes \omega^{-1})$, in the even case, 
has a simple pole at $s=1$. This proves (a) and (b). 

Finally, if $m=1$, then the left hand side of (\ref{Lqt}) is entire, which implies that 
$L^S(s,\sigma \times \tau)$ is entire, too. This completes the proof. 
\end{proof}

\begin{thm} \label{image}

Let $\pi$ be an irreducible, automorphic, unitary, cuspidal, globally generic representation of 
$\G(n,\A)$ with central character $\omega=\omega_\pi$ 
and let $\Pi$ be a transfer of $\pi$ to $\GL(2n,\A)$. Assume that $\Pi$ is a subquotient of 
$\Sigma$ as in (\ref{Sigma}) with $n_1 + n_2 + \cdots + n_t = 2n$. 
\begin{itemize}
\item[(a)] We have $r_1 = r_2 = \cdots = r_t = 0.$   
\item[(b)] The representations $\sigma_i$ are pairwise inequivalent, 
$n_i \ge 2$, 
and $\sigma_i \cong \w{\sigma}_i \otimes \omega$ for all $i$. Moreover, 
for $S$ a sufficiently large finite set of places including all the archimedean ones, we have that 
$L^S(s, \sigma_i, \wedge^2 \otimes \omega^{-1}) $ has a pole at $s=1$ in the odd case, and 
$L^S(s, \sigma_i, \sym^2 \otimes \omega^{-1}) $ has a pole at $s=1$ in the even case. 
\end{itemize}
\end{thm}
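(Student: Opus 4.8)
The plan is to combine the Rankin–Selberg machinery of Section \ref{grs} with the classification results for $\GL$ to pin down the structure of $\Pi$ as an isobaric sum. The starting point is the near-equivalence $\Pi \sim \w\Pi \otimes \omega$ established in \cite[Theorem 1.1]{duke} and Theorem \ref{q-split-weak}, together with the description of $\Pi$ as a subquotient of $\Sigma = \operatorname{Ind}\left(|\det|^{r_1}\sigma_1 \otimes \cdots \otimes |\det|^{r_t}\sigma_t\right)$ in \eqref{Sigma}.

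First I would prove part (a). Reorder the $\sigma_i$ so that $r_1 \ge r_2 \ge \cdots \ge r_t$, and suppose for contradiction that $r_1 > 0$. Applying the near-equivalence and the uniqueness of the data in \eqref{Sigma} (Jacquet–Shalika \cite{js1,js2}), the multiset $\{(\sigma_i, r_i)\}$ must be stable under $(\sigma, r) \mapsto (\w\sigma \otimes \omega, -r)$; in particular $r_1 = r$ forces $\w\sigma_1 \otimes \omega$ to appear with exponent $-r_1 < 0$, so $\sigma_1 \not\cong \w\sigma_1 \otimes \omega$ is allowed but the key is to extract analytic information. The standard argument (as in \cite[\S 1]{soudry}) is this: the partial Rankin–Selberg $L$-function $L^S(s, \pi \times \sigma_1)$ equals, up to the unramified factors and by the Euler product expansion \eqref{euler}, $L^S(s, \Pi \times \sigma_1)$, which by the constituent structure of $\Pi$ has a factor $L^S(s + r_1, \sigma_1 \times \sigma_1)$ — more precisely, pairing $\sigma_1$ against $\w\sigma_1$ inside $\Pi$ produces a pole. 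Since $\sigma_1 \times \w\sigma_1$ has a pole at $s=1$, one gets a pole of $L^S(s, \pi \times \sigma_1)$ at a point with real part $1 + r_1 > 1$. But by Theorem \ref{rs-hol}(a) (applicable since $n_1 = m \le n$ for the largest block — here one needs $m \le n$, which holds because $n_1 \le 2n$ and a pigeonhole/parity argument forces the relevant block to have size $\le n$; when $m=1$ use the entirety statement), $L^S(s, \pi \times \tau)$ is holomorphic for $\Re(s) > 1$, a contradiction. Hence $r_1 = 0$, and since $r_1$ was the largest, all $r_i = 0$.

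Next, for part (b): with all $r_i = 0$, $\Pi = \sigma_1 \boxplus \cdots \boxplus \sigma_t$ is isobaric, and by the same $L$-function argument $L^S(s, \pi \times \sigma_i)$ has a pole at $s=1$ for each $i$ (this pole is what is forced by $\sigma_i$ occurring in $\Pi$ and $\Pi \sim \w\Pi \otimes \omega$, which makes $\w\sigma_i \otimes \omega$ also occur — matching $\sigma_i$ against it gives the pole). Applying Theorem \ref{rs-hol}(b) with $\tau = \sigma_i$ (which requires first knowing $\sigma_i \cong \w\sigma_i \otimes \omega$) then yields that $L^S(s, \sigma_i, \wedge^2 \otimes \omega^{-1})$ has a simple pole at $s=1$ in the odd case, resp. $L^S(s, \sigma_i, \sym^2 \otimes \omega^{-1})$ in the even case. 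To get $\sigma_i \cong \w\sigma_i \otimes \omega$: the isobaric decomposition of $\Pi$ is unique, and near-equivalence plus strong multiplicity one for isobaric representations (Jacquet–Shalika) upgrades $\Pi \sim \w\Pi \otimes \omega$ to $\Pi \cong \w\Pi \otimes \omega = \w\sigma_1 \otimes \omega \boxplus \cdots \boxplus \w\sigma_t \otimes \omega$, so the map $\sigma_i \mapsto \w\sigma_i \otimes \omega$ permutes the $\sigma_i$. If this permutation had a $2$-cycle, say $\sigma_1 \cong \w\sigma_2 \otimes \omega$ with $\sigma_1 \not\cong \sigma_2$, then $L^S(s, \pi \times \sigma_1)$ would have a pole at $s=1$ coming from pairing $\sigma_1$ with $\w\sigma_1 \otimes \omega = \sigma_2$ (appearing in $\Pi$), and Theorem \ref{rs-hol}(b) would force a pole of a twisted symmetric or exterior square $L$-function of $\sigma_1$ at $s=1$, hence $\sigma_1 \cong \w\sigma_1 \otimes \omega$ by Lemma \ref{twsited-L}(b) — but then $\sigma_2 \cong \sigma_1$, a contradiction. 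So the permutation is trivial and $\sigma_i \cong \w\sigma_i \otimes \omega$ for all $i$. Pairwise inequivalence follows because a repeated $\sigma_i$ would make $\Pi$ non-tempered-looking in a way incompatible with the pole structure — more directly, if $\sigma_i \cong \sigma_j$ for $i \ne j$ then $L^S(s, \pi \times \sigma_i)$ would have a pole of order $\ge 2$ at $s=1$, contradicting the simplicity asserted in Theorem \ref{rs-hol}(b). Finally $n_i \ge 2$: a $\GL(1)$ block $\sigma_i = \chi$ would give a pole of $L^S(s, \pi \times \chi)$ at $s=1$, but the last sentence of Theorem \ref{rs-hol} says this $L$-function is entire when $m=1$.

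\textbf{Main obstacle.} The delicate point is the transition from near-equivalence to the actual Rankin–Selberg pole statements at the specific blocks $\sigma_i$, and in particular ensuring the hypothesis $m \le n$ of Theorem \ref{rs-hol} is met for whichever block we need to test. Since a block could a priori have size up to $2n$, the argument has to be bootstrapped: first handle blocks of size $\le n$ to rule out positive exponents among them, then use the symmetry $r \mapsto -r$ together with $\sum n_i = 2n$ to control the possible large block (there can be at most one block of size $> n$, and the exponent symmetry forces its exponent to be $0$). Getting this counting/parity bookkeeping exactly right, and correctly invoking the $m=1$ entirety case to exclude $\GL(1)$ blocks, is where the care is needed; the rest is a now-standard application of the Jacquet–Shalika classification and the holomorphy/pole dichotomy packaged in Theorems \ref{rs-Lqt}, \ref{rs-hol} and Lemma \ref{twsited-L}.
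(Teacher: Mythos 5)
Your overall strategy matches the paper's: get the self-dual multiset structure on the inducing data of $\Sigma$, then test $\pi$ against the constituent blocks via the Rankin--Selberg $L$-functions and invoke the holomorphy/pole dichotomy of Theorem~\ref{rs-hol}. However, there is a recurring $\omega$-twist error in your pole computations that invalidates the specific tests you propose. The self-duality coming from $\Pi \sim \w\Pi\otimes\omega$ pairs $(\sigma_1,r_1)$ with $(\w\sigma_1\otimes\omega,\,-r_1)$, not with $(\w\sigma_1,-r_1)$. So in part~(a), when you write that $L^S(s,\Pi\times\sigma_1)$ acquires a pole from ``pairing $\sigma_1$ against $\w\sigma_1$ inside $\Pi$,'' that pairing simply isn't present: for $L^S(s+r_j,\sigma_j\times\sigma_1)$ to have a pole at $\Re(s)>1$ you need $\sigma_j\cong\w\sigma_1$ \emph{and} $r_j<0$, but the partner sitting at $-r_1$ is $\w\sigma_1\otimes\omega$, which is $\not\cong\w\sigma_1$ whenever $\omega\neq 1$. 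The same slip recurs in part~(b) (``matching $\sigma_i$ against $\w\sigma_i\otimes\omega$ gives the pole'' --- a Rankin--Selberg pole at $s=1$ requires $\sigma_j\cong\w\tau$, not $\sigma_j\cong\w\tau\otimes\omega$) and in your $2$-cycle argument. The correct test, as in the paper, is against $\w\delta_1$ where $\delta_1:=\w\sigma_1\otimes\omega$ is the block sitting at the most negative exponent $z_1=-r_1$; then the factor $L^S(s+z_1,\delta_1\times\w\delta_1)$ genuinely has a pole at $s=1-z_1=1+r_1>1$.

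A second point: the paper does not re-derive the self-dual multiset structure from near-equivalence and Jacquet--Shalika but cites \cite[Prop.~7.4]{duke}, which hands you the decomposition in the symmetric form $(a_1,\dots,a_q,b_1,\dots,b_\ell,a_q,\dots,a_1)$ with $\delta_i|\det|^{z_i}$ paired against $(\w\delta_i\otimes\omega^{-1})|\det|^{-z_i}$ and $\sigma_j\cong\w\sigma_j\otimes\omega$ already built in. This packaging is what makes the $m\le n$ bound automatic: from $2(a_1+\cdots+a_q)+(b_1+\cdots+b_\ell)=2n$ one reads off $a_1\le n$, so no separate ``pigeonhole/parity'' argument is needed for part~(a). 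Your observation that any block of size $>n$ must be self-dual with exponent $0$ is correct and is an acceptable substitute, but you should either cite \cite[Prop.~7.4]{duke} or carry out the near-equivalence argument with the twists tracked properly. Once the $\omega$-twists are fixed, your plan does become the paper's proof; the entirety statement for $m=1$ excluding $\GL(1)$ blocks and the simplicity of the pole in Theorem~\ref{rs-hol}(b) forcing pairwise inequivalence are both handled as you describe.
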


\begin{proof}
By \cite[Prop. 7.4]{duke} we know that $\Sigma$ is induced from a representation 
of a Levi subgroup of $\GL(2n)$ of type $(a_1,\dots,a_q,b_{1}\cdots,b_{\ell},a_{q},\cdots,a_{1})$ 
which can be written as 
\begin{eqnarray*} 
&\delta_1 |\det(\cdot)|^{z_1} \otimes \cdots \otimes \delta_q |\det(\cdot)|^{z_q} 
\otimes 
\\
&\sigma_1 \otimes \sigma_2 \otimes \cdots \otimes \sigma_\ell 
\\ 
&\otimes (\w{\delta}_q \otimes \omega^{-1}) |\det(\cdot)|^{- z_q} \otimes \cdots 
(\w{\delta}_1 \otimes \omega^{-1}) |\det(\cdot)|^{- z_1}, 
\end{eqnarray*}
where $\delta_i$ and $\sigma_i$ are irreducible, unitary, cuspidal presentations, 
$\sigma_i \cong \w{\sigma}_i \otimes \omega$, and 
\begin{equation} \label{ab} 
2 (a_{1}+\cdots+a_{q}) + (b_{1}+\cdots+b_{\ell}) = 2n. 
\end{equation}

Assume that $q>0$. Rearranging if necessary we may assume  
$\Re(z_1) \le \cdots \le \Re(z_q) < 0$. Now for $S$ a sufficiently large finite set of places we have 
\begin{eqnarray} \label{Lprod}
L^S(s, \pi \times \w{\delta}_1) &=& L^S(s, \Pi \times \w{\delta}_1) \\ \nonumber
&=& \prod_{i=1}^q L^S(s+z_i, \delta_i \times \w{\delta}_1) 
L^S(s-z_i, \w{\delta}_i \times \w{\delta}_1 \otimes \omega^{-1}) \\ \nonumber 
&& \cdot \prod_{i=1}^\ell L^S(s, \sigma_i \times \w{\delta}_1). 
\end{eqnarray}
The first term on the right hand side has a pole at $s=1-z_1$ which can not be canceled by the other terms 
because $\Re(1-z_1 \pm z_i) \ge 1$ and $\Re(1- z_1) \ge 1$. Therefore, the left hand side has a pole 
at $s=1-z_1$. 

On the other hand, by (\ref{ab}) we know that $a_{1} \le n$. We can 
apply Theorem \ref{rs-hol}(a) to conclude that $\Re(z_1) \ge 0.$ 
This is a contradiction proving that $q=0$, i.e., there are no $\delta_i$'s. 

So far we have proved that $\Sigma$ is induced from a representation of the form 
$\sigma_1 \otimes \sigma_2 \otimes \cdots \otimes \sigma_\ell$ satisfying 
$\sigma_i \cong \w{\sigma}_i \otimes \omega$. 
Fix $1 \le j \le \ell$ and consider 
\begin{eqnarray}\label{SigProd}
L^S(s, \pi \times \w{\sigma}_j) = L^S(s, \Pi \times \w{\sigma}_j) 
= \prod_{i=1}^\ell L^S(s, \sigma_i \times \w{\sigma}_j). 
\end{eqnarray}

The right hand side has a pole and hence, so does the left hand side. Moreover, 
the $\sigma_i$'s are pairwise inequivalent because otherwise the left hand 
side of (\ref{SigProd}) would have a pole of higher order. 
Since $\sigma_i \cong \w{\sigma}_i \otimes \omega$ we can apply Theorem \ref{rs-hol}(b). 
We conclude that $L^S(s, \sigma_i, \wedge^2 \otimes \omega^{-1}) $ has a pole at $s=1$ 
in the odd case, and $L^S(s, \sigma_i, \sym^2 \otimes \omega^{-1}) $ has a pole at $s=1$ 
in the even case. 

Finally, Theorem \ref{rs-hol}(c) shows that no $\sigma_j$ is a character.  This completes the proof. 
\end{proof}

\begin{cor}
The representation $\Sigma$ is irreducible and 
$\Pi=\Sigma = \sigma_1 \boxplus \cdots \boxplus \sigma_t$ is an isobaric sum of the $\sigma_i$. 
In particular, the transfer $\Pi$ of $\pi$ is unique and $\Pi \cong \w{\Pi} \otimes \omega$ (not just 
nearly equivalent as in \cite[Theorem 1.1]{duke}). 
\end{cor}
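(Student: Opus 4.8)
The plan is to obtain the corollary from Theorem \ref{image} together with the standard structure theory of isobaric representations of general linear groups. By Theorem \ref{image}, $\Pi$ is a constituent of $\Sigma = \mathrm{Ind}\left(\sigma_1 \otimes \cdots \otimes \sigma_t\right)$, all the exponents $r_i$ in (\ref{Sigma}) being zero, where the $\sigma_i$ are irreducible, unitary, cuspidal, automorphic representations of $\GL(n_i,\A)$ which are pairwise inequivalent, satisfy $n_i \ge 2$, and satisfy $\sigma_i \cong \w{\sigma}_i \otimes \omega$. The first step is to show that $\Sigma$ is in fact irreducible, so that $\Pi = \Sigma$.

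For the irreducibility I would appeal to the theory of Eisenstein series and standard intertwining operators on $\GL(2n,\A)$: since the inducing datum $\sigma_1 \otimes \cdots \otimes \sigma_t$ is built from pairwise inequivalent unitary cuspidal representations with all exponents equal, the global intertwining operators realizing the permutations of the blocks are holomorphic and invertible at the relevant point. This is because their normalizing factors are products of Rankin--Selberg $L$-functions $L^S(s,\sigma_i \times \w{\sigma}_j)$ with $i \ne j$, which are holomorphic and non-vanishing at $s=1$ by Jacquet--Shalika, and because no reducibility from ``linked'' data can occur when the inducing representations are unitary with equal exponents. Hence $\Sigma$ is irreducible, and by the very definition of the isobaric sum (Langlands) this gives $\Pi = \Sigma = \sigma_1 \boxplus \cdots \boxplus \sigma_t$; in particular $t$ agrees with the integer $\ell$ appearing in the proof of Theorem \ref{image}.

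It remains to establish uniqueness and the self-duality $\Pi \cong \w{\Pi}\otimes\omega$. For uniqueness, any other transfer $\Pi'$ of $\pi$ has, by construction, $\Pi'_v$ parametrized by $\iota_v \circ \phi_v$ for almost all $v$, hence $\Pi'_v \cong \Pi_v$ for almost all $v$; since both $\Pi$ and $\Pi'$ are now known to be isobaric, the strong multiplicity one theorem for isobaric representations of $\GL(2n,\A)$ (Jacquet--Shalika, \cite{js1,js2}) forces $\Pi' \cong \Pi$. For the self-duality I would compute directly, using that the contragredient and the twist by $\omega$ both distribute over $\boxplus$,
\[ \w{\Pi}\otimes\omega = (\w{\sigma}_1 \otimes \omega) \boxplus \cdots \boxplus (\w{\sigma}_t \otimes \omega) = \sigma_1 \boxplus \cdots \boxplus \sigma_t = \Pi, \]
the middle equality being the relations $\sigma_i \cong \w{\sigma}_i\otimes\omega$ supplied by Theorem \ref{image}; alternatively, one combines the near-equivalence $\Pi \sim \w{\Pi}\otimes\omega$ already proved (Theorem \ref{q-split-weak} and \cite[Theorem 1.1]{duke}) with the same strong multiplicity one statement.

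The only point genuinely requiring care is the irreducibility of $\Sigma$: one must verify that no intertwining operator among the permuted inducing data acquires a pole or a zero at the point in question, which is exactly what the pairwise inequivalence of the unitary cuspidal representations $\sigma_i$ (with all exponents zero) guarantees through the analytic behavior of $L^S(s,\sigma_i\times\w{\sigma}_j)$ at $s=1$. Granting that, the rest is a formal consequence of the Langlands classification and strong multiplicity one for isobaric representations.
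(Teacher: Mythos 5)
Your proof is correct and follows essentially the same route as the paper, simply making explicit the standard facts — irreducibility of parabolic induction of unitary cuspidals on $\GL$ with equal (zero) exponents, strong multiplicity one for isobaric representations, and distributivity of contragredient/twist over $\boxplus$ — that the paper's one-line proof invokes implicitly via "immediately follows." The only minor caveat is that the holomorphy and invertibility of the intertwining operators you mention is more relevant to the commutativity of the isobaric sum than to irreducibility itself; the decisive point, which you also state, is the absence of linked data (equivalently, Bernstein's theorem on unitary parabolic induction for $\GL_n$, or the Jacquet--Shalika classification) given that all exponents vanish.
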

\begin{proof}
The corollary immediately follows from the fact that $r_1=\cdots=r_t = 0$ and that 
$\sigma_i \cong \w{\sigma}_i \otimes \omega$. 
\end{proof}

\subsection{Description of the Image of Transfer}

We continue to denote by $\pi$ an irreducible, globally generic, unitary, cuspidal automorphic representation 
of $\G(n,\A)$. We proved that $\pi$ has a unique transfer $\Pi$, an irreducible, generic, automorphic 
representation of $\GL(2n,\A)$. Moreover, we have shown that $\omega_\Pi = \omega^n\mu$ and 
$\Pi \cong \w{\Pi}\otimes\omega$, where 
$\omega=\omega_\pi$ denotes the central character of $\pi$ and $\omega_\Pi$ denotes that of $\Pi$. 

Furthermore, Theorem \ref{image} gives an ``upper bound'' for the image of transfer from $\G(n)$ groups 
to $\GL(2n)$. Combining this with the ``lower bound'' provided by Hundley and Sayag in 
\cite{hs-announce,hs-ppt-odd,hs-ppt-even} 
gives the full description of the image of this transfer. We summarize all these results as follows. 

\begin{thm}\label{main} 
Let $k$ be a number field and let $\A=\A_{k}$ be the ring of ad\`eles of $k$. 
Denote by $\G(n)$ the split groups $\GSpin(2n+1)$, $\GSpin(2n)$, or any of the quasi-split 
non-split groups $\GSpin^{*}(2n)$. Let $\pi$ be a globally generic, irreducible, cuspidal, automorphic 
representation of $\G(n,\A)$ with central character $\omega=\omega_{\pi}$. 
Then $\pi$ has a unique functorial transfer to an 
automorphic representation $\Pi$ of $\GL(2n,\A)$ associated with the $L$-homomorphism 
$\iota$ described \cite{duke} (split case) and Section \ref{q-split-sec} (quasi-split non-split). 
The transfer $\Pi$ satisfies 
\[ \Pi \cong \w{\Pi}\otimes\omega. \] 
Moreover, 
\[ \omega_{\Pi} = \omega_{\pi}^{n} \mu, \] 
where $\mu$ is a quadratic id\`ele class character which is trivial in the split case and nontrivial 
in the quasi-split non-split case. (The triviality or nontriviality of $\mu = \omega_{\Pi} \omega^{-n}$ 
can tell apart the split and quasi-split non-split cases.)  

The automorphic representation $\Pi$ is an isobaric sum of the form 
\[ \Pi = \operatorname{Ind}\left(\Pi_{1} \otimes \cdots \otimes \Pi_{t} \right) 
= \Pi_{1} \boxplus \cdots \boxplus \Pi_{t},\]
where each $\Pi_{i}$ is a unitary, cuspidal representation of $\GL(n_{i},\A)$ such that for 
$T$ sufficiently large finite set of places of $k$ containing the archimedean places, the partial 
$L$-function   
$L^{T}(s,\Pi_{i},\wedge^{2}\otimes\omega)$ has a pole at $s=1$ in the odd case and 
$L^{T}(s,\Pi_{i},\sym^{2}\otimes\omega)$ has a pole at $s=1$ in the even case (both split 
and quasi-split non-split cases).  We have $\Pi_{i} \not\cong \Pi_{j}$ if $i \not= j$ and 
$n_{1}+\cdots+n_{t}=2n$ with each $n_{i}> 1$. 

Furthermore, any such representation $\Pi$ is a functorial transfer of some globally generic 
$\pi$. 
\end{thm}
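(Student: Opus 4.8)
The plan is to assemble the theorem from the results of the preceding sections together with the descent theorems of Hundley and Sayag, in four steps.

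\emph{Existence of a weak transfer.} For $\G(n)=\GSpin(2n+1)$ or the split $\GSpin(2n)$, a globally generic cuspidal $\pi$ admits a transfer $\Pi$ to $\GL(2n,\A)$ by \cite[Theorem 1.1]{duke}; for the quasi-split non-split $\GSpin^{*}(2n)$ this is Theorem \ref{q-split-weak}. In all cases one obtains, outside a finite set of places, that $\Pi$ is parametrized by $\iota\circ\phi_{v}$, that $\omega_{\Pi}=\omega_{\pi}^{n}\mu$ with $\mu$ quadratic (trivial in the split even case and in the odd case, nontrivial in the non-split case), and that $\Pi$ is nearly equivalent to $\w{\Pi}\otimes\omega$. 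By the Jacquet--Shalika classification \cite{js1,js2}, $\Pi$ is a constituent of an isobaric representation $\Sigma=\operatorname{Ind}(|\det|^{r_{1}}\sigma_{1}\otimes\cdots\otimes|\det|^{r_{t}}\sigma_{t})$ with $\sigma_{i}$ unitary cuspidal on $\GL(n_{i},\A)$ and $n_{1}+\cdots+n_{t}=2n$.

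\emph{Rigidity and the isobaric structure.} Here I would invoke Theorem \ref{image} and the corollary following it. The Rankin--Selberg integral of Section \ref{grs}, through Theorem \ref{rs-Lqt}, Proposition \ref{EisPole} and Theorem \ref{rs-hol}, forces $r_{1}=\cdots=r_{t}=0$, each $n_{i}\ge 2$, the $\sigma_{i}$ pairwise inequivalent, $\sigma_{i}\cong\w{\sigma}_{i}\otimes\omega$, and a simple pole at $s=1$ of $L^{S}(s,\sigma_{i},\wedge^{2}\otimes\omega^{-1})$ in the odd case and of $L^{S}(s,\sigma_{i},\sym^{2}\otimes\omega^{-1})$ in the even case. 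Consequently $\Sigma$ is irreducible and $\Pi=\Sigma=\sigma_{1}\boxplus\cdots\boxplus\sigma_{t}$; renaming $\Pi_{i}:=\sigma_{i}$ gives the asserted isobaric decomposition, and the pole of $L^{T}(s,\Pi_{i},\wedge^{2}\otimes\omega)$ (resp.\ $L^{T}(s,\Pi_{i},\sym^{2}\otimes\omega)$) is deduced from the $\omega^{-1}$-twisted version via the self-duality $\Pi_{i}\cong\w{\Pi}_{i}\otimes\omega$. Strong multiplicity one for isobaric representations \cite{js2} then yields uniqueness of the transfer and promotes the near-equivalence $\Pi\cong\w{\Pi}\otimes\omega$ of the first step to a genuine isomorphism. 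For the central character: since $\omega_{\Pi}=\omega_{\pi}^{n}\mu$ at almost all places (Proposition \ref{omega-tilde} and its split analogue in \cite{duke}), it holds everywhere, id\`ele class characters being determined by almost all local data; squaring and using $\Pi\cong\w{\Pi}\otimes\omega$ gives $\omega_{\Pi}^{2}=\omega^{2n}$, so $\mu=\omega_{\Pi}\omega^{-n}$ is quadratic, trivial precisely when $\Pi$ comes from a split even group and otherwise cutting out, by class field theory, the quadratic extension defining the relevant $\GSpin^{*}(2n)$, in accordance with the description of $\iota$ in Section \ref{q-split-sec}.

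\emph{Surjectivity onto the expected image, and the main obstacle.} For the converse --- that any isobaric $\Pi=\Pi_{1}\boxplus\cdots\boxplus\Pi_{t}$ on $\GL(2n,\A)$ with the stated pole and central-character conditions arises as the transfer of a globally generic cuspidal $\pi$ --- I would cite directly the descent construction of Hundley and Sayag \cite{hs-announce,hs-ppt-odd,hs-ppt-even}, extending the Ginzburg--Rallis--Soudry descent from classical groups to general spin groups; one checks that the descent produces a representation on $\GSpin(2n+1)$ when the twisted exterior square has the pole and on an even $\GSpin(2n)$ --- split or non-split according to $\mu$ --- when the twisted symmetric square has the pole, and that applying the transfer of the first two steps to that representation recovers $\Pi$. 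Essentially everything else is bookkeeping once the deep inputs are granted; the genuinely substantive internal step is the rigidity step, powered by the unramified Rankin--Selberg computation (Theorem \ref{unram}) and the holomorphy for $\Re(s)>1$ of the partial twisted symmetric- and exterior-square $L$-functions (Proposition \ref{twisted-prop}, i.e.\ the results of Belt and Takeda). The converse half is not established here and rests wholly on the Hundley--Sayag descent; the one point requiring real care is matching their output group with ours according to which twisted square $L$-function has its pole and according to the character $\mu$.
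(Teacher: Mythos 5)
Your proposal matches the paper's approach: Theorem \ref{main} is a summary statement, assembled exactly as you describe from the weak transfer (Theorem \ref{q-split-weak} and the split case of \cite{duke}), the rigidity/isobaric structure supplied by Theorem \ref{image} and its corollary, the central-character and $\Pi \cong \w{\Pi}\otimes\omega$ statements, and the converse inclusion furnished by the Hundley--Sayag descent \cite{hs-announce,hs-ppt-odd,hs-ppt-even}; the paper itself offers no proof beyond the sentence preceding the statement. One caveat about a step you inserted to reconcile the statement with Theorem \ref{image}: you claim the pole of $L^{T}(s,\Pi_{i},\wedge^{2}\otimes\omega)$ (resp.\ $L^{T}(s,\Pi_{i},\sym^{2}\otimes\omega)$) follows ``from the $\omega^{-1}$-twisted version via the self-duality $\Pi_{i}\cong\w{\Pi}_{i}\otimes\omega$,'' but this does not go through --- the self-duality gives $L(s,\Pi_{i}\times\w{\Pi}_{i}) = L(s,\Pi_{i},\wedge^{2}\otimes\omega^{-1})\,L(s,\Pi_{i},\sym^{2}\otimes\omega^{-1})$, not the $\omega$-twisted product, and $\Pi_{i}\otimes\omega$ is not $\w{\Pi}_{i}$ in general. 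The twist appearing in Theorem \ref{image}, in Lemma \ref{twsited-L}, and in the introduction's equation (\ref{L-pole}) is uniformly $\omega^{-1}$; the bare $\omega$ in the statement of Theorem \ref{main} is evidently a sign slip in the paper, and you should simply carry $\omega^{-1}$ through rather than try to convert it.
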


\section{Applications} 

\subsection{Local Representations at the Ramified Places} 
The local components of the automorphic representation $\Pi=\otimes_{v} \Pi_{v}$ are 
well understood for the archimedean $v$ as well as those non-archimedean $v$ outside 
of the finite set $S$ through our construction of the candidate transfer. However, the 
converse theorem tells us nothing about $\Pi_{v}$ for $v \in S$. Having proved Theorem \ref{main} 
we can now get some information for these places as well. This shows that while we did not 
have control over places $v \in S$, the automorphic representation $\Pi$ does indeed turn 
out to have the right local components in $S$.

\begin{prop}\label{vinS}
Let $S$ be the non-empty finite set of non-archimedean places as in \cite[Thm. 1.1.]{duke} or 
Theorem\ref{q-split-weak}.  Fix $v \in S$ and let 
\[ \pi_{v} \cong \operatorname{Ind}\left(\pi_{1,v}|\det|^{b_{1,v}}\otimes\cdots\otimes
\pi_{r,v}|\det|^{b_{r,v}}\otimes\pi_{0,v} \right) \] 
be an irreducible, generic representation of $\G(n,k_{v})$, where each $\pi_{i,v}$ is a tempered 
representation of $\GL(n_{i},k_{v})$, $b_{1,v} > \cdots > b_{r,v}$ and $\pi_{0,v}$ is a tempered, 
generic representation of some smaller $\G(m,k_{v})$ with $n_{1}+\cdots+n_{r}+m=n$. 
Denote the central character of $\pi_{v}$ by $\omega_{v}$. 

Assume that $\pi_{v}$ is the local component of the globally generic representation $\pi$ of 
$\G(n,\A_{k})$ and let $\Pi$ be its transfer to $\GL(2n,\A_{k})$. Then the local component $\Pi_{v}$ 
of $\Pi$ at $v$ is generic and of the form 
\begin{eqnarray}\label{vinS-rep} 
\Pi_{v} & \cong & \operatorname{Ind} \left( 
(\pi_{1,v}|\det|^{b_{1,v}}\otimes\cdots\otimes\pi_{r,v}|\det|^{b_{r,v}}\otimes\Pi_{0,v}\otimes \right. \\
&& \left. (\w{\pi}_{r,v}\otimes\omega_{v})|\det|^{-b_{r,v}}\otimes\cdots\otimes(\w{\pi}_{1,v}
\otimes\omega_{v})|\det|^{-b_{1,v}}
\right), 
\end{eqnarray}
where $\Pi_{0,v}$ is a tempered representation of $\GL(2m,k_{v})$ if $m>0$. 
\end{prop}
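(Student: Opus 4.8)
The plan is to combine the global description of $\Pi$ from Theorem \ref{main} with the compatibility of local and global $L$-functions, using the Langlands-Shahidi local factors at $v$ to pin down $\Pi_v$. First I would fix $v \in S$ and write the Langlands quotient data for $\pi_v$ as in the statement. Since $\pi$ is globally generic, each $\pi_w$ is generic, and the generic local Langlands classification for $\GL$ together with the known functorial description at the unramified and archimedean places shows what $\Pi_w$ must be for $w \notin S$; the issue is only $v$. The key point is that $\Pi$ is now known (Theorem \ref{image} and its corollary) to be an isobaric sum $\Pi = \Pi_1 \boxplus \cdots \boxplus \Pi_t$ of unitary cuspidal representations with $\Pi_i \cong \w{\Pi}_i \otimes \omega$, so $\Pi_v$ is the local component of a genuine isobaric automorphic representation and hence is itself a (generic) isobaric/induced representation of $\GL(2n,k_v)$ whose Langlands data is an unordered multiset of essentially tempered representations.

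Next I would identify that multiset. For each $w \notin S$ the local factor $L(s,\pi_w \times \tau_w) = L(s,\Pi_w \times \tau_w)$ for all $\tau$, and by Proposition \ref{nice} and the multiplicativity of $\gamma$-factors the global functional equation for $L(s,\pi \times \tau)$ versus $L(s,\Pi \times \tau)$ forces the local $\gamma$-factors at $v$ to agree as well: $\gamma(s,\pi_v \times \tau_v,\psi_v) = \gamma(s,\Pi_v \times \tau_v,\psi_v)$ for all irreducible generic $\tau_v$ of every $\GL(m,k_v)$. By the local converse theorem for $\GL$ (Henniart, Chen, or the Jacquet--Piatetski-Shapiro--Shalika converse theorem in its local form), the collection of twisted $\gamma$-factors against all $\GL(m)$, $m \le 2n-1$, determines a generic irreducible representation of $\GL(2n,k_v)$ uniquely; hence $\Pi_v$ is forced to be whatever generic representation of $\GL(2n,k_v)$ has the same twisted $\gamma$-factors as $\pi_v$. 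It then remains to check that the representation displayed in (\ref{vinS-rep}) — induced from the tempered pieces $\pi_{i,v}|\det|^{b_{i,v}}$, the block $\Pi_{0,v}$ (the local transfer of $\pi_{0,v}$ to $\GL(2m,k_v)$, which exists and is tempered because $\pi_{0,v}$ is tempered and the archimedean/unramified-style computation applies, or because a tempered-packet transfer is available), and the mirror pieces twisted by $\omega_v$ — indeed has the same twisted $\gamma$-factors as $\pi_v$. This is a direct computation: $\gamma(s,\pi_v \times \tau_v,\psi_v)$ factors via the Levi $\GL(n_1)\times\cdots\times\GL(n_r)\times\G(m)$ inside $\G(n)$ into $\prod_i \gamma(s+b_{i,v},\pi_{i,v}\times\tau_v)\gamma(s-b_{i,v},\w\pi_{i,v}\otimes\omega_v\times\tau_v)\cdot\gamma(s,\pi_{0,v}\times\tau_v)$, by the multiplicativity of Langlands-Shahidi $\gamma$-factors (as in \cite{shahidi1990-annals}), and the right side of (\ref{vinS-rep}) gives exactly the same product via multiplicativity of Rankin-Selberg $\gamma$-factors for $\GL$; the middle block matches because $\gamma(s,\Pi_{0,v}\times\tau_v,\psi_v)=\gamma(s,\pi_{0,v}\times\tau_v,\psi_v)$ follows from the tempered case.

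The main obstacle is the tempered base case: showing that $\pi_{0,v}$ has a well-defined local transfer $\Pi_{0,v}$ to $\GL(2m,k_v)$ which is tempered and satisfies $\gamma(s,\Pi_{0,v}\times\tau_v,\psi_v)=\gamma(s,\pi_{0,v}\times\tau_v,\psi_v)$ for all generic $\tau_v$. One route is to invoke the local functoriality that follows from the global results applied to a cuspidal representation with prescribed local component $\pi_{0,v}$ at one place (a Poincar\'e-series / globalization argument, as in Cogdell--Piatetski-Shapiro--Shahidi style arguments), combined with the fact that a generic constituent of a tempered $L$-packet has the expected (tempered, i.e.\ no exponents) transfer; the matching of $\gamma$-factors then again comes from the local converse theorem. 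Once $\Pi_{0,v}$ is in hand, assembling (\ref{vinS-rep}) and checking genericity (the displayed induced representation is irreducible and generic because the exponents $b_{1,v}>\cdots>b_{r,v}>0>-b_{r,v}>\cdots>-b_{1,v}$ are in strictly decreasing order, which is precisely the condition for the standard module to be irreducible and generic, or one takes its Langlands quotient) is routine. I would close by noting the uniqueness: any two generic representations of $\GL(2n,k_v)$ with the same twisted $\gamma$-factors against all $\GL(m)$ coincide, so $\Pi_v$ is exactly (\ref{vinS-rep}).
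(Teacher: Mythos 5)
The overall shape of your argument agrees with the paper's: start from the isobaricity of $\Pi$ (so that $\Pi_v$ is a full induced generic representation), match twisted $\gamma$-factors at $v$, and conclude. Where you diverge is the last step. The paper does \emph{not} invoke a local converse theorem; instead it feeds the putative representation from (\ref{vinS-rep}) back into the \emph{global} converse theorem (with the singleton set $T=\{w\}$ at an auxiliary unramified place $w\neq v$), producing a second global transfer $\Pi'$, and then uses the \emph{uniqueness} of the transfer from Theorem \ref{main} to force $\Pi'_v\cong\Pi_v$. Your route via Henniart's local converse theorem for $\GL(2n)$ (twists by $\GL(m)$, $m\le 2n-1$) is a legitimate alternative and is arguably more direct, since it localizes the comparison entirely; the paper's route has the advantage of using only machinery already set up in the paper and not introducing a new external ingredient.

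There is, however, a real gap in your treatment of the tempered base case $r=0$, and this is precisely the part the paper has to work hardest for. You assert that $\pi_{0,v}$ ``has a well-defined local transfer $\Pi_{0,v}$ ... which is tempered'' by invoking ``the fact that a generic constituent of a tempered $L$-packet has the expected (tempered) transfer'' or that ``a tempered-packet transfer is available.'' Neither of these is an available fact here; the temperedness of the local component of the global transfer when the input is tempered is exactly the statement being proven in this case, not a known input. The paper proves it following the argument of \cite[Lemma~7.1]{ckpss2}: using multiplicativity of $\gamma$-factors and boundedness estimates on the resulting local $L$-functions to rule out nonzero exponents in the Langlands data of $\Pi_{0,v}$. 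Your proposal needs to actually carry out that estimate (or cite it) rather than assume it. Secondly, a smaller point: the passage from matching global functional equations to matching local $\gamma$-factors at the single place $v$ is not immediate when $|S|>1$; you only get equality of the product over $S$. You need to embed each local twist $\rho_v$ in a global cuspidal representation that is unramified at all other places of $S$ (as in \cite[Prop.~7.2]{ckpss2}, which the paper cites) in order to isolate the factor at $v$. This is routine but must be said.
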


\begin{proof}
The argument proceeds the same way as in the proof of \cite[Prop. 2.5.]{compos}, which proved 
the analogous result for the case of $\GSp(4)=\GSpin(5)$. We briefly mention the steps for completeness. 

The fact that $\Pi$ is an isobaric sum of unitary, cuspidal representations of general linear 
groups, Theorem \ref{main}, implies that every local component of $\Pi$ is full induced and generic. 
In particular, so is $\Pi_{v}.$ 

The first step is to show that 
\[ \gamma(s,\pi_{v}\times\rho_{v},\psi_{v}) = \gamma(s,\Pi_{v}\times\rho_{v},\psi_{v}) \] 
for every supercuspidal representation $\rho_{v}$ of $\GL(a,k_{v})$. To do this we ``embed'' 
the local representation $\rho_{v}$ in a unitary, cuspidal representation $\rho$ of $\GL(a,\A)$ 
whose other local components are unramified \cite[Prop. 5.1]{shahidi1990-annals} and apply 
the converse theorem with $S'=S - \{v\}$ just as in \cite[Prop. 7.2]{ckpss2}. Moreover, by 
multiplicativity of the $\gamma$-factors we obtain the equality for $\rho_{v}$ in the 
discrete series, as well. 

Next, assume that $\pi_{v}$ is tempered. We claim that $\Pi_{v}$ is also tempered. 
Here again the main tool is multiplicativity of the $\gamma$-factors and the proof is 
exactly as in \cite[Lemma 7.1]{ckpss2}. This proves the Proposition for $r=0$.  

Now consider the case of $r>0$. Apply the case of $r=0$ to $\pi_{v}=\pi_{0,v}$ and take 
the resulting tempered representation of $\GL(2m,k_{v})$ to be $\Pi_{0,v}$. To show that 
this representation satisfies the requirements of the proposition we use the converse theorem 
again. Let $T=\{w\}$ consist of a single non-archimedean place $w \not= v$ at which $\pi_{v}$ 
is unramified and consider the global representation $\Pi'$ of $\GL(2n,\A)$ whose local 
components are the same as those of $\Pi$ outside of $S$ and are the irreducible, induced 
representations on the right hand side of (\ref{vinS-rep}) when $v \in S$. We can apply the converse 
theorem, Theorem \ref{converse}, to $\Pi'$ and $T$ because the induced representations on 
the right hand side of (\ref{vinS-rep}) have the right local $L$-functions. The conclusion is 
that $\Pi'$ is a transfer of $\pi$ (outside of $T$) and by the uniqueness of the transfer, 
Theorem \ref{main}, we have $\Pi'_{v} \cong \Pi_{v}$ for $v \in S$. This completes the proof. 
\end{proof}

\subsection{Ramanujan Estimates}
Following \cite{ckpss2}, we introduce the following notation. Let $\Pi=\otimes_{v} \Pi_{v}$ 
be a unitary, cuspidal, automorphic representation of $\GL(m,\A_{k})$. For each 
place $v$ the representation $\Pi_{v}$ is unitary generic and can be written as a full induced 
representation 
\[ \Pi_{v} \cong \operatorname{Ind}\left(\Pi_{1,v}|\det|^{a_{1,v}}\otimes\cdots\otimes
\Pi_{r,v}|\det|^{a_{r,v}} \right) \]
with $a_{1,v} > \cdots > a_{r,v}$ and each $\Pi_{i,v}$ tempered. 

\begin{defi}
We say $\Pi$ satisfies $H(\theta_{m})$ with $\theta_{m} \ge 0$ if for all places $v$ we 
have $-\theta_{m} \le a_{i,v} \le \theta_{m}$. 
\end{defi}

The classification of the generic unitary dual of $\GL(m)$, \cite{tadic,vogan}, trivially 
gives $H(1/2)$. The best result currently known for a general number field is 
$\theta_{m} = 1/2 - 1/(m^{2}+1)$ proved in \cite{lrs} with a few better results known for small 
values of $m$ and over $\Q$. The Ramanujan conjecture for $\GL(m)$ demands $H(0)$. 

Similarly, if $\pi=\otimes_{v} \pi_{v}$ is a unitary, generic, cuspidal, automorphic representation 
of $\G(n,\A_{k})$ each $\pi_{v}$ can be written as a full induced representation 
\[ \pi_{v} \cong \operatorname{Ind}\left(\pi_{1,v}|\det|^{b_{1,v}}\otimes\cdots\otimes
\pi_{r,v}|\det|^{b_{r,v}}\otimes\tau_{v}\right) , \]
where each $\pi_{i,v}$ is a tempered representation of some $\GL(n_{i},k_{v})$ and $\tau_{v}$ 
is a tempered, generic representation of some $\G(m,k_{v})$ with $n_{1}+\cdots+n_{t}+m=n$. 

\begin{defi}
We say $\pi$ satisfies $H(\theta_{n})$ with $\theta_{n} \ge 0$ if for all places $v$ we 
have $-\theta_{m} \le b_{i,v} \le \theta_{m}$. 
\end{defi}

Again, we would have the bound $H(1)$ trivially as a consequence of the classification of the 
generic unitary dual and the Ramanujan conjecture demands $H(0)$. 

\begin{prop}\label{theta}
Let $k$ be a number field and assume that all the unitary, cuspidal representations of 
$\GL(m,\A_{k})$ satisfy $H(\theta_{m})$ for $2\le m \le 2n$ and 
$\theta_{2} \le \theta_{3} \le \cdots \le \theta_{2n}$. Then any globally generic, unitary, 
cuspidal representation $\pi$ of $\G(n,\A_{k})$ satisfies $H(\theta_{2n})$. In fact, if 
$\pi$ transfers to a non-cuspidal representation $\Pi=\Pi_{1}\boxplus\cdots\boxplus\Pi_{t}$, 
then $\pi$ satisfies the possibly better bound of $H(\theta)$ where 
$\theta = \max \{\theta_{n_1}, \theta_{n_2}, \dots, \theta_{n_t}\}$. 
Here, $\Pi_{i}$ is a unitary, cuspidal representation of $\GL(n_{i},\A_{k})$.  
\end{prop}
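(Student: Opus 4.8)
The plan is to reduce the bound for $\pi$ on $\G(n,\A_k)$ to bounds for the cuspidal pieces $\Pi_i$ of its transfer $\Pi$ on general linear groups, using the local Langlands-type compatibility between the parameters of $\pi_v$ and $\Pi_v$ established in Proposition \ref{vinS}. The key point is that at every place $v$, the Langlands data (the exponents $b_{i,v}$) of $\pi_v$ appear directly among the Langlands data of $\Pi_v$: by the explicit form \eqref{vinS-rep} of $\Pi_v$, the tempered support of $\Pi_v$ consists of the pieces $\pi_{i,v}|\det|^{b_{i,v}}$, their $\omega_v$-twisted contragredients with exponents $-b_{i,v}$, and the tempered block $\Pi_{0,v}$ with exponent $0$. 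Hence the exponents occurring in $\pi_v$ form a subset (up to sign) of those occurring in $\Pi_v$.

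First I would invoke Theorem \ref{main} to write $\Pi = \Pi_1 \boxplus \cdots \boxplus \Pi_t$ as an isobaric sum of unitary cuspidal representations $\Pi_i$ of $\GL(n_i,\A_k)$, so that for each place $v$ one has $\Pi_v \cong \Pi_{1,v} \boxplus \cdots \boxplus \Pi_{t,v}$, and each $\Pi_{i,v}$ — being the local component of a unitary cuspidal representation of $\GL(n_i)$ — is itself a full induced representation whose exponents lie in $[-\theta_{n_i}, \theta_{n_i}]$ by the hypothesis $H(\theta_{n_i})$. Next I would observe that the tempered-support exponents of the isobaric sum $\Pi_v$ are exactly the union of the tempered-support exponents of the $\Pi_{i,v}$, so all exponents of $\Pi_v$ lie in $[-\theta,\theta]$ with $\theta = \max_i \theta_{n_i}$. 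Combining this with the containment of exponent-sets coming from Proposition \ref{vinS} gives $-\theta \le b_{i,v} \le \theta$ for all $v$, i.e. $\pi$ satisfies $H(\theta)$. Finally, since each $n_i \le 2n$ (indeed $n_i > 1$ by Theorem \ref{main}) and the $\theta_m$ are assumed nondecreasing, $\theta \le \theta_{2n}$, which yields the stated bound $H(\theta_{2n})$ in general and the sharper $H(\theta)$ when $\Pi$ is non-cuspidal.

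The main subtlety, which I would treat carefully rather than wave away, is the bookkeeping in passing from the induced form \eqref{vinS-rep} of $\Pi_v$ to its \emph{canonical} Langlands datum as a full induced representation from tempered data with strictly decreasing exponents: one must check that no cancellation or recombination moves an exponent outside the interval $[-\theta,\theta]$. This is routine because the Langlands classification on $\GL$ is by \emph{multisets} of (tempered, real-exponent) pairs, and reordering or grouping equal exponents does not change the set of exponents that occur; the tempered block $\Pi_{0,v}$ contributes only exponent $0$, well inside the interval. I would also note that the twist by $\omega_v$ in \eqref{vinS-rep}, being unitary, does not affect exponents. So the only real input beyond pure bookkeeping is Proposition \ref{vinS} itself (hence ultimately the converse theorem and multiplicativity of $\gamma$-factors), which is already established.
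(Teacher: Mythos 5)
Your proof is correct and follows the same route the paper intends. The paper does not actually spell out the argument---it simply cites the analogous $\GSp(4)\to\GL(4)$ case from \cite{compos} and adds the one hint that Proposition \ref{vinS} handles the ramified non-archimedean places---and your write-up fills in exactly that argument: isobaric decomposition of $\Pi$ from Theorem \ref{main}, the hypothesis $H(\theta_{n_i})$ applied to each cuspidal constituent $\Pi_i$, the observation that exponents of $\Pi_v$ are the union of those of the $\Pi_{i,v}$, and the comparison with the exponents of $\pi_v$ via Proposition \ref{vinS} at ramified places (and via the explicit parameter matching at archimedean and unramified places). Your remark on the bookkeeping between the induced form \eqref{vinS-rep} and the canonical Langlands datum is a fair point to flag, and your treatment of it (reordering/grouping preserves the multiset of exponents) is the standard and correct resolution.
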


\begin{proof}
The argument is exactly the same as the proof of \cite[Theorem 3.3]{compos} and we do not 
repeat it here. Note that our Proposition \ref{vinS} is used for the ramified non-archimedean 
places. 
\end{proof}

\begin{cor}
Every globally generic, unitary, cuspidal, automorphic representation $\pi$ of $\G(n,\A_{k})$ 
satisfy 
\[ H\left(\frac{4n^{2}-1}{2(4n^{2}+1)}\right). \] 
If $\pi$ transfers to a non-cuspidal, automorphic 
representation $\Pi=\Pi_{1}\boxplus\cdots\boxplus\Pi_{t}$, then we can replace $n$ with 
the size of the largest $\GL$ block appearing, resulting in a better estimate.  
\end{cor}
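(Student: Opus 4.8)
The plan is to obtain this as an immediate consequence of Proposition \ref{theta} together with the best bounds toward the Ramanujan conjecture for general linear groups over a number field that are currently available. First I would invoke the result of \cite{lrs}, which asserts that every unitary, cuspidal, automorphic representation of $\GL(m,\A_{k})$ satisfies $H(\theta_{m})$ with
\[ \theta_{m} = \frac{1}{2} - \frac{1}{m^{2}+1}. \]
The only preliminary point to record is the elementary fact that $m \mapsto \theta_{m}$ is monotone increasing; in particular $\theta_{2} \le \theta_{3} \le \cdots \le \theta_{2n}$, so the hypotheses of Proposition \ref{theta} are met with this choice of the $\theta_{m}$.

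Next I would simply feed this into Proposition \ref{theta}: every globally generic, unitary, cuspidal representation $\pi$ of $\G(n,\A_{k})$ satisfies $H(\theta_{2n})$, and
\[ \theta_{2n} = \frac{1}{2} - \frac{1}{4n^{2}+1} = \frac{4n^{2}-1}{2(4n^{2}+1)}, \]
which is the asserted bound. For the refined statement, if $\pi$ transfers to a non-cuspidal isobaric representation $\Pi = \Pi_{1} \boxplus \cdots \boxplus \Pi_{t}$ with $\Pi_{i}$ a unitary, cuspidal representation of $\GL(n_{i},\A_{k})$, then the second half of Proposition \ref{theta} gives that $\pi$ satisfies $H(\theta)$ with $\theta = \max\{\theta_{n_{1}},\dots,\theta_{n_{t}}\}$. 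By monotonicity of $\theta_{m}$ this equals $\theta_{n_{\max}}$, where $n_{\max} = \max_{i} n_{i} \le 2n$; hence one may replace $2n$ by the size $n_{\max}$ of the largest $\GL$-block appearing, yielding the improved estimate $H\bigl(\tfrac{n_{\max}^{2}-1}{2(n_{\max}^{2}+1)}\bigr)$.

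There is essentially no serious obstacle here — all the work has been pushed into Proposition \ref{theta} (which in turn rests on Proposition \ref{vinS} at the ramified places) and into the input from \cite{lrs}. The only points that require a moment's care are: checking the monotonicity of $\theta_{m}$ so that Proposition \ref{theta} genuinely applies; being sure to quote the bound of \cite{lrs} valid over an arbitrary number field rather than one of the sharper bounds available only over $\Q$ or for small $m$ (which could, of course, be substituted to improve the numerical constant further in special cases); and observing that for $t>1$ one has $n_{\max}<2n$, so that the refined estimate is strictly stronger than the generic one.
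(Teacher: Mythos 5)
Your proof is correct and follows exactly the same route as the paper, which simply combines Proposition \ref{theta} with the $\GL(m)$ estimate $\theta_m = 1/2 - 1/(m^2+1)$ from \cite{lrs}. Your added observations (monotonicity of $\theta_m$, the arithmetic reducing $\theta_{2n}$ to the stated form, and that the refined bound comes from $n_{\max} < 2n$ when $t>1$) are the routine checks the paper leaves implicit.
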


\begin{proof}
This is immediate if we combine Proposition \ref{theta} with the $\GL(m)$ estimate of 
$1/2 - 1/(m^{2}+1)$. 
\end{proof}

We should remark that for small values of $n$ it is possible to obtain better 
estimates because much better estimates are available for small general linear groups 
(and also for $k=\Q$). 
For an example, see \cite[\S 3.1]{compos}

\begin{cor}
The Ramanujan conjecture for the unitary, cuspidal representations of $\GL(m,\A_{k})$ for 
$m \le 2n$ implies the Ramanujan conjecture for the generic spectrum of $\G(n,\A_{k})$. 
\end{cor}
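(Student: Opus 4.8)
The plan is to deduce this directly from Proposition \ref{theta} by specializing the exponents to zero. First I would recall that, in the notation set up just above, the Ramanujan conjecture for the unitary cuspidal spectrum of $\GL(m,\A_{k})$ is precisely the assertion that every such representation satisfies $H(0)$: by the classification of the generic unitary dual of $\GL(m)$, a unitary cuspidal $\Pi$ is tempered at a place $v$ exactly when all the exponents $a_{i,v}$ in its decomposition $\Pi_{v} \cong \operatorname{Ind}(\Pi_{1,v}|\det|^{a_{1,v}}\otimes\cdots\otimes\Pi_{r,v}|\det|^{a_{r,v}})$ vanish. Hence assuming the Ramanujan conjecture for $\GL(m,\A_{k})$ for all $m \le 2n$ is the same as saying one may take $\theta_{m} = 0$ for all $2 \le m \le 2n$ (the value $m=1$ never intervenes, since a unitary Hecke character is its own tempered packet). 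With this choice the monotonicity hypothesis $\theta_{2} \le \theta_{3} \le \cdots \le \theta_{2n}$ of Proposition \ref{theta} holds trivially.

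Applying Proposition \ref{theta} with these $\theta_{m}$ then shows that every globally generic, unitary, cuspidal, automorphic representation $\pi$ of $\G(n,\A_{k})$ satisfies $H(\theta_{2n}) = H(0)$, which is exactly the Ramanujan conjecture for the generic spectrum of $\G(n,\A_{k})$. If $\pi$ happens to transfer to a non-cuspidal isobaric representation $\Pi = \Pi_{1} \boxplus \cdots \boxplus \Pi_{t}$ with $\Pi_{i}$ a unitary cuspidal representation of $\GL(n_{i},\A_{k})$, the same proposition gives the a priori sharper (here identical) bound $H(\max_{i} \theta_{n_{i}}) = H(0)$; this uses only $n_{1}+\cdots+n_{t} = 2n$, so that each $n_{i} \le 2n$, which is part of Theorem \ref{main}. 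There is no genuine obstacle at this stage: all the substantive work has already been carried out in Proposition \ref{theta} — which in turn relies on Proposition \ref{vinS} to control the ramified non-archimedean components — and the corollary is merely the specialization $\theta_{m} \equiv 0$ of that statement.
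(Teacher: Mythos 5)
Your proof is correct and follows exactly the paper's approach: the paper also deduces the corollary immediately from Proposition \ref{theta} by taking all $\theta_{m}=0$. Your write-up simply supplies more of the surrounding bookkeeping.
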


\begin{proof}
This is an immediate corollary of Proposition \ref{theta} where all the $\theta$'s are zero.  
\end{proof}

\subsection{Image of Kim's exterior square} 

H. Kim proved the exterior square transfer of automorphic representations from $\GL(4,\A_{k})$ 
to $\GL(6, \A_{k})$ \cite{kim-jams,henniart-ext2}.  A. Raghuram and the first author gave a complete 
cuspidality criterion for this transfer, determining when the image of this transfer is not cuspidal \cite{ar-ext2}. 
A natural question about the image of this transfer is which automorphic representations of $\GL(6,\A_{k})$ 
are indeed in the image of this transfer. We can now answer this question as an application of our 
Theorem \ref{main}. 

\begin{prop}\label{ext2}  
Let $\Pi$ be a cuspidal, automorphic representation of $\GL(6,\A_{k})$. Then there is a globally 
generic, cuspidal, automorphic representation $\pi$ of $\GL(4,\A_{k})$ such that $\Pi = \wedge^{2} \pi$ 
if and only if there is an id\`ele class character $\omega$ such that the partial $L$-function 
$L^{S}(s,\Pi, \sym^{2}\otimes\omega^{-1})$ has a pole at $s=1$ for $S$ a sufficiently large finite set of 
places of $k$ including all the archimedean ones. 
\end{prop}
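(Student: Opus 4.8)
The plan is to read off Proposition~\ref{ext2} from Theorem~\ref{main} by exploiting the exceptional isomorphism of Dynkin types $D_{3}=A_{3}$. Concretely, $\mathrm{Spin}(6)\cong\mathrm{SL}(4)$, and once similitudes are included there is a central isogeny relating the split group $\G(3)=\GSpin(6)$ to $\GL(4)$; dually, $\GSO(6,\C)\cong(\GL(4,\C)\times\GL(1,\C))/\{\pm1\}$, under which the standard six-dimensional representation of $\GSO(6,\C)$ restricts on the $\GL(4,\C)$-factor to $\wedge^{2}$, twisted by a character of the similitude factor. The first step is to make this dictionary precise at the level of Satake parameters: a globally generic cuspidal representation $\pi$ of $\GL(4,\A_{k})$ should correspond to a globally generic cuspidal representation $\pi^{\flat}$ of $\GSpin(6,\A_{k})$ (with $\omega_{\pi^{\flat}}$ a square root of $\omega_{\pi}$ fixed by the isogeny), in such a way that the functorial transfer of $\pi^{\flat}$ to $\GL(6,\A_{k})$ attached to $\iota$, as studied in Section~\ref{q-split-sec} and \cite{duke}, coincides with Kim's exterior square lift $\wedge^{2}\pi$ up to the harmless similitude twist. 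This identification I would verify place by place at the unramified primes.

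For the ``only if'' direction, suppose $\Pi=\wedge^{2}\pi$ is cuspidal and set $\omega=\omega_{\pi}$. The perfect pairing $\wedge^{2}V\times\wedge^{2}V\to\wedge^{4}V$ for $\dim V=4$ identifies $(\wedge^{2}V)^{\vee}$ with $\wedge^{2}V\otimes(\wedge^{4}V)^{-1}$, whence $\w{\Pi}\cong\Pi\otimes\omega^{-1}$, i.e. $\Pi\cong\w{\Pi}\otimes\omega$; and since $\det\wedge^{2}=(\det)^{3}$ on $\GL(4)$ we get $\omega_{\Pi}=\omega_{\pi}^{3}=\omega^{3}$, so the quadratic character $\mu=\omega_{\Pi}\omega^{-3}$ is trivial and we are in the split even case. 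By the dictionary of the first step, $\Pi$ is then the $\iota$-transfer of the generic cuspidal representation $\pi^{\flat}$ of the split $\GSpin(6,\A_{k})$, and since $\Pi$ is cuspidal Theorem~\ref{main} (the even case) forces $L^{S}(s,\Pi,\sym^{2}\otimes\omega^{-1})$ to have a simple pole at $s=1$ for $S$ sufficiently large. One could equally invoke Theorem~\ref{image} here.

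For the converse, assume there is an id\`ele class character $\omega$ with $L^{S}(s,\Pi,\sym^{2}\otimes\omega^{-1})$ having a pole at $s=1$. By Lemma~\ref{twsited-L}(b) this gives $\Pi\cong\w{\Pi}\otimes\omega$, and since $\Pi$ lives on $\GL(6)$ we get $\omega_{\Pi}^{2}=\omega^{6}$, so $\mu=\omega_{\Pi}\omega^{-3}$ is a quadratic id\`ele class character. The pole being of symmetric-square rather than exterior-square type places us in the even case, so by the descent results of Hundley and Sayag \cite{hs-announce,hs-ppt-odd,hs-ppt-even} (the converse half of Theorem~\ref{main}) the cuspidal $\Pi$ is the $\iota$-transfer of a globally generic cuspidal representation of an even general spin group in six variables over $k$. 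One then argues that this group is the split $\GSpin(6)$, equivalently that $\mu$ is trivial, and applies the dictionary of the first step in reverse: the resulting generic cuspidal representation of $\GSpin(6,\A_{k})$ arises, via the exceptional isogeny, from a globally generic cuspidal representation $\pi$ of $\GL(4,\A_{k})$ with $\wedge^{2}\pi\cong\Pi$, consistently with the cuspidality criterion of \cite{ar-ext2}.

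I expect the main obstacle to be precisely the first step: pinning down the isogeny $\GSpin(6)\leftrightarrow\GL(4)$ tightly enough, keeping careful track of central characters and of the auxiliary similitude twist, to identify the abstract $\iota$-transfer of Section~\ref{q-split-sec} with Kim's $\wedge^{2}$, since everything downstream is a citation of Theorem~\ref{main}, Lemma~\ref{twsited-L}, and the Hundley--Sayag descent. A secondary delicate point, in the ``if'' direction, is ensuring the descended object is defined over $\GL(4,\A_{k})$ itself, i.e. that the general spin group produced by the descent is the split form rather than a quasi-split unitary similitude form; this is where the cuspidality hypothesis on $\Pi$ must be brought to bear.
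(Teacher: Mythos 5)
Your approach is essentially the paper's own: reduce Kim's $\wedge^{2}$ to the split even case ($m=3$, transfer from $\GSpin(6)$) of Theorem~\ref{main}, read off the ``only if'' direction from Theorem~\ref{main} with $\omega=\omega_{\pi}$, and invoke the Hundley--Sayag descent for the converse. What the paper does that you reconstruct by hand is simply cite \cite[Prop.~7.6]{duke} for the identification of the $\iota$-transfer from the split $\GSpin(6)$ with Kim's exterior square. Your unpacking of that identification via the isogeny $\GSpin(6)\to\GL(4)$, together with the computations $\det\wedge^{2}=\det^{3}$ and $(\wedge^{2}V)^{\vee}\cong\wedge^{2}V\otimes(\wedge^{4}V)^{-1}$ giving $\Pi\cong\w{\Pi}\otimes\omega_{\pi}$ and $\mu=1$ in the forward direction, is correct and is exactly the content of the cited result, so the ``main obstacle'' you anticipate is already resolved in \cite{duke}.

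The one point worth pressing: in the ``if'' direction, your deduction only yields that $\mu=\omega_{\Pi}\omega^{-3}$ is a quadratic id\`ele class character, and you correctly observe that if $\mu$ is nontrivial the descent produces a representation of the quasi-split non-split $\GSpin^{*}(6)$ (which, under $D_{3}={}^{2}A_{3}$, is a similitude unitary group, not $\GL(4)$), so one would not obtain a $\pi$ on $\GL(4,\A_{k})$. You flag this but resolve it only with the vague remark that ``this is where the cuspidality hypothesis on $\Pi$ must be brought to bear''; cuspidality of $\Pi$ is used to run the descent, but by itself it does not force $\mu$ to be trivial. The paper's own proof is likewise silent here, citing Hundley--Sayag without addressing the split/non-split dichotomy explicitly. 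So this is not a mistake you introduced, but you should not present the cuspidality of $\Pi$ as if it closes the gap; either one needs an argument that the hypothesis forces $\omega_{\Pi}=\omega^{3}$, or the proposition should be read as implicitly restricting to such $\omega$.
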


\begin{proof}
The proposition follows immediately from our Theorem \ref{main} if we recall that Kim's exterior square 
transfer from $\GL(4)$ to $\GL(6)$ is a special case of the transfer in the split even case of our theorem 
when $m=3$, i.e., the transfer from $\GSpin(6)$ to $\GL(6)$ \cite[Prop. 7.6]{duke}. 

If we assume that $\Pi$ is the transfer of $\pi$, then we have proved that we can take $\omega = \omega_{\pi}$, 
the central character of $\pi$. The opposite direction requires the descent method in our cases and would follow 
from J. Hundley and E. Sayag's ``lower bound'' result for our transfer \cite{hs-announce,hs-ppt-odd,hs-ppt-even} 
because Kim's $\wedge^{2}$ is a special case of transfer from $\GSpin(6)$ to $\GL(6)$ as mentioned above.  
\end{proof}

Another natural question regarding the image of Kim's exterior square transfer is to determine ``the fiber'' 
for each cuspidal $\Pi$ which is indeed in the image. In other words, determine all representations $\pi$ 
such that $\Pi=\wedge^{2}\pi$. 

A further interesting question would be to explore possible overlaps between various transfers to cuspidal 
representations of $\GL(6)$.  As pointed out in \cite[\S 6]{cpss-clay} (for the untwisted $\omega=1$ case) 
and as it is apparent from our Theorem \ref{main} there can be no overlap between the images of transfers 
from $\GSpin(7)$ and quasi-split forms of $\GSpin(6)$ (which includes Kim's transfer) to $\GL(6)$. However, 
there may be potential overlaps with the transfer from unitary groups or the 
Kim-Shahidi transfer \cite{kim-shahidi-annals} 
from $\GL(2) \times \GL(3)$ to $\GL(6)$.


\begin{thebibliography}{10000}

\bibitem[Ar]{arthur} 
	J. Arthur. 
	The Endoscopic Classification of Representations: Orthogonal and Symplectic Groups. 
	Colloquium Publication Series, AMS. 
	To appear. 

\bibitem[A]{asgari}
	M. Asgari. 
	Local $L$-functions for split spinor groups.  
	Canad. J. Math.  54  (2002),  no. 4, 673--693. 

\bibitem[ACS]{me-cogdell-shahidi}
	M. Asgari, J. Cogdell and F. Shahidi. 
	Local Transfer and Reducibility of Induced Representations of $p$-adic Classical Groups. 
	In preparation. 

\bibitem[AR]{ar-ext2}
	M. Asgari and A. Raghuram. 
	A Cuspidality Criterion for the Exterior Square Transfer of Cusp Forms on $\rm GL(4)$. 
	Clay Mathematics Proceedings (volume in honor of F. Shahidi's sixtieth birthday). 
	To appear. Available at \texttt{arXiv:math.NT/0712.4315} .    

\bibitem[AS1]{duke}
    M. Asgari and F. Shahidi. 
    Generic transfer for general spin groups. 
    Duke Math. J., 132 (2006), no 1, 137--190. 

\bibitem[AS2]{compos}
    M. Asgari and F. Shahidi. 
    Generic transfer from $\rm GSp(4)$ to $\rm GL(4)$.  
    Compos. Math.  142  (2006),  no. 3, 541--550.

\bibitem[Bnk]{banks}
	W. D. Banks. 
	Twisted symmetric-square $L$-functions and the nonexistence of Siegel zeros on ${\rm GL}(3)$.  
	Duke Math. J.  87  (1997),  no. 2, 343--353.

\bibitem[Blt]{belt}
	D. Belt. 
	On the holomorphy of the exterior square $L$-functions. 
	Thesis, Purdue University. 
	
\bibitem[B]{borel-corvallis}
	A. Borel. 
	Automorphic ${L}$-functions. 
	In 
	Automorphic forms, representations and $L$-functions, Part 2 (Corvallis, OR, 1977), 
	Proc. Sympos. Pure Math. 33, 27--61. Amer. Math. Soc., Providence, R.I., 1979.

\bibitem[Bou]{bourbaki} 
	N. Bourbaki. 
	\'{E}l\'ements de math\'ematique. {F}asc. {X}{X}{X}{I}{V}. {G}roupes et alg\`ebres de {L}ie. 
	Hermann, Paris, 1981.

\bibitem[BG]{bump-ginzburg-annals} 
	D. Bump and D. Ginzburg. 
	Symmetric square $L$-functions on ${\rm GL}(r)$.  
	Ann. of Math. (2)  136  (1992),  no. 1, 137--205.

\bibitem[CSh]{cas-sha} 
	W. Casselman and F. Shahidi. 
	On irreducibility of standard modules for generic representations. 	
	Ann. Sci. \'Ecole Norm. Sup. (4) 31 (1998), no. 4, 561--589. 

\bibitem[CS]{cs}
	W. Casselman and J. Shalika. 
	The unramified principal series of $p$-adic groups. II. The Whittaker function. 
	Compositio Math. 41 (1980), no. 2, 207--231. 	

\bibitem[CKPSS1]{ckpss1}
	J. W. Cogdell, H. H. Kim, I. I. Piatetski-Shapiro and F. Shahidi. 
	On lifting from classical groups to ${\rm GL}_N$.  
	Publ. Math. Inst. Hautes \'Etudes Sci.  No. 93  (2001), 5--30.

\bibitem[CKPSS2]{ckpss2}
	J. W. Cogdell, H. H. Kim, I. I. Piatetski-Shapiro and F. Shahidi. 
	Functoriality for the classical groups.  
	Publ. Math. Inst. Hautes \'Etudes Sci.  No. 99  (2004), 163--233.

\bibitem[CPS1]{cps1}
	J. W. Cogdell and I. I. Piatetski-Shapiro. 
	Converse theorems for ${\rm GL}_n$.  
	Inst. Hautes \'Etudes Sci. Publ. Math.  No. 79  (1994), 157--214.

\bibitem[CPS2]{cps2}
	J. W. Cogdell and I. I. Piatetski-Shapiro. 
	Converse theorems for ${\rm GL}_n$. II.  
	J. Reine Angew. Math.  507  (1999), 165--188.

\bibitem[CPSS1]{cpss-stab}
	J. W. Cogdell, I. I. Piatetski-Shapiro and F. Shahidi. 
	Stability of $\gamma$-factors for quasi-split groups. 
	J. Inst. Math. Jussieu 7 (2008), no. 1, 27--66. 

\bibitem[CPSS2]{cpss-clay}
	J. W. Cogdell, I. I. Piatetski-Shapiro and F. Shahidi. 
	Functoriality for the quasi-split classical groups. 
	Clay Mathematics Proceedings (volume in honor of F. Shahidi's sixtieth birthday). 
	To appear. Available at \texttt{http://www.math.ohio-state.edu/$\sim$cogdell/} . 

\bibitem[G]{ginzburg}
	D. Ginzburg. 
	$L$-functions for ${\rm SO}_n\times {\rm GL}_k$.  
	J. Reine Angew. Math.  405  (1990), 156--180.

\bibitem[GPS]{gps}
	S. Gelbart and I. I. Piatetski-Shapiro. 
	$L$-functions for $G \times \rm GL(n)$. 
	in 
	Explicit constructions of automorphic $L$-functions. 
	Lecture Notes in Mathematics, 1254. Springer-Verlag, Berlin, 1987. vi+152 pp.

\bibitem[GS]{gelbart-shahidi}
	S. Gelbart and F. Shahidi. 
	Boundedness of automorphic $L$-functions in vertical strips.  
	J. Amer. Math. Soc.  14  (2001),  no. 1, 79--107

\bibitem[HT]{harris-taylor}
	M. Harris and R. Taylor. 
	The geometry and cohomology of some simple Shimura varieties. 
	With an appendix by Vladimir G. Berkovich. 
	Annals of Mathematics Studies, 151. Princeton University Press, Princeton, NJ, 2001. viii+276 pp.

\bibitem[Hei]{hei-muic2007}
	V. Heiermann and G. Mui\'c. 
	On the standard modules conjecture. 
	Math. Z. 255 (2007), no. 4, 847--853. 
	
\bibitem[HO]{hei-opd-ppt}	
	V. Heiermann and E. Opdam.
	On the tempered L-function conjecture. 
	Preprint, 2009. Available at \texttt{arXiv:math.NT/0908.0699} .
	
\bibitem[H1]{henniart}
	G. Henniart. 
	Une preuve simple des conjectures de Langlands pour ${\rm GL}(n)$ sur un corps $p$-adique. 
	Invent. Math.  139  (2000),  no. 2, 439--455.

\bibitem[H2]{henniart-ext2}
	G. Henniart. 
	Sur la fonctorialit\'e, pour $\rm GL(4)$, donn\'ee par le carr\'e ext\'erieur. 
	Mosc. Math. J.  9  (2009),  no. 1, 33--45.

\bibitem[HS1]{hs-announce} 
	J. Hundley and E. Sayag. 
	Descent construction for GSpin groups: main results and applications. 
	Electron. Res. Announc. Math. Sci. 16 (2009), 30--36. 
    
\bibitem[HS2]{hs-ppt-odd}
    J. Hundley and E. Sayag. 
    Descent Construction for GSpin Groups--Odd Case. 
    Preprint. Available at \texttt{http://opensiuc.lib.siu.edu/math\_articles/98/} . 

\bibitem[HS3]{hs-ppt-even}
    J. Hundley and E. Sayag. 
    Descent Construction for GSpin Groups--Even Case. 
    Preprint. Available at \texttt{http://opensiuc.lib.siu.edu/math\_articles/102/} . 

\bibitem[JS1]{js1} 
	H. Jacquet and J. Shalika. 
	On {E}uler products and the classification of automorphic forms. 
	Amer. J. Math., {I.} Amer. J. Math., 103(3):499--558, 1981.

\bibitem[JS2]{js2}
    H. Jacquet and J. Shalika. 
    On {E}uler products and the classification of automorphic forms. 
    Amer. J. Math., {II.} Amer. J. Math., 103(4):777--815, 1981.

\bibitem[JS3]{js}
	H. Jacquet and J. Shalika. 
	A lemma on highly ramified $\epsilon$-factors. 
	Math. Ann. 271 (1985), no. 3, 319--332. 	
		
\bibitem[JS4]{jac-shalika-ann-arbor}
    H. Jacquet and J. Shalika. 
    Exterior square $L$-functions.  
    in 
    Automorphic forms, Shimura varieties, and $L$-functions, Vol. II (Ann Arbor, MI, 1988),  
    143--226, Perspect. Math., 11, Academic Press, Boston, MA, 1990. 
    
\bibitem[JPSS]{jpss-factor}
	H. Jacquet, I. I. Piatetski-Shapiro and J. Shalika. 
	Rankin-Selberg convolutions.  
	Amer. J. Math.  105  (1983),  no. 2, 367--464.

\bibitem[K1]{kim-israelJ}
	H. H. Kim. 
	Langlands-Shahidi method and poles of automorphic {$L$}-functions. {II}.
	Israel J. Math., 117:261--284, 2000.
	Correction: Israel J. Math., 118:379, 2000.
	
\bibitem[K2]{kim-jams}
	H. H. Kim. 
	Functoriality for the exterior square of {${\rm GL}\sb 4$} and the symmetric fourth of {${\rm GL}\sb 2$}.
	With appendix 1 by D. Ramakrishnan and appendix 2 by H. Kim and P. Sarnak.
	J. Amer. Math. Soc., 16(1):139--183 (electronic), 2003.

\bibitem[K3]{kim2005}
	H. H. Kim. 
	On local $L$-functions and normalized intertwining operators.  
	Canad. J. Math.  57  (2005),  no. 3, 535--597.	 

\bibitem[KK]{kim-kim-shahidi-vol} 
	H. H. Kim and W. Kim. 
	On the local $L$-functions and normalized intertwining operators {II}; quasi-split groups. 
	Clay Mathematics Proceedings (volume in honor of F. Shahidi's sixtieth birthday). 
	To appear. 

\bibitem[KS1]{kim-shahidi-annals} 
	H. H. Kim and F. Shahidi. 
	Functorial products for ${\rm GL}_2\times{\rm GL}_3$ and the symmetric cube for ${\rm GL}_2$. 
	With an appendix by Colin J. Bushnell and Guy Henniart.  
	Ann. of Math. (2)  155  (2002),  no. 3, 837--893.

\bibitem[KS2]{kim-shahidi-jrms} 
	H. H. Kim and F. Shahidi. 
	On simplicity of poles of automorphic $L$-functions.  
	J. Ramanujan Math. Soc.  19  (2004),  no. 4, 267--280.
    
\bibitem[WKim]{w-kim}
	W. Kim. 
	Square integrable representations and the standard module conjecture for general spin groups. 
	Canad. J. Math. 61 (2009), no. 3, 617--640. 

\bibitem[KoSh]{kottwitz-shelstad}
	R. Kottwitz and D. Shelstad.
	Foundations of twisted endoscopy.
	Ast\'erisque 255 (1999), 1--190.
	
\bibitem[L1]{langlands}
	R. P. Langlands. 
	Automorphic representations, Shimura varieties, and motives. Ein M\"archen.  
	In 
	Automorphic forms, representations and $L$-functions, Part 2 (Corvallis, OR, 1977), 
	Proc. Sympos. Pure Math. 33, 205--246. Amer. Math. Soc., Providence, R.I., 1979.

\bibitem[L2]{langlands-real} 
	R. P. Langlands.
	{\it On the classification of irreducible representations of real algebraic groups.}
	In 
	Representation theory and harmonic analysis on semisimple Lie groups.  
	Math. Surveys Monogr., 31, 101--170. Amer. Math. Soc., Providence, RI, 1989.

\bibitem[LL]{langlands-labesse}
	J.-P. Labesse and R. P. Langlands. 
	$L$-indistinguishability for ${\rm SL}(2)$. 
	Canad. J. Math. 31 (1979), no. 4, 726--785. 

\bibitem[LRS]{lrs}
	W. Luo and Z. Rudnick and P. Sarnak.
	On the generalized Ramanujan conjecture for ${\rm GL}(n)$.  
	In 
	Automorphic forms, automorphic representations, and arithmetic (Fort Worth, TX, 1996),  301--310, 
	Proc. Sympos. Pure Math., 66, Part 2, Amer. Math. Soc., Providence, RI, 1999.

\bibitem[Mu]{muic}
	G. Mui\'c. 
	A proof of Casselman-Shahidi's conjecture for quasi-split classical groups. 
	Canad. Math. Bull. 44 (2001), no. 3, 298--312. 	

\bibitem[MuSh]{muic-shahidi}
	G. Mui\'c and F. Shahidi. 
	Irreducibility of standard representations for Iwahori-spherical representations.
	Math. Ann. 312 (1998), no. 1, 151--165. 

\bibitem[Sat]{satake}
	I. Satake.
	Theory of spherical functions on reductive algebraic groups over $p$-adic fields. 
	Inst. Hautes \'Etudes Sci. Publ. Math. No. 18 1963 5--69.
	
\bibitem[Sh1]{shahidi:duke85}
	F. Shahidi. 
	Local coefficients as {A}rtin factors for real groups. 
	Duke Math. J., 52(4):973--1007, 1985.
	
\bibitem[Sh2]{shahidi1988-annals}
	F. Shahidi. 
	On the Ramanujan conjecture and finiteness of poles for certain $L$-functions. 
	Ann. of Math. (2)  127  (1988),  no. 3, 547--584. 
	
\bibitem[Sh3]{shahidi1990-annals}
	F. Shahidi. 
	A proof of Langlands' conjecture on Plancherel measures; complementary series for $p$-adic groups.  
	Ann. of Math. (2)  132  (1990),  no. 2, 273--330. 
	
\bibitem[Sh4]{shahidi-ps-vol}
	F. Shahidi. 
	On multiplicativity of local factors. 
	In 
	Festschrift in honor of I. I. Piatetski-Shapiro on the occasion of his sixtieth birthday, 
	Part II (Ramat Aviv, 1989),  279--289, 
	Israel Math. Conf. Proc., 3, Weizmann, Jerusalem, 1990. 

\bibitem[Sh5]{shahidi-pac} 
    F. Shahidi. 
    On non-vanishing of twisted symmetric and exterior square $L$-functions for ${\rm GL}(n)$. 
    Olga Taussky-Todd: in memoriam. 
    Pacific J. Math.  1997,  Special Issue, 311--322.

\bibitem[Sh6]{shahidi-ppt} 
    F. Shahidi. 
    Arthur Packets and the Ramanujan Conjecture. 
    Kyoto J. Math. (memorial issues for the late M. Nagata). 
    To appear. Available at \texttt{arXiv:math.NT/1007.2132} .    

\bibitem[Sou1]{soudry-mem}
	D. Soudry. 
	Rankin-Selberg convolutions for ${\rm SO}_{2l+1}\times{\rm GL}_n$: local theory.  
	Mem. Amer. Math. Soc.  105  (1993),  no. 500, vi+100 pp.
		
\bibitem[Sou2]{soudry} 
    D. Soudry. 
    On Langlands functoriality from classical groups to ${\rm GL}\sb n$. 
    Formes automorphes. I. Ast\'erisque  No. 298 (2005), 335--390. 

\bibitem[Spr]{spr}
	T. A. Springer.
	Linear algebraic groups. Second edition. 
	Progress in Mathematics, 9. Birkh\"auser Boston, Inc., Boston, MA, 1998. xiv+334 pp.  

\bibitem[Td]{tadic}
	Tadi\'c. 
	Classification of unitary representations in irreducible representations of general linear group 
	(non-Archimedean case).  	
	Ann. Sci. \'Ecole Norm. Sup. (4)  19  (1986),  no. 3, 335--382.

\bibitem[Tk]{takeda}
	S. Takeda. 
	The twisted symmetric square $L$-function for $\rm GL(r)$. 
	Preprint. Available at \texttt{arXiv:math.NT/1005.1979} .

\bibitem[T]{tate} 
	J. Tate. 
	Number theoretic background.
	In 
	Automorphic forms, representations and $L$-functions, Part 1 (Corvallis, OR, 1977), 
	Proc. Sympos. Pure Math. 33, 3--26. Amer. Math. Soc., Providence, R.I., 1979.

\bibitem[V]{vogan}
	D. Vogan. 
	Gelfand-{K}irillov dimension for {H}arish-{C}handra modules.
	Invent. Math., 48(1):75--98, 1978.
	    
\end{thebibliography}
\end{document}